\documentclass[final,leqno]{siamltex}
\usepackage{amsmath}
\usepackage{graphicx}
\usepackage{float}
 \usepackage[notcite,notref]{showkeys}
\usepackage{mathrsfs}
\usepackage{graphics}
\usepackage{float}
\usepackage{amsfonts,amssymb}
\usepackage{dsfont}
\usepackage{pifont}
\usepackage{wrapfig} 
\usepackage{hyperref}
\usepackage{multirow}
\usepackage{color}
\numberwithin{equation}{section}
\usepackage{threeparttable}
\def\3bar{{|\hspace{-.02in}|\hspace{-.02in}|}}
\def\E{{\mathcal{E}}}
\def\T{{\mathcal{T}}}

\def\pT{{\partial T}}

\def\W{{\mathcal{W}}}

\def\bw{{\mathbf{w}}}
\def\bu{{\mathbf{u}}}

\def\bv{{\mathbf{v}}}

\def\bn{{\mathbf{n}}}

\def\bx{{\mathbf{x}}}
\def\bq{{\mathbf{q}}}

\def\be{{\mathbf{e}}}
\def\bw{{\mathbf{w}}}
\def\bf{{\mathbf{f}}}
\def\bQ{{\mathbf{Q}}}
\def\bphi{{\boldsymbol{\phi}}}
\def\bzeta{{\boldsymbol{\zeta}}}

\def\bphi{{\boldsymbol{\phi}}}
\def\ljump{{[\![}}
\def\rjump{{]\!]}}

\newtheorem{remark}{Remark}[section]
\newtheorem{algorithm}{Stabilizer-Free Weak Galerkin Algorithm}[section]

\setlength{\parindent}{0.25in} \setlength{\parskip}{0.08in}

\def\ad#1{\begin{aligned}#1\end{aligned}}  \def\b#1{{\mathbf{#1}}}
\def\a#1{\begin{align*}#1\end{align*}} \def\an#1{\begin{align}#1\end{align}} 
 
\def\p#1{\begin{pmatrix}#1\end{pmatrix}}

\title{
Stabilizer-free  Weak Galerkin Methods   for  Quad-Curl Problems on polyhedral  Meshes without Convexity Assumptions 
}

\begin{document}
 \author{
 Chunmei Wang \thanks{Department of Mathematics, University of Florida, Gainesville, FL 32611, USA (chunmei.wang@ufl.edu). The research of Chunmei Wang was partially supported by National Science Foundation Grant DMS-2136380.}
  \and
 Shangyou Zhang\thanks{Department of Mathematical Sciences,  University of Delaware, Newark, DE 19716, USA (szhang@udel.edu).  }
 }

\maketitle
\begin{abstract} This paper introduces an efficient stabilizer-free weak Galerkin (WG) finite element method for solving the three-dimensional quad-curl problem. Leveraging bubble functions as a key analytical tool, the method extends the applicability of stabilizer-free WG approaches to non-convex elements in finite element partitions—a notable advancement over existing methods, which are restricted to convex elements. The proposed method maintains a simple, symmetric, and positive definite formulation. It achieves optimal error estimates for the exact solution in a discrete norm, as well as an optimal-order
$L^2$
  error estimate for 
$k>2$ and a sub-optimal order for the lowest order case 
$k=2$. Numerical experiments are presented to validate the method's efficiency and accuracy.

\end{abstract}

\begin{keywords} 
 weak Galerkin, stabilizer free, finite element methods, quad-curl problems, bubble functions, non-convex, polyhedral meshes.
\end{keywords}

\begin{AMS}
Primary, 65N30, 65N15, 65N12, 74N20; Secondary, 35B45, 35J50,
35J35
\end{AMS}

\pagestyle{myheadings}

\section{Introduction}
In this paper, we focus on developing a stabilizer-free weak Galerkin finite element method specifically tailored to address the quad-curl problem in three dimensions without convexity constraints. The objective is to find 
$\bu$
 such that, for a given $\bf$ defined on a bounded domain $\Omega\subset \mathbb R^3$,  
\begin{equation}\label{model} 
\begin{split}
 (\nabla \times)^4 \bu=&\bf, \qquad \text{in}\quad \Omega,\\
 \nabla\cdot\bu=&0, \qquad \text{in}\quad \Omega,\\
 \bu\times\bn=&0, \qquad \text{on}\quad \partial\Omega,\\
\nabla\times\bu\times\bn=&0, \qquad \text{on}\quad \partial\Omega.
 \end{split}
\end{equation}

Quad-curl problems arise in various scientific applications, including inverse electromagnetic scattering theory for nonhomogeneous media \cite{13} and magnetohydrodynamics equations \cite{55}. In recent years, significant progress has been made in developing finite element methods to address these problems. Conforming finite element spaces have been constructed for quad-curl problems in both two dimensions (e.g., \cite{28,51}) and three dimensions (e.g., \cite{27,39,52}). Nonconforming and low-order finite element spaces were proposed in \cite{30,55}, while mixed finite element methods were introduced in \cite{sun2,49,53}. The application of Hodge decomposition to quad-curl problems was investigated in \cite{10}, and a discontinuous Galerkin method was developed in \cite{25}. A novel weak Galerkin formulation, leveraging conforming spaces for curl-curl problems as nonconforming spaces for quad-curl problems, was proposed in \cite{sun}. Moreover, a posteriori error analysis for two-dimensional quad-curl problems was carried out in \cite{50}, and a virtual element method for two-dimensional quad-curl problems was introduced in \cite{54}. In addition, \cite{cao} presented a decoupled formulation for quad-curl problems, accompanied by both a priori and a posteriori error analyses.

The weak Galerkin (WG) finite element method represents a significant advancement in numerical techniques for solving partial differential equations (PDEs). This approach approximates differential operators within a framework analogous to the theory of distributions for piecewise polynomials. Unlike traditional finite element methods, WG relaxes the regularity requirements for approximating functions by incorporating carefully designed stabilizers. Extensive research has demonstrated the versatility of the WG method across a wide range of model PDEs, as evidenced by numerous studies \cite{wg1, wg2, wg3, wg4, wg5, wg6, wg7, wg8, wg9, wg10, wg11, wg12, wg13, wg14, wg15, wg16, wg17, wg18, wg19, wg20, wg21, itera, wy3655}, solidifying its position as a powerful tool in scientific computing.
What sets WG methods apart from other finite element approaches is their reliance on weak derivatives and weak continuities to design numerical schemes directly based on the weak forms of PDEs. This structural flexibility enables WG methods to handle a wide spectrum of PDEs effectively, ensuring both stability and accuracy in their solutions.

A notable innovation within the WG framework is the ``Primal-Dual Weak Galerkin (PDWG)'' method, which addresses challenges that traditional numerical techniques often struggle to overcome \cite{pdwg1, pdwg2, pdwg3, pdwg4, pdwg5, pdwg6, pdwg7, pdwg8, pdwg9, pdwg10, pdwg11, pdwg12, pdwg13, pdwg14, pdwg15}. PDWG formulates numerical solutions as constrained minimizations of functionals, where the constraints reflect the weak formulation of PDEs through weak derivatives. This approach leads to an Euler-Lagrange system involving both the primal variable and a dual variable (Lagrange multiplier), resulting in a symmetric and robust numerical scheme.
 
The WG methods for quad-curl problems discussed in the literature vary in their approaches and applicability. The method introduced in \cite{sun} is curl-conforming and specifically designed for tetrahedral partitions. In contrast, the WG method proposed in \cite{wg9} does not require curl-conformity, allowing it to be applied to arbitrary polyhedral partitions. This method has been shown to deliver accurate and reliable solutions for the quad-curl system, achieving optimal error estimates in discrete norms and $L^2$ error estimates, except for the lowest-order case ($k=2$). Furthermore, numerical experiments in \cite{wg9} have demonstrated notable superconvergence phenomena, further highlighting its effectiveness.

This paper presents a stabilizer-free WG finite element method for three-dimensional quad-curl problems. By eliminating the need for stabilizers, the proposed method streamlines implementation and applies to both convex and non-convex elements in finite element partitions, leveraging bubble functions as a key analytical tool. Previous stabilizer-free WG methods have been confined to convex polytopal meshes \cite{ye1, ye2}. While recent advancements have introduced stabilizer-free WG methods without convexity assumptions for other problems, including the Poisson equation, biharmonic equation, and elasticity problems \cite{wang1, wang2, wangelas, wangelas2}, this work is the first to extend such approaches to quad-curl problems without requiring convexity assumptions.

The proposed method maintains the size and global sparsity of the stiffness matrix, reducing programming complexity compared to stabilizer-dependent methods. Moreover, it leverages bubble functions without imposing the restrictive conditions required by other stabilizer-free WG approaches \cite{ye1, ye2}, allowing for broader applicability to various PDEs without additional implementation challenges.
Theoretical analysis confirms optimal error estimates for WG approximations in discrete norms. Additionally, the method achieves an optimal-order $L^2$ error estimate for $k > 2$ and a suboptimal-order $L^2$ error estimate for the lowest-order case $k = 2$. Numerical experiments validate the theoretical findings, showcasing the efficiency and practical utility of the proposed method.

The paper is organized as follows: Section 2 derives a weak formulation for the quad-curl system \eqref{model}, followed by Section 3, which reviews the discrete weak gradient and discrete weak curl-curl operators. Section 4 presents the stabilizer-free weak Galerkin  algorithm for the quad-curl problem. In Section 5, the existence and uniqueness of the solution for the WG scheme are derived, while Section 6 focuses on deriving the error equations for the WG scheme. Section 7 establishes optimal order error estimates in discrete norms for the WG approximation. Section 8 provides the 
$L^2$
  error estimate for the WG solution, which is established in an optimal order except for the lowest order 
$k=2$ under certain regularity assumptions. Finally, Section 9 demonstrates the numerical performance of the WG algorithm through test examples.

We follow the standard notations for Sobolev spaces and norms defined on a given open and bounded domain  $D\subset \mathbb{R}^3$ with Lipschitz continuous boundary.  Let $\|\cdot\|_{s,D}$, $|\cdot|_{s,D}$ and $(\cdot,\cdot)_{s,D}$ denote the norm, seminorm, and inner product in the Sobolev space $H^s(D)$ for any $s\ge 0$. The space $H^0(D)$ coincides with $L^2(D)$, the space of square-integrable functions, for which the norm and inner product are denoted by  $\|\cdot \|_{D}$ and $(\cdot,\cdot)_{D}$	
 , respectively. When $D=\Omega$ or when the domain of integration is clear from the context, we shall omit the subscript $D$ in the norm and the inner product notation. The symbol 
  $C$ denotes a generic constant independent of the meshsize and other physical or functional parameters.

\section{A Weak Formulation}\label{Section:2}
Let $s>0$ be an integer. We first introduce the space:
$$
H(curl^s; \Omega)=\{\bu\in [L^2(\Omega)]^3: (\nabla\times)^j\bu\in [L^2(\Omega)]^3, j=1, \cdots, s\} 
$$
with the associated inner product 
  $$
 (\bu, \bv)_{H(curl^s; \Omega)}=(\bu, \bv)+\sum_{j=1}^s ((\nabla\times)^j\bu,(\nabla\times)^j\bv)
  $$
  and the norm $$
 \|\bu\|_{H(curl^s; \Omega)}= (\bu, \bu)^{\frac{1}{2}}_{H(curl^s; \Omega)}.$$  
 Next, we introduce the following spaces:
$$
H_0(curl; \Omega):=\{\bu\in H(curl; \Omega): \bn\times \bu=0\ \text{on}\  \partial\Omega\},
$$
$$
H_0(curl^2; \Omega):=\{\bu\in H(curl^2; \Omega): \bn\times \bu=0\ \text{and}\ \nabla\times\bu\times \bn=0\ \text{on}\ \partial \Omega\}.
$$
Additionally, we introduce:
$$
H(div; \Omega)=\{\bu\in [L^2(\Omega)]^3: \nabla\cdot\bu \in L^2(\Omega)\},
$$
with the associated inner product 
$$(\bu, \bv)_{H(div; \Omega)}=(\bu, \bv)+(\nabla\cdot\bu, \nabla\cdot\bv)$$
and the norm 
$$
\|\bu\|_{H(div; \Omega)}= (\bu, \bu)^{\frac{1}{2}}_{H(div; \Omega)}. 
$$
Finally, we introduce:
$$
H(div^0; \Omega)=\{\bu\in H(div;\Omega): \nabla\cdot\bu=0\ 
 \text{in} \ \Omega\}.
$$

By utilizing the integration by parts, we propose the following weak formulation of the quad-curl problem \eqref{model}:  Given $\bf\in H(div^0; \Omega)$, seek  $(\bu; p)\in H_0(curl^2; \Omega)\times H_0^1(\Omega)$ such that
\begin{equation}\label{weakform}
\begin{split}
((\nabla\times)^2\bu, (\nabla\times)^2\bv)+(\bv, \nabla p)=&(\bf, \bv), \qquad \bv \in H_0(curl^2; \Omega),\\
\alpha(\nabla p, \nabla q)-(\bu, \nabla q)=&0, \qquad \qquad \forall q\in H_0^1(\Omega),
\end{split}
\end{equation}
where $\alpha$ is  a function to be specified later.  

\begin{theorem} \cite{sun}
Given $\bf\in H(div^0; \Omega)$, the problem \eqref{weakform} has a unique solution $(\bu; p)\in H_0(curl^2; \Omega)\times H_0^1(\Omega)$. Furthermore, $p=0$ and $\bu$ satisfies
$$
\|\bu\|_{H(curl^2; \Omega)}\leq C\|\bf\|.
$$

\end{theorem}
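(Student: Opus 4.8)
The plan is to treat \eqref{weakform} as a mixed (saddle-point) problem and to establish well-posedness by combining a direct argument that pins down the pressure $p$ with the Lax--Milgram lemma on the divergence-free subspace. The starting point is the observation that, for any $p\in H_0^1(\Omega)$, the gradient $\nabla p$ is an admissible test function, i.e. $\nabla p\in H_0(curl^2;\Omega)$: indeed $(\nabla\times)^j\nabla p=0$ for all $j\ge 1$, while $p|_{\partial\Omega}=0$ forces the tangential gradient to vanish, so that $\bn\times\nabla p=0$ and $\nabla\times\nabla p\times\bn=0$ on $\partial\Omega$.

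First I would show $p=0$. Taking $\bv=\nabla p$ in the first equation of \eqref{weakform} and using $(\nabla\times)^2\nabla p=0$ yields $\|\nabla p\|^2=(\bf,\nabla p)$. Since $\bf\in H(div^0;\Omega)$ and $p\in H_0^1(\Omega)$, integration by parts gives $(\bf,\nabla p)=-(\nabla\cdot\bf,p)=0$, whence $\nabla p=0$ and therefore $p=0$; note that this conclusion is independent of the function $\alpha$. With $p=0$, the first equation reduces to $((\nabla\times)^2\bu,(\nabla\times)^2\bv)=(\bf,\bv)$ for all $\bv\in H_0(curl^2;\Omega)$, while the second equation imposes the constraint $(\bu,\nabla q)=0$ for all $q\in H_0^1(\Omega)$. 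Testing against $q\in C_0^\infty(\Omega)$ shows this is equivalent to $\nabla\cdot\bu=0$, so $\bu$ lies in the constrained space $V:=\{\bv\in H_0(curl^2;\Omega):(\bv,\nabla q)=0\ \forall q\in H_0^1(\Omega)\}=H_0(curl^2;\Omega)\cap H(div^0;\Omega)$.

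Next I would verify coercivity of $a(\bu,\bv):=((\nabla\times)^2\bu,(\nabla\times)^2\bv)$ on $V$. This rests on the Friedrichs inequality $\|\bw\|\le C\|\nabla\times\bw\|$, valid for $\bw\in H_0(curl;\Omega)\cap H(div^0;\Omega)$, applied twice: once to $\bu$ itself, and once to $\bw=\nabla\times\bu$, which again belongs to $H_0(curl;\Omega)\cap H(div^0;\Omega)$ since $\nabla\cdot(\nabla\times\bu)=0$ and $(\nabla\times\bu)\times\bn=0$ on $\partial\Omega$. Chaining the two estimates gives $\|\bu\|\le C\|\nabla\times\bu\|\le C\|(\nabla\times)^2\bu\|$, and hence $a(\bu,\bu)\ge c\,\|\bu\|_{H(curl^2;\Omega)}^2$ on $V$. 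By Lax--Milgram there is then a unique $\bu\in V$ with $a(\bu,\bv)=(\bf,\bv)$ for all $\bv\in V$, satisfying $\|\bu\|_{H(curl^2;\Omega)}\le C\|\bf\|$.

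Finally, to recover a solution of the full system I would use the $L^2$-orthogonal splitting $H_0(curl^2;\Omega)=V\oplus\nabla H_0^1(\Omega)$, obtained by solving an auxiliary Poisson problem: for gradient test functions $\bv=\nabla\phi$ both sides of the first equation vanish, because $(\nabla\times)^2\nabla\phi=0$ and $(\bf,\nabla\phi)=0$, so the identity $a(\bu,\bv)=(\bf,\bv)$ extends from $V$ to all of $H_0(curl^2;\Omega)$; together with $p=0$ and $\bu\in V$ this shows that $(\bu;0)$ solves \eqref{weakform}, and uniqueness follows from the homogeneous case by the same coercivity argument. I expect the main obstacle to be the functional-analytic input rather than the bookkeeping: the Friedrichs/Poincar\'e inequality for $H_0(curl;\Omega)\cap H(div^0;\Omega)$ is where the topology of $\Omega$ enters (e.g. through the compact-embedding results of Weck--Weber--Picard type for the de Rham complex), and once it is in hand the remaining steps are routine.
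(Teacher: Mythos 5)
The paper offers no proof of this theorem---it is imported verbatim from \cite{sun}---so the comparison is against that reference rather than against anything in the present text. Your argument is correct and is essentially the standard one used there: gradients of $H_0^1(\Omega)$ functions are admissible test functions annihilated by $(\nabla\times)^2$, which forces $p=0$ once $(\bf,\nabla p)=0$ is extracted from $\nabla\cdot\bf=0$ and $p|_{\partial\Omega}=0$; the problem then reduces to a coercive variational problem on $V=H_0(curl^2;\Omega)\cap H(div^0;\Omega)$ via the twice-applied Friedrichs inequality (once to $\bu$ and once to $\nabla\times\bu$), and the $L^2$-orthogonal splitting $H_0(curl^2;\Omega)=V\oplus\nabla H_0^1(\Omega)$ recovers the full system. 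The one genuine hypothesis hiding in the statement is the validity of $\|\bw\|\le C\|\nabla\times\bw\|$ on $H_0(curl;\Omega)\cap H(div^0;\Omega)$, which fails when $\Omega$ carries nontrivial divergence-free, curl-free fields with vanishing tangential trace; you flag this correctly, and it is covered by the standing assumption (implicit here, explicit in \cite{sun}) that $\Omega$ is a simply connected Lipschitz polyhedron with connected boundary.
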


\section{Discrete Weak Differential Operators}\label{Section:Hessian}
The principal differential operators in the weak formulation (\ref{weakform}) for the quad-curl problem (\ref{model}) are the gradient operator $\nabla$ and the curl-curl operator $(\nabla \times)^2$. In this section, we will briefly review the discrete weak gradient operator as described in \cite{pdwg9, wg9} and the discrete weak curl-curl operator as discussed in \cite{wg9}.

Let $T$ be a polyhedral domain with boundary $\partial T$. A scalar-valued weak function on $T$ is denoted by $\sigma=\{\sigma_0,\sigma_b\}$, where 
 $\sigma_0\in L^2(T)$ and $\sigma_b\in L^{2}(\partial T)$. Here, $\sigma_0$ and $\sigma_b$  represent the values of $\sigma$ in the interior and on the boundary of $T$, respectively. Note that $\sigma_b$ is not necessarily the trace of $\sigma_0$ on $\partial T$. Denote by $\W(T)$ the space of scalar-valued weak functions on $T$, defined as:
\begin{equation*}\label{2.1}
\W(T)=\{\sigma=\{\sigma_0,\sigma_b\}: \sigma_0\in L^2(T), \sigma_b\in
L^{2}(\partial T)\}.
\end{equation*}

A vector-valued weak function on $T$ refers to a triplet $\bv=\{\bv_0,\bv_b, \bv_n\}$,  where  $\bv_0\in [L^2(T)]^3$ and $\bv_b\in [L^2(\partial T)]^3$ and $\bv_n\in [L^2(\partial  T)]^3$. Here $\bv_0$ and $\bv_b$  represent the values of $\bv$ in the interior and on the boundary of $T$, while $\bv_n$ represents the value of $\nabla\times\bv$ on $\partial T$. Note that $\bv_b$ and $\bv_n$ are not necessarily the traces of $\bv_0$ and $\nabla \times \bv_0$ on $\partial T$ respectively. 

Denote by $V(T)$ the space of vector-valued weak functions on $T$:
\begin{equation}\label{V} \ad {
V(T)=\{\bv=\{\bv_0, & 
  \bv_b=v_1\b t_1+v_2\b t_2, \bv_n=v_3\b t_1+v_4\b t_2\}: \\ 
 & \bv_0\in [L^2(T)]^3, \; v_i \in L^{2}(e), \; i=1,\dots 4 \},
  }
\end{equation} where $\b t_1$ and $\b t_2$ are two unit tangent vectors
  on a face polygon $e$ of $T$.
 
The weak gradient of $\sigma\in \W(T)$, denoted by $\nabla_w \sigma$, is defined as a linear functional on $[H^1(T)]^3$ such that
\begin{equation*}
(\nabla_w  \sigma,\boldsymbol{\psi})_T=-(\sigma_0,\nabla \cdot \boldsymbol{\psi})_T+\langle \sigma_b,\boldsymbol{\psi}\cdot \textbf{n}\rangle_{\partial T},
\end{equation*}
for all $\boldsymbol{\psi}\in [H^1(T)]^3$.

The weak curl-curl operator of  $\bv\in V(T)$, denoted by $(\nabla\times)^2_{w}\bv$, is defined in the dual space of $H(curl^2; T)$. Its action on $\bq\in H(curl^2; T)$ is given by 
$$
((\nabla \times)^2_{w} \bv, \bq)_T=(\bv_0, (\nabla\times)^2 \bq)_T-\langle \bv_b\times\bn, \nabla\times\bq\rangle_{\partial T}-\langle \bv_n\times\bn, \bq\rangle_{\partial T}.
$$

Let 
 $P_r(T)$ denote the space of polynomials on $T$ with degree no more than $r$. 

A discrete version of $\nabla_{w}\sigma$  for $\sigma\in \W(T)$, denoted by $\nabla_{w, r, T}\sigma$, is defined as a unique polynomial vector in $[P_r(T) ]^3$ satisfying
\begin{equation}\label{disgradient}
(\nabla_{w, r, T} \sigma, \boldsymbol{\psi})_T=-(\sigma_0, \nabla \cdot \boldsymbol{\psi})_T+\langle \sigma_b, \boldsymbol{\psi} \cdot \textbf{n}\rangle_{\partial T}, \quad\forall\boldsymbol{\psi}\in [P_r(T)]^3,
\end{equation}
 which, from the usual integration by parts, gives
 \begin{equation}\label{disgradient*}
 (\nabla_{w, r, T} \sigma, \boldsymbol{\psi})_T= (\nabla \sigma_0, \boldsymbol{\psi})_T-\langle \sigma_0- \sigma_b, \boldsymbol{\psi} \cdot \textbf{n}\rangle_{\partial T}, \quad\forall\boldsymbol{\psi}\in [P_r(T)]^3,
 \end{equation}
 provided that $\sigma_0\in H^1(T)$.

A discrete version of $(\nabla\times)^2_{w}\bv$ for $\bv\in V(T)$, denoted by $(\nabla\times)^2_{w, r, T}\bv$, is defined as a unique polynomial vector in $[P_r(T) ]^3$ satisfying
\begin{equation}\label{discurlcurl}
((\nabla \times)^2_{w, r, T} \bv, \bq)_T=(\bv_0, (\nabla\times)^2 \bq)_T-\langle \bv_b\times\bn, \nabla\times\bq\rangle_{\partial T}-\langle \bv_n\times\bn, \bq\rangle_{\partial T}, 
\end{equation}
 for any $\bq \in [P_r(T)]^3$, which, from the usual integration by parts, yields
\begin{equation}\label{discurlcurlnew}
\begin{split}
&((\nabla \times)^2_{w, r, T} \bv, \bq)_T\\=&((\nabla\times)^2 \bv_0,   \bq)_T-\langle (\bv_b-\bv_0)\times\bn, \nabla\times\bq\rangle_{\partial T}-\langle (\bv_n-\nabla\times\bv_0)\times\bn, \bq\rangle_{\partial T}, 
\end{split} 
\end{equation}
 for any $\bq \in [P_r(T)]^3$, provided that $\bv_0\in H(curl^2; T)$.

\section{Stabilizer-Free Weak Galerkin Algorithm}\label{Section:WGFEM}
Let ${\cal T}_h$ be a finite element partition of the domain $\Omega\subset\mathbb R^3$, consisting of  shape-regular  polyhedra  \cite{wy3655}. Denote by ${\mathcal E}_h$ the set of all faces in ${\cal T}_h$, and by ${\mathcal E}_h^0={\mathcal E}_h \setminus
\partial\Omega$ the set of all interior faces. Let $h_T$ denote  the meshsize of $T\in {\cal T}_h$ and $h=\max_{T\in {\cal T}_h}h_T$ the meshsize for the partition ${\cal T}_h$.

For any given integer { $k\geq 2$}, denote by
$W_k(T)$ the local discrete space of the scalar-valued weak functions given by
$$
W_k(T)=\{\{\sigma_0,\sigma_b\}:\sigma_0\in P_k(T),\sigma_b\in
P_k(e),e\subset \partial T\}.
$$

  Denote by
$V_k(T)$ the local discrete space of the vector-valued weak functions given by
\an{\label{Vk}\ad{ 
V_k(T)=\{ \bv\in \b V(T):\bv_0\in [P_k(T)]^3,  v_1, v_2\in
P_k(e),  v_3, v_4\in
P_{k-1}(e) \} } } where $V(T)$ is defined in 
 \eqref{V}.

Patching $W_k(T)$ over all the elements $T\in {\cal T}_h$
through a common value $\sigma_b$ on the interior interface $\E_h^0$, we form the following scalar-valued 
weak finite element space, denoted by $W_h$:
$$
W_h=\big\{\{\sigma_0, \sigma_b\}:\{\sigma_0, \sigma_b\}|_T\in W_k(T), \forall T\in {\cal T}_h \big\},
$$
and the subspace of $W_h$ with vanishing boundary values on $\partial\Omega$, denoted by $W_h^0$: 
\begin{equation*}\label{W0}
W_h^0=\{\{\sigma_0, \sigma_b\}\in W_h: \sigma_b=0\ \text{on}\ \partial \Omega\}.
\end{equation*}
  Similarly,  patching $V_k(T)$ over all the elements $T\in {\cal T}_h$
through a common value $\bv_b$ on the interior interface $\E_h^0$, we form the following vector-valued weak finite element space, denoted by $V_h$:
$$
V_h=\big\{\{\bv_0,\bv_b, \bv_n\}:\{\bv_0,\bv_b, \bv_n\}|_T\in V_k(T), \forall T\in {\cal T}_h \big\},
$$
and the subspace of $V_h$ with vanishing boundary values on $\partial\Omega$, denoted by $V_h^0$:
\begin{equation*}\label{V0}
V_h^0=\big\{\{\bv_0,\bv_b, \bv_n\}\in V_h: \bv_b\times\bn=0\  \text{and} \ \bv_n\times\bn=0\ \ \text{on}\ \partial\Omega\big\}.
\end{equation*}

Let $r_1$ and $r_2$ be two integers.
For simplicity of notation and without confusion, for any $\sigma\in
W_h$ and $\bv\in V_h$, denote by $\nabla_{w}\sigma$ and $(\nabla \times) ^2_{w} \bv$ the discrete weak actions   $\nabla_{w, r_2, T}\sigma$ and  $(\nabla \times) ^2_{w, r_1, T} \bv$ computed by using   (\ref{disgradient}) and \eqref{discurlcurl} on each element $T$; i.e.,
$$
(\nabla_{w}\sigma)|_T= \nabla_{w, r_2, T}(\sigma|_T), \qquad \sigma\in W_h,
$$
 $$
({\nabla\times}^2)_{w} \bv|_T=({\nabla\times}^2)_{w, r_1, T}(\bv|_T), \qquad \bv\in V_h.
$$

For any $p, q\in W_h$ and $\bu, \bv\in V_h$, we introduce the
following bilinear forms
\begin{eqnarray*}\label{EQ:local-stabilizer}
a(\bu, \bv)=&\sum_{T\in {\cal T}_h}a_T(\bu, \bv),\\
b(\bu, q)=&\sum_{T\in {\cal T}_h}b_T(\bu, q), \\
c(p, q)=&\sum_{T\in {\cal T}_h}c_T(p, q), 
\label{EQ:local-bterm}
\end{eqnarray*} 
where
\begin{equation*}
\begin{split}
a_T(\bu, \bv) =& ((\nabla\times)_w^2 \bu, (\nabla\times)_w^2 \bv)_T,\\
b_T(\bu,q)=&(\bu_0, \nabla_w q)_T,\\
c_T (p, q)=& h_T^4(\nabla_w p, \nabla_w q)_T.
 \end{split}
\end{equation*} 
 
The following stabilizer-free weak Galerkin scheme for the quad-curl problem (\ref{model}) is based on the variational formulation (\ref{weakform}):
\begin{algorithm}\label{a-1}
Given $\bf \in H(div^0; \Omega)$, find $(\bu_h; p_h)\in V_h^0 \times W_{h}^0$, such that
\begin{eqnarray}\label{32}
 a(\bu_h, \bv_h)+b(\bv_h, p_h)&=& (\bf, \bv_0), \qquad \forall \bv_h\in V_{h}^0,\\
c(p_h, q_h) -b(\bu_h, q_h)&=&0,\qquad \quad \qquad \forall q_h\in W_h^0.\label{2}
\end{eqnarray}
\end{algorithm}

\section{Solution Existence and Uniqueness}

Recall that $\T_h$ is a shape-regular finite element partition of
the domain $\Omega$. For any $T\in\T_h$ and $\varphi\in H^{1}(T)$, the following trace inequality holds true \cite{wy3655}:
\begin{equation}\label{trace-inequality}
\|\varphi\|_{\pT}^2 \leq C
(h_T^{-1}\|\varphi\|_{T}^2+h_T\| \varphi\|_{1, T}^2).
\end{equation}
Furthermore, if $\varphi$ is a polynomial on $T$, the standard inverse inequality yields: 
\begin{equation}\label{trace}
\|\varphi\|_{\pT}^2 \leq Ch_T^{-1}\|\varphi\|_{T}^2.
\end{equation}

 For any $\sigma=\{\sigma_0, \sigma_b\}\in W_h$, we define
the   energy norm as follows: \begin{equation}\label{3norma}
\3bar \sigma\3bar_{W_h}=\Big( \sum_{T\in {\cal T}_h} h_T^4(\nabla_w \sigma, \nabla_w \sigma)_T\Big)^{\frac{1}{2}},
\end{equation}
and the following discrete $H^1$ semi-norm: 
\begin{equation}\label{disnorma}
\|\sigma\|_{1, h}=\Big( \sum_{T\in {\cal T}_h}h_T^4 \|\nabla \sigma_0\|_T^2+h_T^{3}\|\sigma_0-\sigma_b\|_{\partial T}^2\Big)^{\frac{1}{2}}.
\end{equation}
\begin{lemma}\cite{wang1}\label{norm1a}
 For $\sigma=\{\sigma_0, \sigma_b\}\in W_h$, there exists a constant $C$ such that
 $$
 \|\nabla \sigma_0\|_T\leq C\|\nabla_w \sigma\|_T.
 $$
\end{lemma}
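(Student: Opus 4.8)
The plan is to test the discrete weak gradient against a cleverly chosen polynomial that isolates $\nabla\sigma_0$ in the interior while annihilating the boundary contribution in the integration-by-parts identity \eqref{disgradient*}. Since $\sigma_0\in P_k(T)$, its gradient $\nabla\sigma_0$ lies in $[P_{k-1}(T)]^3$, so the natural test direction is $\nabla\sigma_0$ itself; the difficulty is that $\nabla\sigma_0$ does not vanish on $\partial T$, and the boundary term $\langle \sigma_0-\sigma_b, \bpsi\cdot\bn\rangle_{\partial T}$ in \eqref{disgradient*} cannot be controlled by $\|\nabla_w\sigma\|_T$ alone. To remove it, I would introduce a bubble function $b$ on $T$, that is, a nonnegative polynomial with $b|_{\partial T}=0$ and $0\le b\le 1$, and set $\bpsi=b\,\nabla\sigma_0$. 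Choosing $r_2$ large enough that $\bpsi\in [P_{r_2}(T)]^3$, the factor $b$ guarantees $\bpsi\cdot\bn=0$ on $\partial T$, so \eqref{disgradient*} collapses to
$$
(\nabla_w\sigma, b\,\nabla\sigma_0)_T = (\nabla\sigma_0, b\,\nabla\sigma_0)_T = \int_T b\,|\nabla\sigma_0|^2\,dx.
$$

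Next I would invoke a bubble-function norm equivalence for polynomials on $T$, namely $\|\bphi\|_T^2 \le C\int_T b\,|\bphi|^2\,dx$ for all $\bphi\in [P_{k-1}(T)]^3$, to bound the right-hand side below by $C^{-1}\|\nabla\sigma_0\|_T^2$. For the left-hand side, Cauchy--Schwarz together with $0\le b\le 1$ gives
$$
(\nabla_w\sigma, b\,\nabla\sigma_0)_T \le \|\nabla_w\sigma\|_T\,\|b\,\nabla\sigma_0\|_T \le \|\nabla_w\sigma\|_T\,\|\nabla\sigma_0\|_T.
$$
Combining the two displays yields $\|\nabla\sigma_0\|_T^2 \le C\|\nabla_w\sigma\|_T\,\|\nabla\sigma_0\|_T$, and dividing through by $\|\nabla\sigma_0\|_T$ (the inequality being trivial when $\nabla\sigma_0=0$) delivers the claim.

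The routine parts — the integration by parts, Cauchy--Schwarz, and the degree bookkeeping that fixes $r_2$ — are straightforward. The crux, and the step I expect to be the main obstacle, is the bubble-function norm equivalence $\|\bphi\|_T^2 \le C\int_T b\,|\bphi|^2\,dx$ on a general, possibly non-convex, polyhedral element $T$. On a convex element one can take the standard product-of-barycentric bubble, but on a non-convex polyhedron such a single polynomial bubble need not dominate the $L^2$ norm of every polynomial of the given degree with an $h$-uniform constant. Establishing this equivalence — presumably via the explicit bubble construction of \cite{wang1} together with a scaling and shape-regularity argument that keeps $C$ independent of $h_T$ — is precisely what allows the stabilizer-free analysis to proceed without a convexity hypothesis, and it is the ingredient I would need to secure first.
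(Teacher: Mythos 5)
Your proposal is correct and follows essentially the same route as the paper: the paper also tests \eqref{disgradient*} with $\bpsi=\Phi_B\nabla\sigma_0$ for an element bubble $\Phi_B=l_1^2\cdots l_N^2$ (squares of the face linear forms, which keeps $\Phi_B\ge 0$ on non-convex $T$), takes $r_2=2N+k-1$, and invokes the same bubble-weighted norm equivalence (the ``domain inverse inequality'' of \cite{wy3655}) before concluding with Cauchy--Schwarz. The crux you flag is exactly the ingredient the paper relies on, supplied by the explicit squared-linear-form construction.
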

\begin{proof} 
 Let  $T\in {\cal T}_h$ be a polyhedral element with $N$ faces denoted by $e_1, \cdots, e_N$. It is important to emphasis that the polyhedral element $T$  can be non-convex. On each  face $f_i$, we construct   a linear equation  $l_i(x)$ such that  $l_i(x)=0$ on  face $f_i$ as follows: 
$$l_i(x)=\frac{1}{h_T}\overrightarrow{AX}\cdot \bn_i, $$  where  $A=(A_1,  A_2)$ is a given point on   face $f_i$,  $X=(x_1, x_2)$ is any point on face $f_i$, $\bn_i$ is the normal direction to   face $f_i$, and $h_T$ is the size of the element $T$. 

The bubble function of  the element  $T$ can be  defined as 
 $$
 \Phi_B =l^2_1(x)l^2_2(x)\cdots l^2_N(x) \in P_{2N}(T).
 $$ 
 It is straightforward to verify that  $\Phi_B=0$ on the boundary $\partial T$.    The function 
  $\Phi_B$  can be scaled such that $\Phi_B(M)=1$ where   $M$  represents the barycenter of the element $T$. Additionally,  there exists a sub-domain $\hat{T}\subset T$ such that $\Phi_B\geq \rho_0$ for some constant $\rho_0>0$.

For $\sigma=\{\sigma_0, \sigma_b\}\in W_h$, letting $r_2=2N+k-1$  and $\boldsymbol{\psi}=\Phi_B \nabla \sigma_0 \in [P_{r_2}(T)]^3$ in \eqref{disgradient*} yields 
\begin{equation}\label{t1a}
\begin{split}
(\nabla_w \sigma, \Phi_B \nabla \sigma_0)_T&=(\nabla \sigma_0, \Phi_B \nabla \sigma_0)_T+\langle \sigma_b-\sigma_0,  \Phi_B \nabla \sigma_0 \cdot \bn\rangle_{\partial T}\\&=(\nabla \sigma_0, \Phi_B \nabla \sigma_0)_T,
\end{split}
\end{equation}
where we used $\Phi_B=0$ on $\partial T$.

From the domain inverse inequality \cite{wy3655},  there exists a constant $C$ such that 
\begin{equation}\label{t2a}
(\nabla \sigma_0, \Phi_B \nabla \sigma_0)_T \geq C (\nabla \sigma_0, \nabla \sigma_0)_T.
\end{equation} 
By applying the Cauchy-Schwarz inequality and using \eqref{t1a}-\eqref{t2a}, we get
 $$
 (\nabla \sigma_0, \nabla \sigma_0)_T\leq C (\nabla_w \sigma, \Phi_B \nabla \sigma_0)_T  \leq C  \|\nabla_w \sigma\|_T \|\Phi_B \nabla \sigma_0\|_T  \leq C
\|\nabla_w \sigma\|_T \|\nabla \sigma_0\|_T,
 $$
which simplifies to
 $$
 \|\nabla \sigma_0\|_T\leq C\|\nabla_w \sigma\|_T.
 $$

This completes the proof of the lemma.
\end{proof}

\begin{remark}
   If the polyhedral element $T$  is convex, 
   the bubble function of  the element  $T$ in Lemma \ref{norm1a}  can be  simplified to
 $$
 \Phi_B =l_1(x)l_2(x)\cdots l_N(x).
 $$ 
It can be verified that there exists a sub-domain $\hat{T}\subset T$,  such that
 $ \Phi_B\geq\rho_0$  for some constant $\rho_0>0$,  and $\Phi_B=0$ on the boundary $\partial T$.   Lemma \ref{norm1a}   can be proved in the same manner using this simplified construction. In this case, we take $r_2=N+k-1$.  
\end{remark}

By constructing a face-based bubble function   $$\varphi_{f_k}= \Pi_{i=1, \cdots, N, i\neq k}l_i^2(x),$$   it can be easily verified that (1) $\varphi_{f_k}=0$ on each  face  $f_i$ for $i \neq k$, (2) there exists a sub-domain $\widehat{f_k}\subset f_k$ such that  $\varphi_{f_k} \geq \rho_1$ for some constant $\rho_1>0$.

\begin{lemma}\label{phi1}
   For $\{\sigma_0, \sigma_b\}\in W_h$, let $\boldsymbol{\psi}=(\sigma_b-\sigma_0)  \varphi_{f_k}\bn$, where $\bn$ is the unit outward normal direction to the face  $f_k$. The following inequality holds:
\begin{equation}
  \|\boldsymbol{\psi}\|_T ^2 \leq Ch_T \int_{f_k}(\sigma_b-\sigma_0)^2ds.
\end{equation}
\end{lemma}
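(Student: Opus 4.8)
The plan is to reduce the claimed bound to an equivalence of norms on a finite-dimensional space of polynomials, made quantitative by a scaling argument. First I would unwind the definition of $\boldsymbol{\psi}$: since $\bn$ is a unit vector, $\|\boldsymbol{\psi}\|_T^2 = \int_T (\sigma_b - \sigma_0)^2 \varphi_{f_k}^2 \, dx$. Here it is crucial to read $\sigma_b - \sigma_0$ as a single polynomial attached to the face $f_k$, namely the difference of the boundary datum $\sigma_b\in P_k(f_k)$ and the trace $\sigma_0|_{f_k}$, extended into $T$ as a polynomial that is constant along the normal direction $\bn$. With this reading the face integral $\int_{f_k}(\sigma_b - \sigma_0)^2\, ds$ on the right is a genuine norm of that face polynomial, which is exactly what makes the inequality true.

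Next I would bound the bubble factor crudely from above. Each linear function $l_i(x) = h_T^{-1}\overrightarrow{AX}\cdot \bn_i$ satisfies $|l_i|\le C$ on $T$, because $|\overrightarrow{AX}|\le \operatorname{diam}(T)\le Ch_T$ and $\bn_i$ is a unit vector; hence $\varphi_{f_k}=\prod_{i\ne k}l_i^2\le C$ on $T$. This gives $\|\boldsymbol{\psi}\|_T^2 \le C\int_T (\sigma_b - \sigma_0)^2\, dx$, so it remains to prove $\int_T (\sigma_b - \sigma_0)^2\, dx \le C h_T \int_{f_k}(\sigma_b - \sigma_0)^2\, ds$. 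Note that for this upper bound the strict positivity $\varphi_{f_k}\ge \rho_1$ on $\widehat{f_k}$ is not needed; only boundedness of the bubble is used.

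For this last inequality I would pass to a reference configuration: map $T$ affinely onto a unit-size reference element $\hat T$ sending $f_k$ to a reference face $\hat f_k$, so that under shape-regularity of $\mathcal{T}_h$ the Jacobian factors are comparable to $h_T^3$ for volumes and $h_T^2$ for the face. Writing $\phi := (\sigma_b - \sigma_0)|_{f_k}$, letting $\hat\phi$ be its pullback and $\hat{\tilde\phi}$ the normal-constant extension, both $\hat\phi\mapsto\big(\int_{\hat f_k}\hat\phi^2\, d\hat s\big)^{1/2}$ and $\hat\phi\mapsto\big(\int_{\hat T}\hat{\tilde\phi}^2\, d\hat x\big)^{1/2}$ are norms on the finite-dimensional space $P_k(\hat f_k)$ — the latter because the extension vanishes identically on $\hat T$ only if $\hat\phi$ vanishes on $\hat f_k$. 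Equivalence of norms then gives $\int_{\hat T}\hat{\tilde\phi}^2\, d\hat x \le C\int_{\hat f_k}\hat\phi^2\, d\hat s$ with $C$ independent of $T$, and scaling back (volume carrying $h_T^3$, face carrying $h_T^2$) produces exactly the factor $h_T$ and yields the claim.

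The main obstacle I anticipate is not the algebra but making the constant $C$ genuinely uniform over the family of possibly non-convex polyhedra: the clean affine-pullback picture must be justified through the shape-regularity hypotheses of \cite{wy3655} (e.g. via a finite reference set or a covering argument), since a single reference element need not exist for an arbitrary polytope. A secondary point to handle carefully is the consistent interpretation of $\sigma_b - \sigma_0$ as a face quantity extended normally into $T$; treating it instead as a full interior polynomial whose trace is taken on $f_k$ would replace the right-hand norm by a mere seminorm and break the estimate.
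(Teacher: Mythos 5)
Your proposal is correct and follows essentially the same route as the paper: interpret $\sigma_b-\sigma_0$ in the volume integral as the face polynomial $(\sigma_b-\sigma_0|_{f_k})$ extended into $T$ constantly along the projection onto $f_k$, bound the bubble $\varphi_{f_k}$ by a constant, and control the volume integral by $h_T$ times the face integral. Your finite-dimensional norm-equivalence and scaling argument is simply a more explicit justification of the step the paper compresses into ``applied the properties of the projection,'' and your closing caveats (uniformity of the constant under shape regularity, and the fact that reading $\sigma_0$ as the full interior polynomial would break the estimate) correctly identify the points the paper leaves implicit.
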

\begin{proof}
 We first extend $\sigma_b$, initially defined on the two dimensional  face  $f_k$, to the entire three dimensional polyhedral element $T$  using  the following formula:
$$
 \sigma_b (X)= \sigma_b(Proj_{f_k} (X)),
$$
where $X=(x_1,x_2,x_3)$ is any point in the  element $T$, $Proj_{f_k} (X)$ denotes the orthogonal projection of the point $X$ onto the  face  $f_k$.

We claim that $\sigma_b$ remains  a polynomial defined on the element $T$ after the extension.  

Let the  face  $f_k$ be defined by two linearly independent vectors $\beta_1, \beta_2$ originating from a point $A$ on the  face  $f_k$. Any point $P$ on the  face  $f_k$ can be parametrized as
$$
P(t_1,   t_2)=A+t_1\beta_1+ t_2\beta_2,
$$
where $t_1,  t_2$ are parameters.

Note that $\sigma_b(P(t_1,   t_2))$ is a polynomial of degree $k$ defined on the  face  $f_k$. It can be expressed as:
$$
\sigma_b(P(t_1,  t_2))=\sum_{|\alpha|\leq k}c_{\alpha}\textbf{t}^{\alpha},
$$
where $\textbf{t}^{\alpha}=t_1^{\alpha_1} t_2^{\alpha_2}$ and $\alpha=(\alpha_1, \alpha_2)$.

For any point $X=(x_1, x_2, x_3)$ in the element $T$, the projection  $Proj_{f_k} (X)$ onto the  face  $f_k$ is the point on $f_k$ that minimizes the distance to $X$. Mathematically, this projection $Proj_{f_k} (X)$ is an affine transformation which can be expressed as 
$$
Proj_{f_k} (X)=A+\sum_{i=1}^{2} t_i(X)\beta_i,
$$
where $t_i(X)$ are the projection coefficients, and $A$ is the origin point on $f_k$. The coefficients $t_i(X)$ are determined  by solving the orthogonality condition:
$$
(X-Proj_{f_k} (X))\cdot \beta_j=0,  \qquad j=1, 2.
$$
This results in a system of linear equations in $t_1(X)$, $t_2(X)$, which  can be solved to yield:
$$
t_i(X)= \text{linear function of} \  X.
$$
Hence, the projection $Proj_{f_k} (X)$ is an affine linear function  of $X$.

We extend the polynomial $\sigma_b$ from the  face  $f_k$ to the entire element $T$ by defining
$$
\sigma_b(X)=\sigma_b(Proj_{f_k} (X))=\sum_{|\alpha|\leq k}c_{\alpha}\textbf{t}(X)^{\alpha},
$$
where $\textbf{t}(X)^{\alpha}=t_1(X)^{\alpha_1} t_{2}(X)^{\alpha_{2}}$. Since $t_i(X)$ are linear functions of $X$, each term $\textbf{t}(X)^{\alpha}$ is a polynomial in $X=(x_1, x_2, x_3)$.
Thus, $\sigma_b(X)$ is a polynomial in the  three dimensional coordinates $X=(x_1, x_2, x_3)$.

 Secondly, let $\sigma_{trace}$ denote the trace of $\sigma_0$ on the  face  $f_k$. We extend $\sigma_{trace}$   to the entire element $T$  using  the following formula:
$$
 \sigma_{trace} (X)= \sigma_{trace}(Proj_{f_k} (X)),
$$
where $X$ is any point in the element $T$, $Proj_{f_k} (X)$ denotes the projection of the point $X$ onto the  face  $f_k$. Similar to the case for $\sigma_b$, $\sigma_{trace}$ remains a polynomial after this extension. 

Let $\boldsymbol{\psi}=(\sigma_b-\sigma_0)  \varphi_{f_k}\bn$. We have
\begin{equation*}
    \begin{split}
\|\boldsymbol{\psi}\|^2_T  =
\int_T\boldsymbol{\psi}^2dT =  &\int_T ((\sigma_b-\sigma_0) (X) \varphi_{f_k}\bn)^2dT\\
\leq &Ch_T \int_{f_k} ((\sigma_b-\sigma_{trace}) (Proj_{f_k} (X)) \varphi_{f_k}\bn)^2ds\\
 \leq &Ch_T \int_{f_k} (\sigma_b-\sigma_0)^2ds,
    \end{split}
\end{equation*} 
where we used the facts that (1) $\varphi_{f_k}=0$ on each  face  $f_i$ for $i \neq k$, (2) there exists a sub-domain $\widehat{f_k}\subset f_k$ such that  $\varphi_{f_k} \geq \rho_1$ for some constant $\rho_1>0$,  and applied the properties of the projection.

 This completes the proof of the lemma.

\end{proof}

\begin{lemma} \cite{wang1}  \label{normmm} There exist two positive constants $C_1$ and $C_2$ such that for any $\sigma=\{\sigma_0, \sigma_b\} \in W_h$, the following inequality holds:
 \begin{equation}\label{normeqa}
 C_1\|\sigma\|_{1, h}\leq \3bar \sigma\3bar_{W_h}  \leq C_2\|\sigma\|_{1, h}.
\end{equation}
\end{lemma}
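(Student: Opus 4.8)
The goal is to establish the norm equivalence $C_1\|\sigma\|_{1,h}\leq \3bar\sigma\3bar_{W_h}\leq C_2\|\sigma\|_{1,h}$ for all $\sigma=\{\sigma_0,\sigma_b\}\in W_h$, where the two sides are defined in \eqref{3norma} and \eqref{disnorma}. The plan is to prove the two inequalities separately on each element $T$ and then sum over $T\in\T_h$. The natural tool throughout is the identity \eqref{disgradient*} for the discrete weak gradient, together with the two bubble-function lemmas already established: Lemma \ref{norm1a}, which controls $\|\nabla\sigma_0\|_T$ by $\|\nabla_w\sigma\|_T$, and Lemma \ref{phi1}, which provides the face-based test function needed to recover the jump term $\sigma_0-\sigma_b$.

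For the lower bound $C_1\|\sigma\|_{1,h}\leq\3bar\sigma\3bar_{W_h}$, I would first note that Lemma \ref{norm1a} immediately gives $h_T^4\|\nabla\sigma_0\|_T^2\leq Ch_T^4\|\nabla_w\sigma\|_T^2$, handling the interior part of $\|\sigma\|_{1,h}$. The more delicate piece is the boundary term $h_T^3\|\sigma_0-\sigma_b\|_{\partial T}^2$. Working face by face, I would insert the test function $\bpsi=(\sigma_b-\sigma_0)\varphi_{f_k}\bn$ from Lemma \ref{phi1} into \eqref{disgradient*}. The boundary pairing $\langle\sigma_b-\sigma_0,\bpsi\cdot\bn\rangle_{\partial T}$ then reduces, by the support property of $\varphi_{f_k}$, to an integral over $f_k$ of $(\sigma_b-\sigma_0)^2\varphi_{f_k}$, which is bounded below by $\rho_1\|\sigma_b-\sigma_0\|^2$ on the subregion $\widehat{f_k}$ and hence (via an equivalence of norms on the finite-dimensional polynomial space) by $C\|\sigma_b-\sigma_0\|_{f_k}^2$. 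Rearranging \eqref{disgradient*} isolates this face jump in terms of $(\nabla_w\sigma,\bpsi)_T$ and $(\nabla\sigma_0,\bpsi)_T$; applying Cauchy–Schwarz, the bound $\|\bpsi\|_T^2\leq Ch_T\|\sigma_b-\sigma_0\|_{f_k}^2$ from Lemma \ref{phi1}, and Lemma \ref{norm1a} to absorb the $\nabla\sigma_0$ contribution, I would obtain $\|\sigma_0-\sigma_b\|_{f_k}^2\leq Ch_T^{-1}\|\nabla_w\sigma\|_T^2$ (up to careful tracking of the $h_T$ powers). Summing over faces and matching the weight $h_T^3$ against $h_T^4$ in \eqref{3norma} yields the lower bound.

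For the upper bound $\3bar\sigma\3bar_{W_h}\leq C_2\|\sigma\|_{1,h}$, I would run the estimate in reverse: take an arbitrary $\bpsi\in[P_{r_2}(T)]^3$ in \eqref{disgradient*}, bound the right-hand side by Cauchy–Schwarz to get $\|\nabla\sigma_0\|_T\|\bpsi\|_T+\|\sigma_0-\sigma_b\|_{\partial T}\|\bpsi\cdot\bn\|_{\partial T}$, apply the polynomial trace inequality \eqref{trace} to replace $\|\bpsi\|_{\partial T}$ by $Ch_T^{-1/2}\|\bpsi\|_T$, and then take the supremum over $\bpsi$ to conclude $\|\nabla_w\sigma\|_T^2\leq C(\|\nabla\sigma_0\|_T^2+h_T^{-1}\|\sigma_0-\sigma_b\|_{\partial T}^2)$. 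Multiplying by $h_T^4$ reproduces exactly the two weighted terms in \eqref{disnorma}, giving the upper bound after summation.

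The main obstacle is the lower bound, specifically the bookkeeping of mesh-size powers when recovering the jump term: one must verify that the $h_T$ scaling emerging from Lemma \ref{phi1} (which contributes a factor $h_T$) combines correctly with the trace/inverse scalings so that the weight $h_T^3$ on the boundary term is matched by the weight $h_T^4$ on $\|\nabla_w\sigma\|_T^2$. I would pay close attention to the shape-regularity constants hidden in the norm equivalence on the polynomial faces and in the domain inverse inequality, and confirm that these depend only on the mesh regularity and the polynomial degree $k$, not on $h$. Since this lemma is cited from \cite{wang1} and the requisite bubble-function machinery is already in place, the argument should follow the template above without genuinely new ideas.
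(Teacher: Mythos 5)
Your proposal follows essentially the same route as the paper's proof: Lemma \ref{norm1a} for the interior term and the face-bubble test function $\bpsi=(\sigma_b-\sigma_0)\varphi_{f_k}\bn$ in \eqref{disgradient*} (combined with Lemma \ref{phi1} and Cauchy--Schwarz) for the jump term in the lower bound, then Cauchy--Schwarz with the trace inequality \eqref{trace} for the upper bound. The only slip is the exponent in your intermediate jump estimate: the chain of estimates you describe yields $\|\sigma_0-\sigma_b\|_{f_k}^2\leq Ch_T\|\nabla_w \sigma\|_T^2$ (equivalently $h_T^{-1}\|\sigma_0-\sigma_b\|_{f_k}^2\leq C\|\nabla_w \sigma\|_T^2$), not $Ch_T^{-1}\|\nabla_w\sigma\|_T^2$, and it is the former that correctly matches the weight $h_T^3$ in \eqref{disnorma} against $h_T^4$ in \eqref{3norma}.
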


\begin{proof}  
Recall that a face-based bubble function  is defined by $$\varphi_{f_k}= \Pi_{i=1, \cdots, N, i\neq k}l_i^2(\bx).$$ 
By choosing  $\boldsymbol{\psi}=(\sigma_b-\sigma_0)  \varphi_{f_k}\bn$ in \eqref{disgradient*}, we obtain:
\begin{equation}\label{t3a} 
  (\nabla_{w} \sigma, \boldsymbol{\psi})_T=(\nabla \sigma_0, \boldsymbol{\psi})_T+
  \langle \sigma_b-\sigma_0,  \boldsymbol{\psi}\cdot \bn \rangle_{\partial T}=(\nabla \sigma_0, \boldsymbol{\psi})_T+ \int_{f_k}|\sigma_b-\sigma_0|^2 \varphi_{f_k}ds,
  \end{equation} 
  where we used the facts that   (1) $\varphi_{f_k}=0$ on each  face  $f_i$ for $i \neq k$, (2) there exists a sub-domain $\widehat{f_k}\subset f_k$ such that  $\varphi_{f_k} \geq \rho_1$ for some constant $\rho_1>0$, 

By applying the Cauchy-Schwarz inequality, \eqref{t3a}, the domain inequality \cite{wy3655},  and using Lemma \ref{phi1}, we obtain:
\begin{equation*}
\begin{split}
 \int_{f_k}|\sigma_b-\sigma_0|^2  ds\leq &C \int_{f_k}|\sigma_b-\sigma_0|^2  \varphi_{f_k}ds \\
 \leq & C(\|\nabla_w \sigma\|_T+\|\nabla \sigma_0\|_T)\| \boldsymbol{\psi}\|_T\\
 \leq & {Ch_T^{\frac{1}{2}} (\|\nabla_w \sigma\|_T+\|\nabla \sigma_0\|_T) (\int_{ f_k }|\sigma_0-\sigma_b|^2ds)^{\frac{1}{2}}},
 \end{split}
\end{equation*}
which, from Lemma \ref{norm1a}, gives 
$$
 h_T^{-1}\int_{  f_k }|\sigma_b-\sigma_0|^2  ds \leq C  (\|\nabla_w \sigma\|^2_T+\|\nabla \sigma_0\|^2_T)\leq C\|\nabla_w \sigma\|^2_T.
$$
This, together with Lemma \ref{norm1a},  \eqref{3norma} and \eqref{disnorma}, gives
$$
 C_1\|\sigma\|_{1, h}\leq \3bar \sigma\3bar_{W_h}.
$$

Next, from \eqref{disgradient*}, we have
$$
 (\nabla_{w} \sigma, \boldsymbol{\psi})_T= (\nabla \sigma_0,  \boldsymbol{\psi})_T+
  \langle \sigma_b-\sigma_0,  \boldsymbol{\psi}\cdot \bn \rangle_{\partial T},
$$
which, using Cauchy-Schwarz inequality and  the trace inequality \eqref{trace}, gives 
$$
 \Big|(\nabla_{w} \sigma, \boldsymbol{\psi})_T\Big| \leq \|\nabla \sigma_0\|_T \|  \boldsymbol{\psi}\|_T+
Ch_T^{-\frac{1}{2}}\|\sigma_b-\sigma_0\|_{\partial T} \| \boldsymbol{\psi}\|_{T}.
$$
This yields
$$
\| \nabla_{w} \sigma\|_T^2\leq C( \|\nabla \sigma_0\|^2_T  +
 h_T^{-1}\|\sigma_b-\sigma_0\|^2_{\partial T}),
$$
 and further gives $$ \3bar \sigma\3bar_{W_h}  \leq C_2\|\sigma\|_{1, h}.$$

This completes the proof of the lemma.
 \end{proof}

  \begin{remark}
   If the polyhedral element $T$  is convex, 
  the  face-based bubble function in Lemma  \ref{phi1} and Lemma \ref{normmm} can be  simplified to
$$\varphi_{f_i}= \Pi_{k=1, \cdots, N, k\neq i}l_k(x).$$
It can be verified that (1)  $\varphi_{f_i}=0$ on the  face $f_k$ for $k \neq i$, (2) there exists a subdomain $\widehat{f_i}\subset f_i$ such that $\varphi_{f_i}\geq \rho_1$ for some constant $\rho_1>0$. Lemma  \ref{phi1} and Lemma \ref{normmm}   can be proved in the same manner using this simplified construction.  
\end{remark}

For any $\bv=\{\bv_0, \bv_b, \bv_n\}\in V_h$, we define  the energy norm as follows:
\begin{equation}\label{3norm}
\3bar \bv\3bar_{V_h} =\Big(\sum_{T\in {\cal T}_h} \| (\nabla \times)_w^2\bv_0\|_T^2\Big)^{\frac{1}{2}},
\end{equation}
and the following discrete $H^2$ norm:
\begin{equation}\label{disnorm}
\begin{split}
 \| \bv\|_{2,h} =&\Big(\sum_{T\in {\cal T}_h} \| (\nabla \times)^2\bv_0\|_T^2+h_T^{-3}\|\bv_0\times\bn-\bv_b\times\bn \|^2_{\partial T} \\&+h_T^{-1}\| \nabla\times\bv_0\times\bn-\bv_n\times\bn\|^2_{\partial T}\Big)^{\frac{1}{2}}.   
\end{split}
\end{equation}

 \begin{lemma}\label{n2} 
 For $\bv=\{\bv_0, \bv_b, \bv_n\}\in V_h$, there exists a constant $C$ such that
 \begin{equation}\label{normeq1}
     \|(\nabla\times)^2 \bv_0\|_T\leq C\|(\nabla\times)_w^2 \bv\|_T.
 \end{equation} 
\end{lemma}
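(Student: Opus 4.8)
The plan is to mirror the proof of Lemma \ref{norm1a}, replacing the weak-gradient identity \eqref{disgradient*} by the weak curl-curl identity \eqref{discurlcurlnew}. First I would reuse the very same element bubble function $\Phi_B=l_1^2(x)l_2^2(x)\cdots l_N^2(x)\in P_{2N}(T)$ constructed in Lemma \ref{norm1a}, which vanishes on $\partial T$ and is bounded below by $\rho_0$ on a subdomain $\hat T\subset T$. Since $\bv_0\in[P_k(T)]^3$ is a polynomial, $(\nabla\times)^2\bv_0\in[P_{k-2}(T)]^3$, so I would test \eqref{discurlcurlnew} with the admissible polynomial $\bq=\Phi_B(\nabla\times)^2\bv_0\in[P_{r_1}(T)]^3$ for $r_1=2N+k-2$. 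The goal is to reduce the right-hand side of \eqref{discurlcurlnew} to the interior term $((\nabla\times)^2\bv_0,\Phi_B(\nabla\times)^2\bv_0)_T$ by annihilating both boundary integrals.

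The key point — and what distinguishes this from the gradient case — is that \eqref{discurlcurlnew} carries \emph{two} boundary terms, one pairing against $\bq$ and one pairing against $\nabla\times\bq$. The term $\langle(\bv_n-\nabla\times\bv_0)\times\bn,\bq\rangle_{\partial T}$ vanishes at once because $\bq=\Phi_B(\nabla\times)^2\bv_0=0$ on $\partial T$. For the other term $\langle(\bv_b-\bv_0)\times\bn,\nabla\times\bq\rangle_{\partial T}$ I need $\nabla\times\bq=0$ on $\partial T$. Computing
\[
\nabla\times\bq=\nabla\Phi_B\times(\nabla\times)^2\bv_0+\Phi_B\,\nabla\times\big((\nabla\times)^2\bv_0\big),
\]
I would observe that the second summand vanishes on $\partial T$ since $\Phi_B=0$ there, while the first vanishes because the \emph{double} roots $l_i^2$ force $\nabla\Phi_B=0$ on every face $f_i$ (each term in $\nabla\Phi_B=\sum_k 2l_k\nabla l_k\prod_{i\neq k}l_i^2$ still contains a factor $l_j$ vanishing on $f_j$). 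Hence $\nabla\times\bq=0$ on $\partial T$ and the second boundary term drops as well. This is precisely where the squared construction earns its keep: unlike in Lemma \ref{norm1a}, the simpler product $l_1\cdots l_N$ would leave $\nabla\Phi_B\neq0$ on $\partial T$ and the $\nabla\times\bq$ boundary term would survive, so the squared bubble is essential here even for convex elements.

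With both boundary terms eliminated, \eqref{discurlcurlnew} collapses to
\[
((\nabla\times)_w^2\bv,\,\Phi_B(\nabla\times)^2\bv_0)_T=((\nabla\times)^2\bv_0,\,\Phi_B(\nabla\times)^2\bv_0)_T.
\]
I would then close the argument exactly as in \eqref{t2a}: the domain inverse inequality gives $((\nabla\times)^2\bv_0,\Phi_B(\nabla\times)^2\bv_0)_T\geq C\|(\nabla\times)^2\bv_0\|_T^2$, while Cauchy–Schwarz together with $\|\Phi_B(\nabla\times)^2\bv_0\|_T\leq\|(\nabla\times)^2\bv_0\|_T$ bounds the left-hand side by $\|(\nabla\times)_w^2\bv\|_T\|(\nabla\times)^2\bv_0\|_T$. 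Combining these and cancelling one factor of $\|(\nabla\times)^2\bv_0\|_T$ yields \eqref{normeq1}. The main obstacle is the verification that $\nabla\Phi_B=0$ on $\partial T$; everything else is a routine transcription of the scalar argument, with the curl-curl operator and its two trace pairings replacing the divergence pairing.
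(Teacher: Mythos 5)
Your proposal is correct and follows essentially the same route as the paper: test \eqref{discurlcurlnew} with $\bq=\Phi_B(\nabla\times)^2\bv_0$ for the squared bubble $\Phi_B=l_1^2\cdots l_N^2$ and $r_1=2N+k-2$, kill both boundary terms using $\Phi_B=0$ and (via the product rule) $\nabla\Phi_B=0$ on $\partial T$, then conclude with the domain inverse inequality and Cauchy--Schwarz. Your explicit expansion of $\nabla\times\bq$ is in fact a cleaner justification of the step the paper states tersely as ``$\nabla\times\Phi_B=0$ on $\partial T$.''
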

\begin{proof} 
Recall that the bubble function of  the element  $T$  is defined as follows:
 $$
 \Phi_B =l^2_1(\bx)l^2_2(\bx)\cdots l^2_N(\bx) \in  P_{2N}(T).
 $$
 For $v=\{\bv_0, \bv_b, \bv_n\}\in V_h$,   letting $r_1=2N+k-2$ and
 $\bq=\Phi_B (\nabla \times)^2 \bv_0$ in \eqref{discurlcurlnew} yields 

\begin{equation} \label{t12}
\begin{split}
&((\nabla \times)^2_{w} \bv, \Phi_B (\nabla \times)^2 \bv_0)_T\\=&((\nabla\times)^2 \bv_0,   \Phi_B (\nabla \times)^2 \bv_0)_T-\langle (\bv_b-\bv_0)\times\bn, \nabla\times(\Phi_B (\nabla \times)^2 \bv_0)\rangle_{\partial T}\\&-\langle (\bv_n-\nabla\times\bv_0)\times\bn, \Phi_B (\nabla \times)^2 \bv_0\rangle_{\partial T}
\\=&((\nabla\times)^2 \bv_0,   \Phi_B (\nabla \times)^2 \bv_0)_T,
\end{split} 
\end{equation}
where we used $\Phi_B=0$ and $\nabla\times \Phi_B=0$ on $\partial T$.

Recall that there exists a sub-domain $\hat{T}\subset T$,  such that
 $ \Phi_B\geq\rho_0$  for some constant $\rho_0>0$.  From the domain inverse inequality \cite{wy3655},  there exists a constant $C$ such that 
\begin{equation}\label{t2}
((\nabla\times)^2 \bv_0,   \Phi_B (\nabla \times)^2 \bv_0)_T \geq C ((\nabla\times)^2 \bv_0,   (\nabla \times)^2 \bv_0)_T.
\end{equation} 

From Cauchy-Schwarz inequality and \eqref{t12}-\eqref{t2}, we have
 \begin{equation*}
     \begin{split}
      ((\nabla\times)^2 \bv_0,    (\nabla \times)^2 \bv_0)_T&\leq C ((\nabla \times)^2_{w} \bv, \Phi_B (\nabla \times)^2 \bv_0)_T \\& \leq C
\|(\nabla \times)^2_{w} \bv\|_T \| \Phi_B (\nabla \times)^2 \bv_0\|_T\\
& \leq C
\|(\nabla \times)^2_{w} \bv\|_T \|  (\nabla \times)^2 \bv_0\|_T,
     \end{split}
 \end{equation*}
which gives
 $$
 \|(\nabla\times)^2 \bv_0\|_T\leq C\|(\nabla \times)^2_{w} \bv\|_T.
 $$

This completes the proof of the lemma.
\end{proof}

 
Recall that a face-based bubble function  is as follows:
$$\varphi_{f_k}= \Pi_{i=1, \cdots, N, i\neq k}l_i^2(x).$$    Let $\bq=(\bv_b-\bv_0)\times\bn l_k(x)\varphi_{f_k}$. It is straightforward to check that $\bq=0$ on each face $f_i$ for $i=1, \cdots, N$, $\nabla \times \bq =0$ on each  face  $f_i$ for $i \neq k$ and $\nabla \times \bq =(\bv_b-\bv_0)\times\bn (\nabla \times l_k) \varphi_{f_k}=\mathcal{O}( \frac{(\bv_b-\bv_0)\times\bn \varphi_{f_k}}{h_T})$  on face $f_k$.

\begin{lemma}\label{phi2}
 For $\{\bv_0,\bv_b, \bv_n\}\in V_h$,  let $\bq=(\bv_b-\bv_0)\times\bn l_k(x)\varphi_{f_k}$, where $\bn$ is the unit outward normal direction to the  face  $f_k$. The following inequality holds:
\begin{equation}
  \|\bq\|_T ^2 \leq Ch_T  \int_{f_k}((\bv_b-\bv_0)\times\bn)^2ds.
\end{equation}
\end{lemma}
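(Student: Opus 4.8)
The plan is to mirror the argument of Lemma~\ref{phi1}, now applied to the vector-valued tangential jump $(\bv_b - \bv_0)\times\bn$ together with the extra bounded scalar factor $l_k(x)$. The essential device is again the extension-by-projection that turns the volume integral over $T$ into a surface integral over $f_k$ at the cost of a single power of $h_T$.

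First I would extend both $\bv_b$ and the trace $\bv_{0,trace}$ of $\bv_0$ on $f_k$ from the two-dimensional face $f_k$ to the whole element $T$ through the orthogonal projection, setting $\bv_b(X):=\bv_b(Proj_{f_k}(X))$ and $\bv_{0,trace}(X):=\bv_{0,trace}(Proj_{f_k}(X))$. Exactly as in Lemma~\ref{phi1}, since $Proj_{f_k}$ is an affine function of $X$, each scalar component of these extensions remains a polynomial in $X=(x_1,x_2,x_3)$; applying that scalar argument componentwise shows the vector extensions, and hence the tangential field $(\bv_b-\bv_{0,trace})\times\bn$, are polynomial. By construction this extended field depends on $X$ only through $Proj_{f_k}(X)$, so it is constant along the lines perpendicular to $f_k$, while on $f_k$ itself the extended trace coincides with $\bv_0$.

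Next I would record the two bounds that render the scalar weights harmless: since $l_k(x)=\frac{1}{h_T}\overrightarrow{AX}\cdot\bn_k$ vanishes on $f_k$ and the diameter of $T$ is $h_T$, one has $|l_k(x)|\le C$ on $T$; likewise $\varphi_{f_k}=\Pi_{i\neq k}l_i^2(x)$ is a product of such normalized linear factors, so $|\varphi_{f_k}(x)|\le C$ on $T$. Writing $\|\bq\|_T^2=\int_T|(\bv_b-\bv_0)\times\bn|^2\, l_k^2\,\varphi_{f_k}^2\, dT$ and using $l_k^2\varphi_{f_k}^2\le C$, I pass to the projection-extended tangential field and convert the volume integral to a face integral: because the integrand is constant in the direction normal to $f_k$ and the normal extent of $T$ is at most $h_T$, integrating out that direction contributes a factor $\le Ch_T$, yielding $\|\bq\|_T^2\le Ch_T\int_{f_k}|(\bv_b-\bv_{0,trace})\times\bn|^2\, ds$. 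Since the extended trace equals $\bv_0$ on $f_k$, this is exactly the claimed bound.

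I do not expect a genuine obstacle here, because the substantive work — that the projection extension preserves polynomiality and converts the volume integral into an $O(h_T)$ multiple of the face integral — was already carried out in Lemma~\ref{phi1}; the present lemma only adds the bounded scalar factor $l_k(x)$ and the bounded linear operation of crossing with $\bn$, both of which are absorbed into the constant $C$. The single point deserving a line of care is the uniform bound $|l_k(x)|\le C$ on $T$, which follows from the $h_T$-normalization in the definition of $l_k$ together with the shape-regularity of $\T_h$.
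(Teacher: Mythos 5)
Your proof is correct, and for the key step it is actually more direct than the paper's. Both arguments share the same backbone — extend $\bv_b$ and the trace of $\bv_0$ from $f_k$ to $T$ by composition with the affine projection $Proj_{f_k}$ (so the extended jump is polynomial and constant along lines normal to $f_k$), then convert the volume integral to $Ch_T$ times a face integral. Where you differ is in how the scalar weight $l_k\varphi_{f_k}$ is disposed of: you simply observe that $|l_i(x)|=\tfrac{1}{h_T}|\overrightarrow{AX}\cdot\bn_i|\le 1$ on $T$ by the $h_T$-normalization, hence $l_k^2\varphi_{f_k}^2\le C$ pointwise, and absorb it into the constant. The paper instead routes through a Poincar\'e-type inequality $\|\bq\|_T\le Ch_T\|\nabla\times\bq\|_T$ (justified by $\bq$ vanishing on $\partial T$), expands the curl keeping only the term where the derivative falls on $l_k$, and uses $|\nabla\times l_k|=\mathcal{O}(h_T^{-1})$ so that the two factors of $h_T$ cancel — arriving at exactly the weighted volume integral you reach in one line. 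Your version buys simplicity and avoids any delicacy about a curl-based Poincar\'e inequality for vector fields; it loses nothing, since the factor $l_k$ is needed in Lemma \ref{normeqva} only to make $\nabla\times\bq$ nonzero on $f_k$, not to prove this $L^2$ bound. The one point you share with the paper and leave implicit is that the shadow of $T$ on the plane of $f_k$ may exceed $f_k$, so the final volume-to-face step uses equivalence of $L^2$ norms of fixed-degree polynomials on comparable two-dimensional domains under shape regularity; the paper subsumes this under ``the properties of the projection,'' so this is not a gap relative to its own level of detail.
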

\begin{proof}
 We first extend $\bv_b$, initially defined on the  two dimensional  face  $f_k$, to the entire three dimensional polyhedral element $T$  using  the following formula:
$$
 \bv_b (X)= \bv_b(Proj_{f_k} (X)),
$$
where $X=(x_1,x_2,x_3)$ is any point in the  element $T$, $Proj_{f_k} (X)$ denotes the orthogonal projection of the point $X$ onto the  face  $f_k$.
We assert that $\bv_b$ remains  a polynomial defined on the element $T$ after the extension. This assertion can be derived in the same manner as in Lemma \ref{phi1}.

 Secondly, let $\bv_{trace}$ denote the trace of $\bv_0$ on the  face  $f_k$. We extend $\bv_{trace}$   to the entire element $T$  using  the following formula:
$$
 \bv_{trace} (X)= \bv_{trace}(Proj_{f_k} (X)),
$$
where $X$ is any point in the element $T$, $Proj_{f_k} (X)$ denotes the projection of the point $X$ onto the  face  $f_k$. Similar to Lemma \ref{phi1}, $\bv_{trace}$ remains a polynomial after this extension. 

Let $\bq=(\bv_b-\bv_0)\times\bn l_k(x)\varphi_{f_k}$. We have
\begin{equation*}
    \begin{split}
\|\bq\|^2_T  =
\int_T \bq^2dT  \leq  &
 C h_T^2\int_T (\nabla\times\bq)^2dT  \\ \leq 
&Ch_T^2 \int_T ( \nabla\times((\bv_b-\bv_{trace})(X)\times\bn l_k\varphi_{f_k}))^2dT\\
\leq &Ch_T^2 \int_T (  (\bv_b-\bv_{trace})(X)\times\bn (\nabla\times l_k)\varphi_{f_k})^2dT\\
\leq &C  \int_T (  (\bv_b-\bv_{trace})(X)\times\bn \varphi_{f_k})^2dT\\
\leq &C h_T \int_{f_k} ((\bv_b-v_{trace})(Proj_{f_k} (X))\times\bn)^2ds\\
 \leq &Ch_T  \int_{f_k} ((\bv_b-\bv_0)\times\bn)^2ds, 
    \end{split}
\end{equation*} 
where we used  the  Poincare inequality since   $\bq=0$ on each face $f_i$ for $i=1, \cdots, N$,  $\nabla \times \bq =0$ on each  face  $f_i$ for $i \neq k$, $\nabla \times \bq =(\bv_b-\bv_0)\times\bn (\nabla \times l_k) \varphi_{f_k}=\mathcal{O}( \frac{(\bv_b-\bv_0)\times\bn \varphi_{f_k}}{h_T})$  on face $f_k$, the fact that there exists a subdomain $\widehat{f_k}$ such that $\varphi_{f_k}\geq \rho_1$ for some constant $\rho_1>0$, and applied the properties of the projection.

 This completes the proof of the lemma.

\end{proof}

\begin{lemma}\label{phi3}
 For $\{\bv_0,\bv_b, \bv_n\}\in V_h$,  let $\bq=(\bv_n-\nabla\times\bv_0)\times\bn \varphi_{f_k}$, where $\bn$ is the unit outward normal direction to the  face  $f_k$. The following inequality holds:
\begin{equation}
  \|\bq\|_T ^2 \leq Ch_T \int_{f_k}((\bv_n-\nabla\times\bv_0)\times\bn )^2ds.
\end{equation}
\end{lemma}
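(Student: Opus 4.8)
The plan is to follow the template of Lemma~\ref{phi1} almost verbatim, because the test function $\bq=(\bv_n-\nabla\times\bv_0)\times\bn\,\varphi_{f_k}$ has exactly the same structure as $\boldsymbol{\psi}$ there: it is a face quantity multiplied by the bubble $\varphi_{f_k}$, with \emph{no} extra factor $l_k$. Consequently $\bq$ vanishes on every face $f_i$ with $i\neq k$ (since $\varphi_{f_k}=0$ there) but is in general nonzero on $f_k$ itself, so the Poincar\'e argument used in Lemma~\ref{phi2} is neither available nor needed; instead I would bound $\|\bq\|_T^2$ directly.

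First I would extend $(\bv_n-\nabla\times\bv_0)\times\bn$ off $f_k$ to all of $T$ by composing with the orthogonal projection $Proj_{f_k}$, exactly as for $\sigma_b$ and $\sigma_{trace}$ in Lemma~\ref{phi1}. Here $\bv_n$ is already a tangential polynomial on $f_k$, and the trace of $\nabla\times\bv_0$ on $f_k$ is a polynomial since $\bv_0\in[P_k(T)]^3$ forces $\nabla\times\bv_0\in[P_{k-1}(T)]^3$. Because $Proj_{f_k}$ is affine in $X$, each extension is again a polynomial on $T$ that is constant along the direction normal to $f_k$; crossing with the fixed normal $\bn$ of the flat face $f_k$ preserves both properties and yields a tangential polynomial field.

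Next I would compute $\|\bq\|_T^2=\int_T\big((\bv_n-\nabla\times\bv_0)(X)\times\bn\big)^2\varphi_{f_k}^2\,dT$ in normal--tangential coordinates adapted to $f_k$. Since the extended integrand is independent of the normal coordinate and $\varphi_{f_k}$ is uniformly bounded, integrating across the $\mathcal{O}(h_T)$ thickness of $T$ in the normal direction produces a factor $h_T$ and collapses the volume integral to the face integral over $f_k$; the remaining properties of $\varphi_{f_k}$ (vanishing on the $f_i$, $i\neq k$, and $\varphi_{f_k}\geq\rho_1$ on a subdomain $\widehat{f_k}\subset f_k$) together with the projection identities then give $\|\bq\|_T^2\leq Ch_T\int_{f_k}((\bv_n-\nabla\times\bv_0)\times\bn)^2\,ds$.

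The one point requiring care, and the step I expect to be the main obstacle, is the polynomial-extension bookkeeping: confirming that the trace of $\nabla\times\bv_0$ (a degree-$(k-1)$ object, unlike the degree-$k$ datum in Lemma~\ref{phi1}) genuinely extends to a polynomial through $Proj_{f_k}$, and that the normal--tangential change of variables stays uniform in $h_T$ under shape regularity. Both mirror the computation already carried out in Lemma~\ref{phi1}, so once the extensions are set up the estimate is routine.
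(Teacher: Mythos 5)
Your proposal is correct and follows essentially the same route as the paper: the paper likewise extends the face data ($\bv_n$ and the trace of $\bv_0$, hence of $\nabla\times\bv_0$) to all of $T$ by composing with the affine projection $Proj_{f_k}$, uses that the extension is constant in the normal direction so the volume integral collapses to $h_T$ times the face integral, and invokes only the boundedness of $\varphi_{f_k}$. In particular, your observation that no Poincar\'e step is needed here (in contrast to Lemma~\ref{phi2}, where the extra factor $l_k$ forces one) matches the paper's argument exactly.
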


  \begin{proof}
 We first extend $\bv_n$, initially defined on the  two dimensional  face  $f_k$, to the entire three dimensional polyhedral element $T$  using  the following formula:
$$
 \bv_n (X)= \bv_n(Proj_{f_k} (X)),
$$
where $X=(x_1,x_2,x_3)$ is any point in the  element $T$, $Proj_{f_k} (X)$ denotes the orthogonal projection of the point $X$ onto the  face  $f_k$.
We assert that $\bv_n$ remains  a polynomial defined on the element $T$ after the extension. This assertion can be derived in the same manner as in Lemma \ref{phi1}.

 Secondly, let $\bv_{trace}$ denote the trace of $\bv_0$ on the  face  $f_k$. We extend $\bv_{trace}$   to the entire element $T$  using  the following formula:
$$
 \bv_{trace} (X)= \bv_{trace}(Proj_{f_k} (X)),
$$
where $X$ is any point in the element $T$, $Proj_{f_k} (X)$ denotes the projection of the point $X$ onto the  face  $f_k$. Similar to Lemma \ref{phi1}, $\bv_{trace}$ remains a polynomial after this extension. 

Let $\bq=(\bv_n-\nabla\times\bv_0)\times\bn \varphi_{f_k}$. We have
\begin{equation*}
    \begin{split}
\|\bq\|^2_T  =
\int_T \bq^2dT   \leq 
&C  \int_T ( (\bv_n-\nabla\times\bv_{trace})(X)\times\bn \varphi_{f_k})^2dT\\ 
\leq &Ch_T  \int_{f_k} ((\bv_n-\nabla\times\bv_{trace})(Proj_{f_k} (X))\times\bn \varphi_{f_k})^2ds\\
 \leq &Ch_T \int_{f_k} (((\bv_n-\nabla\times\bv_0) \times\bn)^2ds, 
    \end{split}
\end{equation*} 
where we used   the fact that there exists a subdomain $\widehat{f_k}$ such that $\varphi_{f_k}\geq \rho_1$ for some constant $\rho_1>0$,  and applied the properties of the projection.

 This completes the proof of the lemma.

\end{proof}

\begin{lemma}\label{normeqva}   There exist two positive constants $C_1$ and $C_2$ such that for any $\bv=\{\bv_0, \bv_b, \bv_n\} \in V_h$, we have
 \begin{equation}\label{normeq}
 C_1\|\bv\|_{2, h}\leq \3bar \bv\3bar_{V_h}  \leq C_2\|\bv\|_{2, h}.
\end{equation}
\end{lemma}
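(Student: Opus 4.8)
The plan is to follow the skeleton of the scalar norm equivalence in Lemma \ref{normmm}, the essential new feature being that the norm $\|\bv\|_{2,h}$ now carries \emph{two} boundary contributions that must each be recovered from $\3bar\bv\3bar_{V_h}$: the tangential jump $(\bv_b-\bv_0)\times\bn$ weighted by $h_T^{-3}$, and the tangential curl jump $(\bv_n-\nabla\times\bv_0)\times\bn$ weighted by $h_T^{-1}$. Throughout I would work element by element and face by face, testing the identity \eqref{discurlcurlnew} against bubble-weighted functions supported on a single face $f_k$, supplied by Lemmas \ref{phi2} and \ref{phi3}.

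To recover the first term, I would take a bubble test function $\bq$ supported on $f_k$, namely the tangential lift of the jump multiplied by $l_k\varphi_{f_k}$ as in Lemma \ref{phi2}, chosen so that $\bq=0$ on all of $\partial T$ while $\nabla\times\bq$ vanishes on every face except $f_k$ and, on $f_k$, is a nonzero multiple of $(\bv_b-\bv_0)\times\bn$ of order $h_T^{-1}$ (the tangential rotation is what makes the curl align with the jump rather than be orthogonal to it). Because $\bq|_{\partial T}=0$, the last term of \eqref{discurlcurlnew} drops out and the middle term collapses, up to a constant, to $h_T^{-1}\int_{f_k}|(\bv_b-\bv_0)\times\bn|^2\varphi_{f_k}\,ds$. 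Applying the domain inequality on $f_k$ to replace $\varphi_{f_k}$ by $1$, bounding $((\nabla\times)^2\bv_0,\bq)_T-((\nabla\times)_w^2\bv,\bq)_T$ by Cauchy--Schwarz together with the estimate $\|\bq\|_T\le Ch_T^{1/2}(\int_{f_k}|(\bv_b-\bv_0)\times\bn|^2ds)^{1/2}$ of Lemma \ref{phi2}, and finally invoking Lemma \ref{n2} to absorb $\|(\nabla\times)^2\bv_0\|_T$, I would arrive at $h_T^{-3}\|(\bv_b-\bv_0)\times\bn\|_{\partial T}^2\le C\|(\nabla\times)_w^2\bv\|_T^2$.

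For the second term I would test with $\bq=(\bv_n-\nabla\times\bv_0)\times\bn\,\varphi_{f_k}$ as in Lemma \ref{phi3}, which is now nonzero on $f_k$, so the last term of \eqref{discurlcurlnew} reproduces $\int_{f_k}|(\bv_n-\nabla\times\bv_0)\times\bn|^2\varphi_{f_k}\,ds$. The difficulty is that this $\bq$ no longer vanishes on $\partial T$, so the middle term $\langle(\bv_b-\bv_0)\times\bn,\nabla\times\bq\rangle_{\partial T}$ survives, and this cross term is the main obstacle of the proof. I would dominate it by $Ch_T^{-1}\|(\bv_b-\bv_0)\times\bn\|_{f_k}\,\|(\bv_n-\nabla\times\bv_0)\times\bn\|_{f_k}$ and then substitute the already-established bound $\|(\bv_b-\bv_0)\times\bn\|_{f_k}\le Ch_T^{3/2}\|(\nabla\times)_w^2\bv\|_T$, which turns the cross term into one of exactly the same structure as the remaining terms. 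After dividing through by $(\int_{f_k}|(\bv_n-\nabla\times\bv_0)\times\bn|^2ds)^{1/2}$, using Lemma \ref{phi3} for $\|\bq\|_T$ and Lemma \ref{n2} once more, this yields $h_T^{-1}\|(\bv_n-\nabla\times\bv_0)\times\bn\|_{\partial T}^2\le C\|(\nabla\times)_w^2\bv\|_T^2$. Summing the two boundary estimates over faces and elements, together with Lemma \ref{n2} applied to $\|(\nabla\times)^2\bv_0\|_T$, gives the lower bound $C_1\|\bv\|_{2,h}\le\3bar\bv\3bar_{V_h}$.

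The upper bound $\3bar\bv\3bar_{V_h}\le C_2\|\bv\|_{2,h}$ is the easy direction: taking the admissible test function $\bq=(\nabla\times)_w^2\bv\in[P_{r_1}(T)]^3$ in \eqref{discurlcurlnew}, estimating the three right-hand terms by Cauchy--Schwarz, and passing the boundary factors to volume norms through the inverse/trace inequality \eqref{trace} (so that $\|\nabla\times\bq\|_{\partial T}\le Ch_T^{-3/2}\|\bq\|_T$ and $\|\bq\|_{\partial T}\le Ch_T^{-1/2}\|\bq\|_T$), one cancels a factor of $\|(\nabla\times)_w^2\bv\|_T$ and obtains $\|(\nabla\times)_w^2\bv\|_T\le\|(\nabla\times)^2\bv_0\|_T+Ch_T^{-3/2}\|(\bv_b-\bv_0)\times\bn\|_{\partial T}+Ch_T^{-1/2}\|(\bv_n-\nabla\times\bv_0)\times\bn\|_{\partial T}$; squaring and summing recognizes the right-hand side as $\|\bv\|_{2,h}$. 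The only genuinely delicate point is the cross term in the second boundary estimate, and the key structural observation is that it can be absorbed only because the tangential-jump bound is proved first, so the order in which the two boundary contributions are treated is essential.
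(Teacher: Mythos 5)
Your proposal is correct and follows essentially the same route as the paper: the same two bubble test functions $\bq=(\bv_b-\bv_0)\times\bn\,l_k\varphi_{f_k}$ and $\bq=(\bv_n-\nabla\times\bv_0)\times\bn\,\varphi_{f_k}$ in \eqref{discurlcurlnew}, combined with Lemmas \ref{n2}, \ref{phi2}, \ref{phi3}, with the tangential-jump bound established first precisely so that the surviving cross term $\langle(\bv_b-\bv_0)\times\bn,\nabla\times\bq\rangle_{\partial T}$ can be absorbed, and the same trace/inverse-inequality argument for the upper bound. No gaps.
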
 

\begin{proof}   
   Letting $\bq=(\bv_b-\bv_0)\times\bn l_k(x)\varphi_{f_k}$, it is straightforward to verify that  $\bq=0$ on each face $f_i$ for $i=1, \cdots, N$,  $\nabla \times \bq =0$ on each  face  $f_i$ for $i \neq k$ and $\nabla \times \bq =(\bv_b-\bv_0)\times\bn (\nabla \times l_k) \varphi_{f_k}=\mathcal{O}( \frac{(\bv_b-\bv_0)\times\bn \varphi_{f_k}}{h_T})$  on face $f_k$. Substituting  $\bq=(\bv_b-\bv_0)\times\bn l_k(x)\varphi_{f_k}$ into \eqref{discurlcurlnew} gives:
  \begin{equation} \label{t33}
\begin{split}
&((\nabla \times)^2_{w} \bv, \bq)_T\\=&((\nabla\times)^2 \bv_0,   \bq)_T-\langle (\bv_b-\bv_0)\times\bn, \nabla\times\bq\rangle_{\partial T}-\langle (\bv_n-\nabla\times\bv_0)\times\bn, \bq\rangle_{\partial T}\\
=&((\nabla\times)^2 \bv_0,   \bq)_T-C \int_{f_k} |(\bv_b-\bv_0)\times\bn|^2   (\nabla\times  l_k)\varphi_{f_k} ds\\
=&((\nabla\times)^2 \bv_0,   \bq)_T-C h_T^{-1}\int_{f_k} |(\bv_b-\bv_0)\times\bn|^2   \varphi_{f_k} ds.
\end{split} 
\end{equation}
Using Cauchy-Schwarz inequality, the domain inverse inequality \cite{wy3655}, Lemma \ref{phi2}, and \eqref{t33},  gives
\begin{equation*}
\begin{split}
  \int_{f_k}|(\bv_b-\bv_0)\times\bn|^2 ds\leq & C \int_{f_k} |(\bv_b-\bv_0)\times\bn|^2   \varphi_{f_k}  ds
  \\ \leq& C h_T(\|(\nabla \times)^2_{w} \bv\|_T+\|(\nabla\times)^2 \bv_0\|_T){ \|\bq\|_T}\\
 \leq & C { h_T^{\frac{3}{2}}} (\|(\nabla \times)^2_{w} \bv\|_T+\|(\nabla\times)^2 \bv_0\|_T){ (\int_{f_k} |(\bv_b-\bv_0)\times\bn|^2ds)^{\frac{1}{2}}},
 \end{split}
\end{equation*}
which, from  \eqref{normeq1}, gives 
\begin{equation}\label{t21}
 h_T^{-3}\int_{f_k} |(\bv_b-\bv_0)\times\bn|^2ds \leq C  (\|(\nabla \times)^2_{w} \bv\|_T+\|(\nabla\times)^2 \bv_0\|_T)\leq C\|(\nabla \times)^2_{w} \bv\|^2_T.   
\end{equation}

Letting $\bq=(\bv_n-\nabla\times\bv_0)\times\bn \varphi_{f_k}$ in \eqref{discurlcurlnew}  gives

\begin{equation} \label{t3}
\begin{split}
&((\nabla \times)^2_{w} \bv, \bq)_T\\=&((\nabla\times)^2 \bv_0,   \bq)_T-\langle (\bv_b-\bv_0)\times\bn, \nabla\times\bq\rangle_{\partial T}-\langle (\bv_n-\nabla\times\bv_0)\times\bn, \bq\rangle_{\partial T}\\
=&((\nabla\times)^2 \bv_0,   \bq)_T-\langle (\bv_b-\bv_0)\times\bn, \nabla\times\bq\rangle_{\partial T}\\&-\int_{f_k}| (\bv_n-\nabla\times\bv_0)\times\bn|^2\varphi_{f_k}ds.
\end{split} 
\end{equation}
Using Cauchy-Schwarz inequality, the domain inverse inequality \cite{wy3655}, Lemma \ref{phi3}, the trace inequality \eqref{trace}, the inverse inequality,  \eqref{t21}, and \eqref{t3},   gives
 \begin{equation*}
    \begin{split}
       &  \int_{f_k}| (\bv_n-\nabla\times\bv_0)\times\bn|^2  ds\\\leq &C  \int_{f_k}| (\bv_n-\nabla\times\bv_0)\times\bn|^2  \varphi_{f_k}ds\\
  \leq & C (\|(\nabla \times)^2_{w} \bv\|_T+\|(\nabla\times)^2 \bv_0\|)\| \bq\|_T +  C\|(\bv_b-\bv_0)\times\bn\|_{\partial T}\|\nabla\times\bq\|_{\partial T}\\
 \leq & C h_T^{\frac{1}{2}} (\|(\nabla \times)^2_{w} \bv\|_T+\|(\nabla\times)^2 \bv_0\|)(\int_{f_k}| (\bv_n-\nabla\times\bv_0)\times\bn|^2ds)^{\frac{1}{2}}\\&+ C h_T^{\frac{3}{2}}  \|(\nabla \times)^2_{w} \bv\|_T  h_T^{-\frac{1}{2}}h_T^{-1}
 h_T^{\frac{1}{2}}(\int_{f_k}|(\bv_n-\nabla\times\bv_0)\times\bn|^2ds)^{\frac{1}{2}},  
    \end{split}
\end{equation*}
which, from \eqref{normeq1}, gives 
\begin{equation} \label{t11}
\begin{split}
    h_T^{-1}\int_{f_k}|(\bv_n-\nabla\times\bv_0)\times\bn|^2  ds &\leq C  (\|(\nabla \times)^2_{w} \bv\|_T+\|(\nabla\times)^2 \bv_0\|)\\& \leq C\|(\nabla \times)^2_{w} \bv\|^2_T.
\end{split}
\end{equation}
  Using    \eqref{3norm}-\eqref{normeq1}, \eqref{t21} and  \eqref{t11}   gives
$$
 C_1\|\bv\|_{2, h}\leq \3bar \bv\3bar_{V_h}.
$$

Next, applying Cauchy-Schwarz inequality, the inverse inequality, and  the trace inequality \eqref{trace} to  \eqref{discurlcurlnew}, gives 
\begin{equation*} 
\begin{split}
 \Big| ((\nabla \times)^2_{w} \bv, \bq)_T \Big|  
\leq &\|(\nabla\times)^2 \bv_0\|_T \|\bq\|_T+\|(\bv_b-\bv_0)\times\bn\|_{\partial T}\| \nabla\times\bq\|_{\partial T}\\
 &\qquad
   +\|(\bv_n-\nabla\times\bv_0)\times\bn\|_{\partial T}\| \bq\|_{\partial T}\\ 
\leq &\|(\nabla\times)^2 \bv_0\|_T \|\bq\|_T+Ch_T^{-\frac{3}{2}}\|(\bv_b-\bv_0)\times\bn\|_{\partial T}\|\bq\|_{T} \\
  &\qquad +Ch_T^{-\frac{1}{2}}\|(\bv_n-\nabla\times\bv_0)\times\bn\|_{\partial T}\| \bq\|_{T}. 
\end{split} 
\end{equation*}
This yields
\a{ 
\|(\nabla \times)^2_{w} \bv\|_T^2 & \leq C( \|(\nabla\times)^2 \bv_0\|^2_T  +
 h_T^{-3}\|(\bv_b-\bv_0)\times\bn\|^2_{\partial T} \\
  &\qquad +h_T^{-1}\|(\bv_n-\nabla\times\bv_0)\times\bn\|^2_{\partial T}),
 }
 and further gives $$ \3bar \bv\3bar_{V_h}  \leq C_2\|\bv\|_{2, h}.$$

 This completes the proof of the lemma. 
 \end{proof}

\begin{theorem}
The weak Galerkin finite element scheme (\ref{32})-(\ref{2}) has a unique solution.
\end{theorem}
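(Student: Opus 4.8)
The plan is to use that \eqref{32}--\eqref{2} is a square linear system on the finite-dimensional space $V_h^0\times W_h^0$, so existence is equivalent to uniqueness; it therefore suffices to show that $\bf=0$ forces $(\bu_h;p_h)=(0;0)$. The first step is the standard energy computation: taking $\bv_h=\bu_h$ in \eqref{32} and $q_h=p_h$ in \eqref{2} with $\bf=0$ and adding cancels the antisymmetric coupling $b(\bu_h,p_h)$, leaving
\[
a(\bu_h,\bu_h)+c(p_h,p_h)=\3bar\bu_h\3bar_{V_h}^2+\3bar p_h\3bar_{W_h}^2=0 .
\]
Both terms being nonnegative, $\3bar\bu_h\3bar_{V_h}=0$ and $\3bar p_h\3bar_{W_h}=0$. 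For the multiplier, the norm equivalence of Lemma~\ref{normmm} gives $\|p_h\|_{1,h}=0$, so by \eqref{disnorma} $\nabla p_{h,0}=0$ on each $T$ and $p_{h,0}=p_{h,b}$ on $\partial T$; hence $p_{h,0}$ is one global constant, which vanishes since $p_{h,b}=0$ on $\partial\Omega$, giving $p_h=0$.

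The substantive part is to deduce $\bu_h=0$ from $\3bar\bu_h\3bar_{V_h}=0$, since the energy is only a seminorm. By Lemma~\ref{normeqva} this yields $\|\bu_h\|_{2,h}=0$, so \eqref{disnorm} gives $(\nabla\times)^2\bu_{h,0}=0$ on each $T$, $\bu_{h,0}\times\bn=\bu_{h,b}\times\bn$, and $(\nabla\times\bu_{h,0})\times\bn=\bu_{h,n}\times\bn$ on each $\partial T$. Because $\bu_{h,b},\bu_{h,n}$ are single valued on interior faces and vanish tangentially on $\partial\Omega$, the tangential traces of $\bu_{h,0}$ and of $\nabla\times\bu_{h,0}$ are continuous across interior faces and vanish on $\partial\Omega$; this is precisely $H(curl)$-conformity of $\bu_{h,0}$ and of $\nabla\times\bu_{h,0}$, so $\bu_{h,0}\in H_0(curl^2;\Omega)$ and $(\nabla\times)^2\bu_{h,0}=0$ globally. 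Integrating $((\nabla\times)^2\bu_{h,0},\bu_{h,0})=0$ by parts, the boundary contributions drop out by the homogeneous tangential conditions, leaving $\|\nabla\times\bu_{h,0}\|^2=0$, i.e.\ $\nabla\times\bu_{h,0}=0$ in $\Omega$.

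To pin $\bu_{h,0}$ down I would then invoke the second equation \eqref{2}: with $p_h=0$ it reads $\sum_{T}(\bu_{h,0},\nabla_w q_h)_T=0$ for all $q_h\in W_h^0$. Rewriting each term through \eqref{disgradient} with the admissible test vector $\bu_{h,0}\in[P_k(T)]^3$ gives $\sum_T\big[-(q_{h,0},\nabla\cdot\bu_{h,0})_T+\langle q_{h,b},\bu_{h,0}\cdot\bn\rangle_{\partial T}\big]=0$. Choosing $q_{h,b}=0$ and $q_{h,0}=\nabla\cdot\bu_{h,0}$ forces $\nabla\cdot\bu_{h,0}=0$ on each $T$, while choosing $q_{h,0}=0$ with $q_{h,b}$ arbitrary forces the normal component of $\bu_{h,0}$ to be continuous across interior faces, so $\bu_{h,0}\in H(div^0;\Omega)$. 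A field that is both curl-free and divergence-free with vanishing tangential trace on a simply connected domain must vanish: writing $\bu_{h,0}=\nabla\phi$ with $\phi\in H_0^1(\Omega)$ gives $\|\bu_{h,0}\|^2=-(\phi,\nabla\cdot\bu_{h,0})=0$, hence $\bu_{h,0}=0$. Finally $\bu_{h,b}\times\bn=\bu_{h,0}\times\bn=0$ and $\bu_{h,n}\times\bn=(\nabla\times\bu_{h,0})\times\bn=0$, and since $\bu_{h,b},\bu_{h,n}$ are purely tangential, $\bu_{h,b}=\bu_{h,n}=0$, so $\bu_h=0$.

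I expect the main obstacle to be exactly this passage from the vanishing energy seminorm to $\bu_h=0$: the bilinear form $a$ controls only the weak curl-curl, whose kernel is large, so the divergence information carried by the second equation, together with the continuous de Rham/Friedrichs structure and the topological hypothesis on $\Omega$ already implicit in the well-posedness result of Section~\ref{Section:2}, is indispensable. A secondary technical point requiring care is the justification that the discrete tangential- and normal-trace matchings genuinely upgrade the piecewise polynomial $\bu_{h,0}$ to global $H_0(curl^2;\Omega)$ and $H(div^0;\Omega)$ membership, which is what makes the two integration-by-parts identities legitimate.
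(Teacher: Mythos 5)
Your proposal is correct and follows essentially the same path as the paper: the energy argument, the norm equivalences of Lemmas \ref{normmm} and \ref{normeqva}, the upgrade of $\bu_0$ to $H_0(curl^2;\Omega)$ from the trace matchings, and the use of the second equation with the test choices $q_0=\nabla\cdot\bu_0$ and $q_b=\ljump\bu_0\cdot\bn\rjump$ to get $\bu_0\in H(div^0;\Omega)$ before invoking the curl-free/divergence-free rigidity. The only cosmetic deviation is that you obtain $\nabla\times\bu_0=0$ by integrating $((\nabla\times)^2\bu_0,\bu_0)=0$ by parts directly, whereas the paper first writes $\nabla\times\bu_0=\nabla\phi$ and shows $(\nabla\phi,\nabla\phi)=0$; both rest on the same boundary conditions and the same (implicit) topological assumptions on $\Omega$.
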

\begin{proof}
It suffices to prove that $\bf=0$ implies  $\bu_h=0$ and $p_h=0$ in $\Omega$. 
To this end,  we set $\bv_h=\bu_h$ in (\ref{32}) and $q_h=p_h$ in (\ref{2}), yielding:
$$
\sum_{T\in {\cal T}_h} ((\nabla\times)^2_w\bu_h, (\nabla\times)^2_w\bu_h)_T+ h_T^4(\nabla_w p_h, \nabla_w p_h)_T=0.
$$
This leads to the following equalities:
$$
\sum_{T\in {\cal T}_h}((\nabla\times)^2_w\bu_h, (\nabla\times)^2_w\bu_h)_T=0,\qquad
\sum_{T\in {\cal T}_h}h_T^4(\nabla_w p_h, \nabla_w p_h)_T=0,
$$
which, along with \eqref{normeq} and \eqref{normeqa}, implies:
 $$\|\bu_h\|_{2,h}=0, \qquad \|p_h\|_{1,h}=0.$$ 
Consequently, we have:
\begin{eqnarray} 
(\nabla\times)^2\bu_0&=&0, \quad \text{on each}\ T,\label{tt1}
  \\
  \nabla \times \bu_0\times \bn &=&\bu_n\times \bn,   \quad \text{on each}\ \partial T,\label{tt2}\\
   \bu_0 \times \bn&=&\bu_b\times \bn, \quad \text{on each}\ \partial T,\label{tt3}\\
   \nabla p_0&=&0, \quad \text{on each}\  T,\label{tt4}\\
    p_0&=&p_b, \quad \text{on each}\ \partial T.\label{tt5}
 \end{eqnarray}
From \eqref{tt4}, it follows that $p_0=C$ on each $T\in {\cal T}_h$. Coupled with \eqref{tt5}, this implies that $p_0$ is continuous over the domain $\Omega$ and thus $p_0=C$ throughout  $\Omega$. Given that 
 $p_b=0$ on $\partial\Omega$, we deduce that $p_0=p_b=0$ in $\Omega$, and consequently,  $p_h=0$ in $\Omega$.

It follows from \eqref{tt2} and \eqref{tt3} that $\bu_0\times\bn$ and $\nabla\times \bu_0\times \bn$ are continuous across the interior interface ${\cal E}_h^0$. Thus, we have $\bu_0\in H(curl^2; \Omega)$. This, together with \eqref{tt1}, implies that   $(\nabla \times)^2 \bu_0=0$ in $\Omega$. Therefore, there exists a potential function $\phi$ such that  $\nabla \times \bu_0=\nabla \phi$ in $\Omega$. This gives: 
\begin{equation*}
    \begin{split}
  (\nabla \phi, \nabla \phi)  &= \sum_{T\in {\cal T}_h} (\nabla \times \bu_0, \nabla \phi)_T\\
   &= \sum_{T\in {\cal T}_h} ( \bu_0, \nabla \times\nabla \phi)_T +\langle \nabla \phi, \bn\times \bu_0\rangle_{\partial T} 
    \\&= \sum_{T\in {\cal T}_h} \langle \nabla \phi, \bn\times \bu_b\rangle_{\partial T}\\
   &=  \langle \nabla \phi, \bn\times \bu_b\rangle_{\partial \Omega}\\&=0,
   \end{split}
\end{equation*}
where we used the usual integration by parts, \eqref{tt3} and the fact that  $\bn\times \bu_b=0$ on $\partial\Omega$.
This leads to $\phi=C$ in $\Omega$, and thus $\nabla \times \bu_0=0$ in $\Omega$.  Furthermore, there exists a potential function $\psi$ such that $\bu_0=\nabla \psi$ in $\Omega$. 

Recall that we have established $p_h=0$ in the domain $\Omega$ which implies  $c(p_h,q_h)=0$ in \eqref{2}. From  \eqref{disgradient} and \eqref{2}, we have
\begin{equation}\label{ee1}
\begin{split}
0&=\sum_{T\in {\cal T}_h}  (\nabla_w q_h, \bu_0)_T\\
&=\sum_{T\in {\cal T}_h} -(q_0, \nabla\cdot\bu_0)_T+\langle q_b, \bu_0\cdot\bn\rangle_{\pT}\\
&=\sum_{T\in {\cal T}_h}  -(q_0, \nabla\cdot\bu_0)_T+\sum_{e\in {\cal E}_h^0}\langle q_b, \ljump \bu_0\cdot\bn\rjump \rangle_{e},
\end{split}
\end{equation}
where $\ljump \bu_0\cdot\bn\rjump$ denotes  the jump of $\bu_0\cdot\bn$ across the face  $e\in {\cal E}_h^0$ and we have  used $q_b=0$ on $\partial\Omega$.
By setting $q_0=0$ and $q_b=\ljump \bu_0\cdot\bn \rjump$ in \eqref{ee1}, we obtain that $\ljump \bu_0\cdot\bn \rjump=0$ on $e\in {\cal E}_h^0$,  which means $\bu_0\cdot\bn$ is continuous across  the interior interface $e\in {\cal E}_h^0$. Consequently, $\bu_0\in H(div; \Omega)$. By taking  $q_0=\nabla\cdot \bu_0$ and $q_b=0$  in \eqref{ee1}, we deduce that $\nabla \cdot \bu_0=0$ on each $T$, and hence 
 $\nabla \cdot \bu_0=0$ in $\Omega$ due to $\bu_0\in H(div; \Omega)$. Recall that there exists a potential function $\psi$ such that $\bu_0=\nabla \psi$ in $\Omega$. Thus, the equation $\nabla\cdot\bu_0=\Delta \psi=0$   holds strongly in $\Omega$ with the boundary condition $\nabla \psi \times \bn=\bu_0\times \bn=0$ on $\partial \Omega$. This implies that $\psi=C$ in $\Omega$. Therefore,  $\bu_0=\nabla \psi=0$ in $\Omega$. Using \eqref{tt2} and \eqref{tt3}, we deduce that  $\bu_b\times \bn=0$ and $\bu_n\times \bn=0$ in $\Omega$. Hence, we conclude that $\bu_h=0$ in $\Omega$.

This completes the proof of the theorem. 
\end{proof}

\section{Error Equations}\label{Section:error-equation}
This section aims to derive the error equations for the weak Galerkin method (\ref{32})-(\ref{2}) applied to the quad-curl problem \eqref{model}. These equations are essential for the subsequent convergence analysis.

Let  $k\geq 2$. Let $\bQ_0$ be the $L^2$ projection operator onto $[P_k(T)]^3$. Analogously, for $e\subset\partial T$, denote by $\bQ_b$ and $\bQ_n$ the $L^2$ projection operators onto $[P_{k}(e)]^3$ and $[P_{k-1}(e)]^3$, respectively. For $\bw\in [H(curl; \Omega)]^3$, define the $L^2$ projection $\bQ_h \bw\in V_h$ as follows:
$$
\bQ_h\bw|_T=\{\bQ_0 \bw, \bQ_b \bw\times\b n, \bQ_n(\nabla\times\bw)\times\b n\}.
$$
For $\sigma \in H^1(\Omega)$, the $L^2$ projection $Q_h \sigma\in W_h$ is defined by 
 $$
 Q_h\sigma|_T=\{Q_0 \sigma, Q_b \sigma\},
$$
where $Q_0$ and $Q_b$ are the $L^2$ projection operators onto $P_k(T)$ and $P_k(e)$ respectively.  Denote by ${\cal Q}_h^{r_1}$ and ${\cal Q}_h^{r_2}$  the $L^2$ projection operators onto $P_{r_1}(T)$ and $P_{r_2}(T)$, respectively.

\begin{lemma}\label{Lemma5.1}   The operators   ${\cal Q}^{r_1}_h$ and ${\cal Q}^{r_2}_h$ satisfy the following properties, namely: 
\begin{eqnarray}\label{l}
(\nabla\times)^2_w \bw &=& {\cal Q}_h^{r_1}((\nabla\times)^2 \bw), \qquad  \forall \bw\in H(curl^2; T),\\
\nabla_{w}\sigma &=& {\cal Q}^{r_2}_h(\nabla \sigma),  \qquad \qquad \forall  \sigma\in H^1(T). \label{l-2}
\end{eqnarray}
\end{lemma}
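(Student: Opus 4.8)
The plan is to show that, for a genuinely smooth argument, each discrete weak operator collapses to the corresponding classical operator followed by an $L^2$ projection onto the relevant polynomial space; the two claimed identities \eqref{l}--\eqref{l-2} then read off immediately from the defining orthogonality properties of ${\cal Q}_h^{r_1}$ and ${\cal Q}_h^{r_2}$. Throughout I interpret $\nabla_{w}\sigma$ and $(\nabla\times)^2_{w}\bw$ as the discrete weak actions applied to $\sigma$ and $\bw$ regarded as weak functions whose interior components are $\sigma$ and $\bw$ and whose boundary components are the corresponding traces, namely $\sigma_b=\sigma|_{\partial T}$, $\bv_b\times\bn=\bw\times\bn$, and $\bv_n\times\bn=(\nabla\times\bw)\times\bn$.

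For \eqref{l-2} I would start from the integration-by-parts form \eqref{disgradient*} of the discrete weak gradient, which is valid precisely because $\sigma\in H^1(T)$. Substituting $\sigma_0=\sigma$ and $\sigma_b=\sigma|_{\partial T}$ makes the jump factor $\sigma_0-\sigma_b$ vanish on $\partial T$, so that $(\nabla_{w}\sigma,\bpsi)_T=(\nabla\sigma,\bpsi)_T$ for every $\bpsi\in[P_{r_2}(T)]^3$. Since $\nabla_{w}\sigma\in[P_{r_2}(T)]^3$ by construction, this is exactly the variational characterization of the componentwise $L^2$ projection of $\nabla\sigma$, and uniqueness of that projection gives $\nabla_{w}\sigma={\cal Q}_h^{r_2}(\nabla\sigma)$.

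For \eqref{l} the argument is structurally identical but uses the integration-by-parts form \eqref{discurlcurlnew} of the discrete weak curl-curl, valid because $\bw\in H(curl^2;T)$. Taking $\bv_0=\bw$ together with the trace choices for $\bv_b$ and $\bv_n$ forces both boundary pairings $\langle(\bv_b-\bv_0)\times\bn,\nabla\times\bq\rangle_{\partial T}$ and $\langle(\bv_n-\nabla\times\bv_0)\times\bn,\bq\rangle_{\partial T}$ to vanish, leaving $((\nabla\times)^2_{w}\bw,\bq)_T=((\nabla\times)^2\bw,\bq)_T$ for all $\bq\in[P_{r_1}(T)]^3$. As $(\nabla\times)^2_{w}\bw\in[P_{r_1}(T)]^3$, uniqueness of the $L^2$ projection again yields $(\nabla\times)^2_{w}\bw={\cal Q}_h^{r_1}((\nabla\times)^2\bw)$.

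The only genuinely delicate point is justifying that the boundary terms really drop out, i.e.\ that the traces of $\sigma$, of $\bw\times\bn$, and of $(\nabla\times\bw)\times\bn$ are well defined and that the identities \eqref{disgradient*} and \eqref{discurlcurlnew} hold under the stated regularity $\sigma\in H^1(T)$ and $\bw\in H(curl^2;T)$; but these are exactly the hypotheses under which those two identities were derived earlier, so no additional work is required. I would stress that this cancellation is \emph{exact}, so the result holds for arbitrary $r_1$ and $r_2$. This matters because the values $r_1=2N+k-2$ and $r_2=2N+k-1$ used in the bubble-function estimates are far larger than $k$; an argument routed through the projected weak functions $\bQ_h\bw$ and $Q_h\sigma$ would instead impose degree constraints tying $r_1,r_2$ to $k$ and would not apply in the present setting.
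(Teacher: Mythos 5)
Your proposal is correct and follows essentially the same route as the paper: both identify $\sigma$ and $\bw$ with weak functions whose boundary components are the actual traces, so the boundary pairings in \eqref{disgradient*} and \eqref{discurlcurlnew} vanish identically, and the claimed identities then follow from the defining orthogonality of the $L^2$ projections. No gap.
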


\begin{proof}
For any $\bw\in H(curl^2; T)$, it follows from \eqref{discurlcurlnew} that 
\begin{equation*}  
\begin{split}
&((\nabla \times)^2_{w} \bw, \bq)_T\\=&((\nabla\times)^2 \bw,   \bq)_T-\langle (\bw|_{\partial T}-\bw|_T)\times\bn, \nabla\times\bq\rangle_{\partial T} \\
&\qquad -\langle ( (\nabla\times \bw)|_{\partial T}-\nabla\times(\bw|_T))\times\bn, \bq\rangle_{\partial T} 
\\=&((\nabla\times)^2 \bw,   \bq)_T \\
=&({\cal Q}_h^{r_1}((\nabla\times)^2 \bw), \bq)_T,
\end{split} 
\end{equation*}
for any $\bq\in [P_{r_1}(T)]^3$. 
This completes the proof of \eqref{l}. 

For any $\sigma\in H^1(T)$, using \eqref{disgradient*} gives
\begin{equation*} 
 \begin{split}
     (\nabla_{w} \sigma, \boldsymbol{\psi})_T=& (\nabla \sigma, \boldsymbol{\psi})_T-\langle \sigma|_T- \sigma|_{\partial T}, \boldsymbol{\psi} \cdot \textbf{n}\rangle_{\partial T}\\
     =& (\nabla \sigma, \boldsymbol{\psi})_T=({\cal Q}_h^{r_2} \nabla \sigma, \boldsymbol{\psi})_T,
 \end{split} 
 \end{equation*}
 for any $\boldsymbol{\psi}\in [P_{r_2} (T)]^3$. This completes the proof of \eqref{l-2}. 
 
\end{proof}

 Let $(\bu, p)$ be the solution  of \eqref{weakform} and assume that $\bu\in H(curl^4; \Omega)$. Then $(\bu, p)$  satisfies
 \begin{eqnarray} \label{mo1}
 ((\nabla\times)^4\bu, \bv)+(\bv, \nabla p)&=&(\bf, \bv),\\
 (\nabla\cdot\bu, q)&=&0, \label{mo2}
 \end{eqnarray}
for $\bv \in [L^2(\Omega)]^3$ and $q\in L^2(\Omega)$. Let $(\bu_h, p_h)$ be the WG solutions of \eqref{32}-\eqref{2}. Define the error functions $\be_h$ and $\epsilon_h$ by 
\begin{eqnarray}\label{error}
\be_h&=&\bu-
\bu_h,\\
\epsilon_h&=&p-p_h. \label{error-2}
\end{eqnarray}
 \begin{lemma}\label{errorequa}
Let $\bu\in H(curl^4; \Omega)$ be the exact solution of the quad-curl model problem \eqref{model}, and let $(\bu_h; p_h) \in V_h^0\times W_h^0$ be the numerical solution obtained from the WG scheme (\ref{32})-(\ref{2}). The error functions $\be_h$ and $\epsilon_h$, defined in (\ref{error})-(\ref{error-2}), satisfy the following error equations, namely:
\begin{eqnarray}\label{sehv}
a(\be_h, \bv_h)+b(\bv_h, \epsilon_h)&=& \ell_1(\bu, \bv_h),\quad   \forall \bv_h\in V_{h}^0,\\
c(\epsilon_h, q_h)-b(\be_h, q_h)&=&-\ell_2(\bu, q_h),\quad\forall q_h\in W^0_h. \label{sehv2}
\end{eqnarray}
\end{lemma}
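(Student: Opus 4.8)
The plan is to derive the two error equations by testing the continuous problem \eqref{mo1}--\eqref{mo2} against discrete test functions, rewriting the resulting volume integrals in terms of the discrete weak operators, and then subtracting the WG scheme \eqref{32}--\eqref{2}. The key algebraic identity I will exploit is Lemma \ref{Lemma5.1}, which commutes the discrete weak operators with the $L^2$ projections; this is what lets me convert integrals involving the exact solution $\bu$ into integrals involving $\bQ_h\bu$ that match the scheme.

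First I would treat the curl-curl equation. Starting from \eqref{mo1}, I take $\bv=\bv_0$ for an arbitrary $\bv_h=\{\bv_0,\bv_b,\bv_n\}\in V_h^0$ and integrate the term $((\nabla\times)^4\bu,\bv_0)$ by parts twice over each element $T$. Integration by parts will produce the volume term $((\nabla\times)^2\bu,(\nabla\times)^2\bv_0)_T$ together with boundary terms on $\partial T$ of the form $\langle(\nabla\times)^2\bu\times\bn,\nabla\times\bv_0\rangle$ and $\langle(\nabla\times)^3\bu\times\bn,\bv_0\rangle$. I then rewrite the volume term using the definition \eqref{discurlcurl}/\eqref{discurlcurlnew} of $(\nabla\times)^2_w$ applied to $\bQ_h\bu$, invoking Lemma \ref{Lemma5.1} so that $(\nabla\times)^2_w(\bQ_h\bu)={\cal Q}_h^{r_1}((\nabla\times)^2\bu)$. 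Comparing with $a(\bQ_h\bu,\bv_h)=((\nabla\times)^2_w\bQ_h\bu,(\nabla\times)^2_w\bv_h)$ and collecting the leftover boundary terms defines the functional $\ell_1(\bu,\bv_h)$. Since $\be_h=\bQ_h\bu-\bu_h+(\bu-\bQ_h\bu)$, I subtract the first scheme equation \eqref{32} and use $b(\bv_h,Q_h p)=b(\bv_h,p_h)+\dots$; because the exact $p=0$ by the theorem in Section \ref{Section:2}, the pressure-coupling term is handled cleanly. The boundary terms that survive, rewritten via the projections $\bQ_b,\bQ_n$ and the boundary conditions in $V_h^0$, constitute $\ell_1$.

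Second I would handle the divergence constraint. Starting from \eqref{mo2} with $q=q_0$ for $q_h=\{q_0,q_b\}\in W_h^0$, I integrate $(\nabla\cdot\bu,q_0)_T$ by parts to get $-(\bu,\nabla q_0)_T$ plus a boundary term $\langle\bu\cdot\bn,q_0\rangle_{\partial T}$. I recast $-(\bu,\nabla q_0)$ using \eqref{disgradient}/\eqref{disgradient*} and Lemma \ref{Lemma5.1} to match $b(\bu_h,q_h)=(\bu_0,\nabla_w q_h)$; the mismatch terms, after inserting $Q_b q$ and using $q_b=0$ on $\partial\Omega$, give $\ell_2(\bu,q_h)$. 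Combining with the second scheme equation \eqref{2} and noting $c(p,q_h)=0$ since $p=0$ yields \eqref{sehv2}.

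The main obstacle I anticipate is the careful bookkeeping of the boundary terms: I must verify that the integration-by-parts boundary integrals for the exact solution vanish or telescope correctly across interior faces (using the $H(curl^2;\Omega)$ and $H(div^0;\Omega)$ regularity of $\bu$ so that $\bu\times\bn$, $\nabla\times\bu\times\bn$, and $\bu\cdot\bn$ are single-valued on interior faces, and the homogeneous boundary conditions on $\partial\Omega$), and that the remaining terms are exactly absorbable into the projection operators $\bQ_b,\bQ_n,Q_b$ so that $\ell_1,\ell_2$ depend only on the projection errors. Getting the signs and the tangential-component pairings $\times\bn$ to line up between the twice-integrated curl-curl term and the definition \eqref{discurlcurlnew} is the delicate part; everything else is a routine application of Lemma \ref{Lemma5.1} and the fact that $p=0$.
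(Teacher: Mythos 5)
Your overall strategy---test \eqref{mo1}--\eqref{mo2} with $\bv_0$ and $q_0$, integrate by parts elementwise, convert the volume terms into discrete weak operators via Lemma \ref{Lemma5.1}, and subtract the scheme---is the same as the paper's. However, there is a concrete error in the pivotal step. You invoke Lemma \ref{Lemma5.1} in the form $(\nabla\times)^2_w(\bQ_h\bu)={\cal Q}_h^{r_1}((\nabla\times)^2\bu)$. That is not what the lemma says, and it is false in this stabilizer-free setting. The lemma asserts $(\nabla\times)^2_w\bw={\cal Q}_h^{r_1}((\nabla\times)^2\bw)$ for $\bw\in H(curl^2;T)$ itself, viewed as a weak function with its own consistent traces, so that both boundary terms in \eqref{discurlcurlnew} vanish identically. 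If you replace $\bw$ by $\bQ_h\bw$, those boundary terms do not vanish, and the volume term $(\bQ_0\bw,(\nabla\times)^2\bq)_T$ does not reduce to $(\bw,(\nabla\times)^2\bq)_T$ either, because the test polynomials $\bq$ range over $[P_{r_1}(T)]^3$ with $r_1=2N+k-2>k$ (this is precisely the price of removing the stabilizer via bubble functions). The classical commuting identity $\nabla_w Q_h={\cal Q}_h\nabla$ that you are implicitly relying on requires matching polynomial degrees and does not hold here.

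This matters because the lemma's error function is $\be_h=\bu-\bu_h$, not $\bQ_h\bu-\bu_h$. The paper accordingly computes $a(\bu,\bv_h)$ directly: by \eqref{l}, $(\nabla\times)^2_w\bu={\cal Q}_h^{r_1}((\nabla\times)^2\bu)$, and then \eqref{discurlcurlnew} applied to the test function $\bv_h$ with $\bq={\cal Q}_h^{r_1}((\nabla\times)^2\bu)$ produces exactly the boundary pairings that combine with the integration-by-parts boundary terms of \eqref{eq2} into the $({\cal Q}_h^{r_1}-I)$ differences defining $\ell_1$. Your detour through $\bQ_h\bu$ and the splitting $\be_h=(\bQ_h\bu-\bu_h)+(\bu-\bQ_h\bu)$ would at best yield an error equation for $\bQ_h\bu-\bu_h$ whose right-hand side carries additional projection-error boundary terms such as $(\bQ_b\bu-\bQ_0\bu)\times\bn$; that is a different identity from the stated one (such terms do appear later, in the duality argument of Section 8, but not in this lemma). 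The same remark applies, more mildly, to the second equation: the paper keeps $\bu$ throughout and inserts ${\cal Q}_h^{r_2}$ only inside the pairing $(\cdot,\nabla_w q_h)_T$, which is legitimate because $\nabla_w q_h\in[P_{r_2}(T)]^3$; no insertion of $Q_b q$ or $Q_h p$ is needed, and since $p=0$ the pressure terms are handled as you describe.
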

Here
 \begin{eqnarray*}\label{lu}
\ell_1(\bu, \bv_h)&=&\sum_{T\in{\cal T}_h}\langle (\bv_0-\bv_b)\times\bn, \nabla\times({\cal Q}_h^{r_1} -I)((\nabla\times)^2\bu)\rangle_{\partial T} \\&&+\langle (\nabla \times \bv_0-\bv_n)\times\bn, ({\cal Q}_h^{r_1} -I)((\nabla\times)^2\bu)\rangle_{\partial T},\\
  \ell_2(\bu, q_h)&=&  \sum_{T\in{\cal T}_h} \langle q_0-q_b, (I-{\cal Q}_h^{r_2})\bu \cdot\bn\rangle_{\pT}.
\end{eqnarray*}

\begin{proof} Using \eqref{l},  and setting $\bq={\cal Q}_h^{r_1} ((\nabla\times)^2\bu)$ in \eqref{discurlcurlnew}, we obtain:
\begin{equation}\label{eq1}
\begin{split}
&\sum_{T\in {\cal T}_h}((\nabla\times)^2_w\bu,  (\nabla\times)^2_w \bv_h)_T\\=& \sum_{T\in {\cal T}_h}({\cal Q}_h^{r_1} ((\nabla\times)^2\bu), (\nabla\times)^2_w \bv_h)_T\\
=& \sum_{T\in {\cal T}_h}((\nabla\times)^2 \bv_0,   {\cal Q}_h^{r_1} ((\nabla\times)^2\bu))_T-\langle (\bv_b-\bv_0)\times\bn, \nabla\times{\cal Q}_h^{r_1} ((\nabla\times)^2\bu)\rangle_{\partial T}\\&-\langle (\bv_n-\nabla\times\bv_0)\times\bn, {\cal Q}_h^{r_1} ((\nabla\times)^2\bu)\rangle_{\partial T} \\ 
=&\sum_{T\in {\cal T}_h} ((\nabla\times)^2 \bv_0,     ((\nabla\times)^2\bu))_T-\langle (\bv_b-\bv_0)\times\bn, \nabla\times{\cal Q}_h^{r_1} ((\nabla\times)^2\bu)\rangle_{\partial T}\\&-\langle (\bv_n-\nabla\times\bv_0)\times\bn, {\cal Q}_h^{r_1} ((\nabla\times)^2\bu)\rangle_{\partial T}.
\end{split}
\end{equation}
Taking $\bv=\bv_0$ in \eqref{mo1}, where $\bv_h=\{\bv_0, \bv_b, \bv_n\} \in V_h^0$ and applying the usual integration by parts, we obtain:
\begin{equation}\label{eq2}
\begin{split}
&\sum_{T\in {\cal T}_h}((\nabla\times)^2\bu, (\nabla\times)^2\bv_0)_T+\langle (\nabla\times)^3\bu, (\bv_0-\bv_b)\times\bn\rangle_{\pT}\\&+\langle (\nabla\times)^2\bu, \nabla\times\bv_0\times\bn-\bv_n\times\bn\rangle_{\pT}+(\nabla p, \bv_0)_T=\sum_{T\in {\cal T}_h} (\bf, \bv_0)_T,
\end{split}
\end{equation}
where we used the facts that
$$
\sum_{T\in {\cal T}_h} \langle (\nabla\times)^2\bu, \bv_n\times\bn\rangle_{\pT}=\langle (\nabla\times)^2\bu, \bv_n\times\bn\rangle_{\partial\Omega}=0,
$$
$$
\sum_{T\in {\cal T}_h} \langle (\nabla\times)^3\bu, \bv_b\times\bn\rangle_{\pT}=\langle (\nabla\times)^3\bu, \bv_b\times\bn\rangle_{\partial\Omega}=0.
$$
Substituting \eqref{eq2} into \eqref{eq1}  gives
\begin{equation}\label{ee3} \begin{aligned} 
& \quad \
\sum_{T\in {\cal T}_h}((\nabla\times)^2_w\bu,  (\nabla\times)^2_w \bv_h)_T\\
&=\sum_{T\in {\cal T}_h}(\bf-\nabla p, \bv_0)_T +\langle (\bv_0-\bv_b)\times\bn, \nabla\times({\cal Q}_h^{r_1} -I)((\nabla\times)^2\bu)\rangle_{\partial T}\\
&\quad \ +\langle (\nabla \times \bv_0-\bv_n)\times\bn, ({\cal Q}_h^{r_1} -I)((\nabla\times)^2\bu)\rangle_{\partial T}.
\end{aligned}
\end{equation}
It follows from \eqref{l-2} that
\begin{equation}\label{eq4}
\begin{split}
b(\bv_h, p)=\sum_{T\in {\cal T}_h}(\nabla_wp, \bv_0)_T=\sum_{T\in {\cal T}_h}({\cal Q}_h^{r_2}(\nabla p), \bv_0)_T=\sum_{T\in {\cal T}_h}(\nabla p, \bv_0)_T.
\end{split}
\end{equation}
Combining \eqref{ee3}-\eqref{eq4} gives
\begin{equation*} 
\begin{split}
&a(\bu, \bv_h)+b(\bv_h, p)\\
=& \sum_{T\in {\cal T}_h}(\bf, \bv_0)_T +\langle (\bv_0-\bv_b)\times\bn, \nabla\times({\cal Q}_h^{r_1} -I)((\nabla\times)^2\bu)\rangle_{\partial T}\\&+\langle (\nabla \times \bv_0-\bv_n)\times\bn, ({\cal Q}_h^{r_1} -I)((\nabla\times)^2\bu)\rangle_{\partial T}.
\end{split}
\end{equation*}
Subtracting \eqref{32} from the above equation gives \eqref{sehv}.

  To derive \eqref{sehv2}, we take 
 $q=q_0$ in \eqref{mo2} and apply  the usual integration by parts,   yielding:
\begin{equation}\label{eq5}
0=-\sum_{T\in {\cal T}_h} (\bu, \nabla q_0)+\sum_{T\in {\cal T}_h} \langle \bu\cdot\bn, q_0-q_b\rangle_{\pT},
\end{equation}
where we used $\sum_{T\in {\cal T}_h} \langle \bu\cdot\bn,  q_b\rangle_{\pT}=\langle \bu\cdot\bn,  q_b\rangle_{\partial\Omega}=0$ due to $q_b=0$ on $\partial\Omega$.

Recall that $p=0$, which implies $c(p,q_h)=0$. Using \eqref{disgradient*} and applying the usual integration by parts, we obtain: 
\begin{equation*} \label{beq}
\begin{split}
c(p,q_h)-b(\bu, q_h) =&\sum_{T\in {\cal T}_h}  -(\bu, \nabla_w q_h)_T\\
=& \sum_{T\in {\cal T}_h}  -({\cal Q}_h^{r_2}\bu, \nabla_w q_h)_T\\
=& \sum_{T\in {\cal T}_h} -(\nabla q_0,  {\cal Q}_h^{r_2}\bu)_T+\langle q_0-q_b,  ({\cal Q}_h^{r_2}\bu)\cdot\bn\rangle_{\pT}\\
=& \sum_{T\in {\cal T}_h} -(\nabla q_0, \bu)_T+\langle q_0-q_b, {\cal Q}_h^{r_2}\bu\cdot\bn\rangle_{\pT}\\
=& \sum_{T\in {\cal T}_h}\langle q_0-q_b, ({\cal Q}_h^{r_2}-I)\bu\cdot\bn\rangle_{\pT},
\end{split}
\end{equation*}
where we used \eqref{eq5} on the last line. 

Subtracting \eqref{2} from the above equation completes the proof of 
\eqref{sehv2}.

This completes the proof of the lemma. 
\end{proof}

%

\section{Error Estimates}\label{Section:error-estimates}
This section aims to establish the error estimate in an energy norm for the weak Galerkin finite element method \eqref{32}-\eqref{2} applied to the quad-curl model problem \eqref{model}.

\begin{lemma}\label{lem2}  Let  $k\geq 2$ and $s\in [1, k]$.
Suppose $\bu\in [H^{k+1}(\Omega)]^3$ and $(\nabla\times)^2\bu\in [H^{k-1}(\Omega)]^3$. Then, for $\bu\in V_h$, $\bv\in V_h$ and $q \in  W_h$, the following estimates hold true; i.e.,
\begin{eqnarray}\label{error1}
|\ell_1(\bu, \bv)|&\leq&    Ch^{s-1}\|(\nabla\times)^2\bu\|_{s-1}\|\bv\|_{2,h},\label{error2}\\
|\ell_2(\bu, q)| &\leq& Ch^{s+1}\|\bu\|_{s+1}\| q\|_{1,h}.\label{error3}
\end{eqnarray}
\end{lemma}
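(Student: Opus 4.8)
The plan is to regard both bounds as consistency estimates in which each functional $\ell_i$ is a sum over element boundaries $\partial T$ pairing a quantity built from the discrete argument (controlled, after suitable weighting, by $\|\bv\|_{2,h}$ or $\|q\|_{1,h}$) against a projection error. On each $T$ I would apply the Cauchy--Schwarz inequality face by face, insert a balancing power of $h_T$ so that the discrete factor carries exactly the weight it receives in \eqref{disnorm} or \eqref{disnorma}, apply the discrete Cauchy--Schwarz inequality over $\mathcal T_h$, and finally estimate the projection-error factor by combining the trace inequality \eqref{trace-inequality} with the $L^2$-projection approximation estimates $\|(I-{\cal Q}_h^{r})\bw\|_{m,T}\le Ch_T^{t-m}\|\bw\|_{t,T}$. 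The degrees $r_1=2N+k-2$ and $r_2=2N+k-1$ from Lemmas \ref{norm1a} and \ref{n2} are large enough that these approximation estimates are available up to the Sobolev orders required here.

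For \eqref{error2} I would split $\ell_1$ into its two boundary sums. In the first I pair $\|(\bv_0-\bv_b)\times\bn\|_{\pT}$, which enters $\|\bv\|_{2,h}$ with weight $h_T^{-3/2}$, against $\|\nabla\times({\cal Q}_h^{r_1}-I)((\nabla\times)^2\bu)\|_{\pT}$ carrying the complementary weight $h_T^{3/2}$; in the second I pair $\|(\nabla\times\bv_0-\bv_n)\times\bn\|_{\pT}$ (weight $h_T^{-1/2}$) against $\|({\cal Q}_h^{r_1}-I)((\nabla\times)^2\bu)\|_{\pT}$ (weight $h_T^{1/2}$). Writing $\bw=(\nabla\times)^2\bu$ and applying \eqref{trace-inequality} to the non-polynomial errors $({\cal Q}_h^{r_1}-I)\bw$ and $\nabla\times({\cal Q}_h^{r_1}-I)\bw$, each face contribution reduces to $Ch_T^{2s-2}\|\bw\|_{s-1,T}^2$, so summation and a square root yield $Ch^{s-1}\|(\nabla\times)^2\bu\|_{s-1}\|\bv\|_{2,h}$.

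For \eqref{error3} there is a single boundary sum $\sum_T\langle q_0-q_b,(I-{\cal Q}_h^{r_2})\bu\cdot\bn\rangle_{\pT}$. The naive Cauchy--Schwarz pairing of $\|q_0-q_b\|_{\pT}$ against the projection-error trace, bounded via \eqref{trace-inequality} and $\|(I-{\cal Q}_h^{r_2})\bu\|_{\pT}\le Ch_T^{s+1/2}\|\bu\|_{s+1,T}$, already controls this sum in terms of $\|\bu\|_{s+1}\|q\|_{1,h}$. To sharpen the order I would additionally exploit the $L^2(T)$-orthogonality $((I-{\cal Q}_h^{r_2})\bu,\bphi)_T=0$ for all $\bphi\in[P_{r_2}(T)]^3$: since $\nabla q_0\in[P_{k-1}(T)]^3\subset[P_{r_2}(T)]^3$, integrating the $q_0$-contribution by parts converts $\langle q_0,(I-{\cal Q}_h^{r_2})\bu\cdot\bn\rangle_{\pT}$ into the interior term $(q_0,\nabla\cdot(I-{\cal Q}_h^{r_2})\bu)_T$ with the gradient pairing annihilated, and together with $\nabla\cdot\bu=0$ this is what lets the weights built into $\|q\|_{1,h}$ (namely $h_T^2\|\nabla q_0\|_T$ and $h_T^{3/2}\|q_0-q_b\|_{\pT}$) be matched against the projection error to produce the asserted estimate in terms of $\|\bu\|_{s+1}$ and $\|q\|_{1,h}$.

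The main obstacle is the boundary projection-error bound for the curl term in $\ell_1$ and, for $\ell_2$, the precise balancing of powers against the unusual weights $h_T^4$ and $h_T^3$ appearing in $\|q\|_{1,h}$. In $\ell_1$, because $({\cal Q}_h^{r_1}-I)\bw$ is not a polynomial, \eqref{trace-inequality} forces control of $\nabla\times({\cal Q}_h^{r_1}-I)\bw$ both in $L^2(T)$ and in its first-order seminorm, i.e. of the projection error up to its $H^2$-seminorm; this is where the regularity $(\nabla\times)^2\bu\in H^{s-1}$ is essential, and the lowest orders $s=1,2$ need separate handling, for instance by splitting off the polynomial part ${\cal Q}_h^{r_1}\bw$ and using the inverse trace inequality \eqref{trace} on it so that at most one derivative falls on the genuinely non-polynomial factor. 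Once these projection-error traces are controlled with the correct powers of $h_T$, both estimates follow by summation and discrete Cauchy--Schwarz.
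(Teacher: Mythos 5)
Your treatment of $\ell_1$ is essentially the paper's own argument: weighted Cauchy--Schwarz with the exponents $h_T^{\pm 3/2}$ and $h_T^{\pm 1/2}$ matched to $\|\bv\|_{2,h}$, followed by the trace inequality \eqref{trace-inequality} and $L^2$-projection approximation on $({\cal Q}_h^{r_1}-I)((\nabla\times)^2\bu)$ and its curl. The caveat you raise about $s=1,2$ (the trace/approximation step formally requires two derivatives of $({\cal Q}_h^{r_1}-I)(\nabla\times)^2\bu$) is real and is glossed over in the paper as well; your fix of splitting off the polynomial part and using \eqref{trace} on it is the standard repair.

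For $\ell_2$ there is a genuine problem with your sharpening step, and it is worth noting that the paper does not achieve the exponent $s+1$ either: the paper's proof is exactly your ``naive'' pairing ($h_T^{3/2}\|q_0-q_b\|_{\pT}$ against $h_T^{-3/2}\|(I-{\cal Q}_h^{r_2})\bu\cdot\bn\|_{\pT}$), and its final displayed line reads $Ch^{s-1}\|\bu\|_{s+1}\|q\|_{1,h}$, which is all that Theorem \ref{thm} uses. Your proposed route to $h^{s+1}$ does not close the gap: after integrating the $q_0$-contribution by parts and killing $(\nabla q_0,(I-{\cal Q}_h^{r_2})\bu)_T$ by orthogonality, you are left with $(q_0,\nabla\cdot(I-{\cal Q}_h^{r_2})\bu)_T=-(q_0,\nabla\cdot({\cal Q}_h^{r_2}\bu))_T$ (using $\nabla\cdot\bu=0$), and this term involves $\|q_0\|_T$, which the seminorm $\|q\|_{1,h}$ --- built only from $h_T^{2}\|\nabla q_0\|_T$ and $h_T^{3/2}\|q_0-q_b\|_{\pT}$ --- does not control; inserting a discrete Poincar\'e inequality costs a factor $h^{-2}$ and makes the estimate worse, not better. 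The residual $q_b$-sum likewise does not telescope away, since the interface jump of ${\cal Q}_h^{r_2}\bu\cdot\bn$ is only $O(h^{s+1/2})$ per face and must still be paired with something involving $q_0$ or $q_b$ in full. So you should either prove \eqref{error3} with the exponent $s-1$ (which is what the naive pairing gives and what the energy estimate actually requires) or acknowledge that the exponent $s+1$ as stated is not reachable by this family of arguments.
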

\begin{proof}
It follows from  Cauchy-Schwarz inequality and the trace inequality \eqref{trace-inequality} that 
\begin{equation*}
\begin{split}
&|\ell_1(\bu, \bv)| \\=&|\sum_{T\in{\cal T}_h}\langle (\bv_0-\bv_b)\times\bn, \nabla\times({\cal Q}_h^{r_1} -I)((\nabla\times)^2\bu)\rangle_{\partial T} \\&+\langle (\nabla \times \bv_0-\bv_n)\times\bn, ({\cal Q}_h^{r_1} -I)((\nabla\times)^2\bu)\rangle_{\partial T}|
\\
\leq &  \big(\sum_{T\in{\cal T}_h}h_T^{-3}\| (\bv_0-\bv_b)\times\bn\|^2_{\partial T}\big)^{\frac{1}{2}} \big(\sum_{T\in{\cal T}_h}h_T^3\|\nabla\times({\cal Q}_h^{r_1} -I)((\nabla\times)^2\bu)\|^2_{\partial T}\big)^{\frac{1}{2}} \\&+\big(\sum_{T\in{\cal T}_h}h_T^{-1}\|(\nabla \times \bv_0-\bv_n)\times\bn\|^2_{\partial T}\big)^{\frac{1}{2}} \big(\sum_{T\in{\cal T}_h}h_T \| ({\cal Q}_h^{r_1} -I)((\nabla\times)^2\bu)\|^2_{\partial T}\big)^{\frac{1}{2}}  
\\
\leq &\Big( \big(\sum_{T\in{\cal T}_h}h_T^2\|\nabla\times({\cal Q}_h^{r_1} -I)((\nabla\times)^2\bu)\|^2_{T}+h_T^4\|\nabla\times({\cal Q}_h^{r_1} -I)((\nabla\times)^2\bu)\|^2_{1, T}\big)^{\frac{1}{2}} \\&+ \big(\sum_{T\in{\cal T}_h} \|({\cal Q}_h^{r_1} -I)((\nabla\times)^2\bu)\|^2_{T}+h_T^2\|({\cal Q}_h^{r_1} -I)((\nabla\times)^2\bu)\|^2_{1, T}\big)^{\frac{1}{2}}\Big) \|\bv\|_{2,h}\\
\leq &  Ch^{s-1} \|(\nabla\times)^2 \bu\|_{s-1} \|\bv\|_{2,h}.
\end{split}
\end{equation*}

Similarly, using  Cauchy-Schwarz inequality and the trace inequality \eqref{trace-inequality}  gives
\begin{equation*}
\begin{split}
|\ell_2(\bu, q)|&=|\sum_{T\in{\cal T}_h}\langle q_0-q_b, (I-{\cal Q}_h^{r_2})\bu \cdot\bn\rangle_{\pT}|\\
&\leq \Big(\sum_{T\in{\cal T}_h}  h_T^{3} \|q_0-q_b\|_{\pT}^2\Big)^{\frac{1}{2}}\Big(\sum_{T\in{\cal T}_h}h_T^{-3}  \|(I-{\cal Q}_h^{r_2})\bu \cdot\bn\|_{\pT}^2\Big)^{\frac{1}{2}}\\
&\leq \Big(\sum_{T\in{\cal T}_h} h_T^{-4} \|(I-{\cal Q}_h^{r_2})\bu \cdot\bn\|_{T}^2+ h_T^{-2}\|(I-{\cal Q}_h^{r_2})\bu \cdot\bn\|_{1,T}^2\Big)^{\frac{1}{2}}\| q\|_{1,h} \\
&\leq Ch^{s-1}\|\bu\|_{s+1}\| q\|_{1,h}.
  \end{split}
\end{equation*}

This completes the proof of the lemma.
\end{proof}

\begin{theorem}\label{thm}
Let $k\geq 2$. Assume the exact solution of the quad-curl model problem \eqref{model} is sufficiently regular such that $\bu\in [H^{k+1}(\Omega)]^3$ and $(\nabla\times)^2\bu\in [H^{k-1}(\Omega)]^3$. Let $(\bu_h, p_h)\in V_h^0\times W_h^0$ be WG approximations obtained from the stabilizer free WG algorithm \eqref{32}-\eqref{2}. The error functions $\be_h$ and $\epsilon_h$, defined \eqref{error} and \eqref{error-2} satisfy the following error estimate:
 \begin{equation}\label{estimate1}
\3bar \be_h\3bar_{V_h}+\3bar \epsilon_h \3bar_{W_h} \leq Ch^{k-1} (\|\bu\|_{k+1}+\|(\nabla \times)^2\bu\|_{k-1}).
\end{equation} 
\end{theorem}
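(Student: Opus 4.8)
The plan is to run the standard energy argument built on the error equations of Lemma \ref{errorequa}. First I would test \eqref{sehv} with $\bv_h=\be_h$ and \eqref{sehv2} with $q_h=\epsilon_h$; this is legitimate since $p=0$ forces $\epsilon_h=-p_h\in W_h^0$, and the functionals $a,b,c,\ell_1,\ell_2$ see $\be_h$ only through its weak curl-curl action and through the tangential trace jumps $(\bv_0-\bv_b)\times\bn$ and $(\nabla\times\bv_0-\bv_n)\times\bn$, all of which reduce to discrete quantities because the traces of the exact solution $\bu$ are single-valued. Adding the two tested equations, the mixed term $b(\be_h,\epsilon_h)$ appears with opposite signs and cancels, leaving the energy identity $\3bar\be_h\3bar_{V_h}^2+\3bar\epsilon_h\3bar_{W_h}^2=\ell_1(\bu,\be_h)-\ell_2(\bu,\epsilon_h)$, where I have used $a(\be_h,\be_h)=\3bar\be_h\3bar_{V_h}^2$ and $c(\epsilon_h,\epsilon_h)=\3bar\epsilon_h\3bar_{W_h}^2$ from the definitions of the bilinear forms and of the energy norms.

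Next I would bound the right-hand side consistency functionals by invoking Lemma \ref{lem2} with the choice $s=k$, which yields $|\ell_1(\bu,\be_h)|\le Ch^{k-1}\|(\nabla\times)^2\bu\|_{k-1}\|\be_h\|_{2,h}$ and $|\ell_2(\bu,\epsilon_h)|\le Ch^{k+1}\|\bu\|_{k+1}\|\epsilon_h\|_{1,h}$. To close the loop I convert the two discrete Sobolev norms back into energy norms using the equivalences $\|\be_h\|_{2,h}\le C\3bar\be_h\3bar_{V_h}$ from \eqref{normeq} (Lemma \ref{normeqva}) and $\|\epsilon_h\|_{1,h}\le C\3bar\epsilon_h\3bar_{W_h}$ from \eqref{normeqa} (Lemma \ref{normmm}). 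Substituting these into the energy identity gives $\3bar\be_h\3bar_{V_h}^2+\3bar\epsilon_h\3bar_{W_h}^2\le Ch^{k-1}\|(\nabla\times)^2\bu\|_{k-1}\,\3bar\be_h\3bar_{V_h}+Ch^{k+1}\|\bu\|_{k+1}\,\3bar\epsilon_h\3bar_{W_h}$.

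Finally, since the right-hand side is now linear in the energy norms, a Cauchy-Schwarz step on the pair $(\3bar\be_h\3bar_{V_h},\3bar\epsilon_h\3bar_{W_h})$ (equivalently, dividing the quadratic inequality by $(\3bar\be_h\3bar_{V_h}^2+\3bar\epsilon_h\3bar_{W_h}^2)^{1/2}$) produces $\3bar\be_h\3bar_{V_h}+\3bar\epsilon_h\3bar_{W_h}\le C\big(h^{k-1}\|(\nabla\times)^2\bu\|_{k-1}+h^{k+1}\|\bu\|_{k+1}\big)$, and absorbing $h^{k+1}\le h^{k-1}$ for $h\le 1$ yields exactly \eqref{estimate1}.

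I expect the substance of the argument to reside entirely in the consistency estimate of Lemma \ref{lem2} rather than in this theorem; here the only points requiring care are bookkeeping ones: verifying that the $b(\cdot,\cdot)$ terms cancel with the correct signs, keeping the two differing powers $h^{k-1}$ and $h^{k+1}$ straight so that the weaker one is absorbed, and --- the genuinely delicate step --- justifying that $\be_h$ may be inserted as a test function and measured by the equivalence of Lemma \ref{normeqva}. This last point rests on the observation that the trace contributions of the smooth exact solution are continuous and therefore drop out, so that every term actually evaluated on $\be_h$ is controlled by the discrete norms and the lemmas apply.
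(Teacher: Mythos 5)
Your proposal is correct and follows the paper's proof essentially verbatim: test \eqref{sehv} with $\be_h$ and \eqref{sehv2} with $\epsilon_h$, add so the $b$-terms cancel, invoke Lemma \ref{lem2} with $s=k$, and close with the norm equivalences \eqref{normeq} and \eqref{normeqa}. The only difference is that you spell out the final absorption step and the justification for inserting $\be_h$ as a test function, both of which the paper leaves implicit.
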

\begin{proof}
Letting $\bv_h=\be_h$ in (\ref{sehv}) and $q_h=\epsilon_h$ in (\ref{sehv2}), and then adding the two equations, we obtain: 
\begin{equation*}
\begin{split}
\3bar \be_h\3bar^2_{V_h}+\3bar \epsilon_h\3bar^2_{W_h}&=\ell_1(\bu, \be_h)-\ell_2(\bu, \epsilon_h).
\end{split}
\end{equation*} 
Using Lemma \ref{lem2} with $s=k$, and applying \eqref{normeq} and \eqref{normeqa}, completes the proof of the theorem.
\end{proof}

\section{$L^2$ Error Estimates}
This section aims to establish the error estimate in an 
$L^2$ norm for the weak Galerkin   finite element method \eqref{32}-\eqref{2} applied to the quad-curl model problem \eqref{model}.
 To this end,  we consider an auxiliary problem of finding $(\bphi; \xi)$ such that:
\begin{equation}\label{dual}
\begin{split}
(\nabla \times)^4 \bphi+\nabla \xi =&\bzeta_0, \qquad \text{in}\ \Omega,\\
\nabla\cdot\bphi =& 0, \qquad\text{in} \ \Omega,\\
\bphi\times\bn=& 0, \qquad\text{on} \ \partial\Omega,\\
\nabla \times\bphi=& 0, \qquad\text{on} \ \partial\Omega,\\
\xi=& 0, \qquad\text{on} \ \partial\Omega,  
\end{split}
\end{equation}
where we define $\bzeta_h=\bQ_h\bu-\bu_h=\{\bzeta_0,\bzeta_b,\bzeta_n\}$ and  recall that $\be_h=\bu-\bu_h=\{\be_0,\be_b,\be_n\}$.  

Let ${t_0}=\min\{k, 3\}$. We further assume the regularity property for the dual problem \eqref{dual} holds true, in the sense that $\bphi$ and $\xi$ satisfy:
 \begin{equation}\label{regu}
 \|\bphi\|_{t_0+1} +\|(\nabla\times)^2\bphi\|_{t_0-1}+\|\xi\|_{1}\leq C\| \bzeta_0\|.
\end{equation}

 \begin{lemma}
Assume the exact solutions $\xi$  and $\bphi$ of the  auxiliary problem  \eqref{dual} are sufficiently regular such that $\xi\in H^{k+1} (\Omega)$ and $\bphi\in [H^{k+1} (\Omega)]^3$.
Then, there exists a constant $C$, such that the following error estimates hold true, namely:
\begin{equation}\label{erroresti1}
\3bar \xi-Q_h\xi \3bar _{W_h}\leq Ch^{k+2}\|\xi\|_{k+1},
\end{equation}
\begin{equation}\label{errorestiphi}
\3bar \bphi-\bQ_h\bphi \3bar _{V_h}\leq Ch^{k-1}\|\bphi\|_{k+1}.
\end{equation}
\end{lemma}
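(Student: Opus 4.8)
The plan is to reduce both estimates to the equivalent discrete Sobolev norms using the norm equivalences of Lemma \ref{normmm} and Lemma \ref{normeqva}, namely $\3bar \xi-Q_h\xi\3bar_{W_h}\leq C_2\|\xi-Q_h\xi\|_{1,h}$ and $\3bar \bphi-\bQ_h\bphi\3bar_{V_h}\leq C_2\|\bphi-\bQ_h\bphi\|_{2,h}$, and then to bound the resulting interior and boundary contributions by standard $L^2$-projection approximation estimates combined with the trace inequality \eqref{trace-inequality}. Throughout I interpret a smooth function as the weak function formed from its interior value and its boundary traces, so that $\xi-Q_h\xi$ has components $\sigma_0=\xi-Q_0\xi$, $\sigma_b=\xi-Q_b\xi$, while $\bphi-\bQ_h\bphi$ has interior value $\bphi-\bQ_0\bphi$ and boundary data $(\bphi-\bQ_b\bphi)\times\bn$ and $((\nabla\times\bphi)-\bQ_n(\nabla\times\bphi))\times\bn$.

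For \eqref{erroresti1}, recall $\|\sigma\|_{1,h}^2=\sum_{T}h_T^4\|\nabla\sigma_0\|_T^2+h_T^3\|\sigma_0-\sigma_b\|_{\partial T}^2$. The interior term is controlled by the approximation estimate $\|\nabla(\xi-Q_0\xi)\|_T\leq Ch_T^k\|\xi\|_{k+1,T}$, which after multiplication by $h_T^4$ and summation produces the factor $h^{k+2}$. For the boundary term I would use the key absorption identity $\sigma_0-\sigma_b=Q_b\xi-Q_0\xi=Q_b(\xi-Q_0\xi)$, valid because the trace of $Q_0\xi$ already lies in the range $P_k(e)$ of $Q_b$; then $\|Q_b(\xi-Q_0\xi)\|_{\partial T}\leq\|\xi-Q_0\xi\|_{\partial T}$ by $L^2$-stability of $Q_b$, and \eqref{trace-inequality} together with the approximation estimate gives $\|\xi-Q_0\xi\|_{\partial T}^2\leq Ch_T^{2k+1}\|\xi\|_{k+1,T}^2$, so the $h_T^3$ weight again yields the matching $h^{k+2}$ rate.

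For \eqref{errorestiphi}, recall $\|\bv\|_{2,h}^2=\sum_{T}\|(\nabla\times)^2\bv_0\|_T^2+h_T^{-3}\|\bv_0\times\bn-\bv_b\times\bn\|_{\partial T}^2+h_T^{-1}\|\nabla\times\bv_0\times\bn-\bv_n\times\bn\|_{\partial T}^2$. The volumetric term is bounded by $\|(\nabla\times)^2(\bphi-\bQ_0\bphi)\|_T\leq Ch_T^{k-1}\|\bphi\|_{k+1,T}$, a second-order operator, which is exactly the $h^{k-1}$ rate that caps the whole estimate. For the two boundary terms I would again exploit absorption identities, $\bv_0\times\bn-\bv_b\times\bn=(\bQ_b\bphi-\bQ_0\bphi)\times\bn=\bQ_b(\bphi-\bQ_0\bphi)\times\bn$ and $\nabla\times\bv_0\times\bn-\bv_n\times\bn=(\bQ_n(\nabla\times\bphi)-\nabla\times\bQ_0\bphi)\times\bn=\bQ_n(\nabla\times(\bphi-\bQ_0\bphi))\times\bn$, the second requiring the degree compatibility that $\nabla\times\bQ_0\bphi|_e$ lies in the range $[P_{k-1}(e)]^3$ of $\bQ_n$. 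Using $L^2$-stability of $\bQ_b,\bQ_n$ and \eqref{trace-inequality} gives $\|\bphi-\bQ_0\bphi\|_{\partial T}^2\leq Ch_T^{2k+1}\|\bphi\|_{k+1,T}^2$ and $\|\nabla\times(\bphi-\bQ_0\bphi)\|_{\partial T}^2\leq Ch_T^{2k-1}\|\bphi\|_{k+1,T}^2$, so the weights $h_T^{-3}$ and $h_T^{-1}$ are exactly compensated and both boundary terms deliver $h^{k-1}$.

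The main obstacle is the careful bookkeeping of the mixed interior/boundary projection differences: establishing the absorption identities (that the face $L^2$ projections annihilate the polynomial parts already present in the interior projection) and verifying the polynomial-degree compatibility of $\bQ_n$ against $\nabla\times\bQ_0$. This is precisely what allows the negative powers of $h_T$ appearing in $\|\cdot\|_{2,h}$ to be absorbed and the stated orders to be recovered; once these identities are in place the remaining steps are routine applications of approximation theory and the trace inequality.
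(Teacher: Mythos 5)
Your estimates and rates are all correct, and the substance of your argument coincides with the paper's: the same absorption identities ($Q_b\xi-Q_0\xi=Q_b(\xi-Q_0\xi)$, $\bQ_n(\nabla\times\bphi)-\nabla\times\bQ_0\bphi=\bQ_n(\nabla\times(\bphi-\bQ_0\bphi))$, valid by the degree compatibilities you note), the same $L^2$-stability of the face projections, and the same combination of the trace inequality \eqref{trace-inequality} with standard approximation estimates. The difference is organizational: the paper does not pass through the norm equivalences at all, but instead bounds $(\nabla_w(\xi-Q_h\xi),\bv)_T$ and $((\nabla\times)^2_w(\bphi-\bQ_h\bphi),\bq)_T$ directly from the definitions \eqref{disgradient*} and \eqref{discurlcurlnew} for arbitrary polynomial test functions, and then takes $\bv=\nabla_w(\xi-Q_h\xi)$, $\bq=(\nabla\times)^2_w(\bphi-\bQ_h\bphi)$ — a duality argument that yields the energy norms without ever invoking Lemmas \ref{normmm} and \ref{normeqva}. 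This matters because those two lemmas are stated only for $\sigma\in W_h$ and $\bv\in V_h$, whereas $\xi-Q_h\xi$ and $\bphi-\bQ_h\bphi$ are not discrete weak functions (their components are not polynomials), so your citation is, strictly speaking, outside the lemmas' hypotheses. The gap is easily closed — the upper-bound halves $\3bar\sigma\3bar_{W_h}\leq C_2\|\sigma\|_{1,h}$ and $\3bar\bv\3bar_{V_h}\leq C_2\|\bv\|_{2,h}$ extend verbatim to weak functions with $\sigma_0\in H^1(T)$ and $\bv_0\in H(curl^2;T)$, since their proofs apply the trace and inverse inequalities only to the polynomial test functions $\boldsymbol{\psi}$ and $\bq$ — but you should state this extension explicitly rather than cite the discrete-space lemmas as given; the paper's inline duality computation is precisely that extension carried out for these two particular functions.
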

\begin{proof}
Using \eqref{disgradient*} and the trace inequalities \eqref{trace-inequality}-\eqref{trace}, we have,  for any $\bv\in [P_{r_2} (T)]^3$,   
\begin{equation*}
\begin{split}
&\quad\sum_{T\in {\cal T}_h}(\nabla_w(\xi-Q_h\xi), \bv)_T\\
&=\sum_{T\in {\cal T}_h}(\nabla(\xi-Q_0\xi),  \bv)_T+\langle Q_0\xi-Q_b\xi, \bv\cdot\bn\rangle_{\partial T}\\
&\leq \Big(\sum_{T\in {\cal T}_h}\|\nabla(\xi-Q_0\xi)\|^2_T\Big)^{\frac{1}{2}} \Big(\sum_{T\in {\cal T}_h}\|\bv\|_T^2\Big)^{\frac{1}{2}}\\&\quad+ \Big(\sum_{T\in {\cal T}_h} \|Q_0\xi-Q_b\xi\|_{\partial T} ^2\Big)^{\frac{1}{2}}\Big(\sum_{T\in {\cal T}_h} \|\bv\|_{\partial T}^2\Big)^{\frac{1}{2}}\\
&\leq\Big(\ \sum_{T\in {\cal T}_h} \|\nabla(\xi-Q_0\xi)\|_T^2\Big)^{\frac{1}{2}}\Big(\sum_{T\in {\cal T}_h} \|\bv\|_T^2\Big)^{\frac{1}{2}}\\&\quad+\Big(\sum_{T\in {\cal T}_h}h_T^{-1} \|Q_0\xi-\xi\|_{T} ^2+h_T \|Q_0\xi-\xi\|_{1,T} ^2\Big)^{\frac{1}{2}}\Big(\sum_{T\in {\cal T}_h}Ch_T^{-1}\|\bv\|_T^2\Big)^{\frac{1}{2}}\\
&\leq Ch^k\|\xi\|_{k+1}\Big(\sum_{T\in {\cal T}_h} \|\bv\|_T^2\Big)^{\frac{1}{2}}.
\end{split}
\end{equation*}
Letting $\bv=\nabla_w(\xi-Q_h\xi)$ gives 
$$
\sum_{T\in {\cal T}_h}h_T^4(\nabla_w(\xi-Q_h\xi), \nabla_w(\xi-Q_h\xi))_T\leq 
 Ch^{k+2}\|\xi\|_{k+1}\3bar \xi-Q_h\xi \3bar _{W_h}.$$  
  This completes the proof of the estimate \eqref{erroresti1}.
 
  Using \eqref{discurlcurlnew}, Cauchy-Schwarz inequality, the inverse inequality,  and the trace inequalities \eqref{trace-inequality}-\eqref{trace}, we have, for any $\bv\in [P_{r_2} (T)]^2$,  
\begin{equation*}
\begin{split}
&\quad \ \sum_{T\in {\cal T}_h}((\nabla_w\times)^2(\bphi-\bQ_h\bphi), \bq)_T\\
&=\sum_{T\in {\cal T}_h}((\nabla\times)^2( \bphi-\bQ_0\bphi),   \bq)_T
   -\langle (\bQ_0\bphi-\bQ_b\bphi)\times\bn, \nabla\times\bq\rangle_{\partial T}\\
&\quad \ -\langle (\nabla\times \bQ_0\bphi
        -\bQ_n(\nabla\times\bphi))\times\bn, \bq\rangle_{\partial T}\\
&\leq \Big(\sum_{T\in {\cal T}_h}\|(\nabla\times)^2
        ( \bphi-\bQ_0\bphi)\|^2_T\Big)^{\frac{1}{2}}
        \Big(\sum_{T\in {\cal T}_h}\|\bq\|_T^2\Big)^{\frac{1}{2}} \\
  &\quad \  + \Big(\sum_{T\in {\cal T}_h} \|(\bQ_0\bphi- 
      \bQ_b\bphi)\times\bn\|_{\partial T} ^2\Big)^{\frac{1}{2}}  
       \Big(\sum_{T\in {\cal T}_h} \|\nabla\times\bq\|_{\partial T}^2\Big)^{\frac{1}{2}} \\
 &\quad \ + \Big(\sum_{T\in {\cal T}_h} \|(\nabla\times \bQ_0\bphi- 
    \bQ_n(\nabla\times\bphi))\times\bn\|_{\partial T} ^2\Big)^{\frac{1}  
     {2}}\Big(\sum_{T\in {\cal T}_h} \| \bq\|_{\partial T}^2\Big)^{\frac{1}{2}}\\
&\leq \Big(\sum_{T\in {\cal T}_h}\|(\nabla\times)^2( \bphi-   
     \bQ_0\bphi)\|^2_T\Big)^{\frac{1}{2}} 
       \Big(\sum_{T\in {\cal    T}_h}\|\bq\|_T^2\Big)^{\frac{1}{2}}
   + \Big(\sum_{T\in {\cal T}_h} h_T^{-1}\|(\bQ_0\bphi-\bQ_b\bphi) \\
& \quad \
      \times\bn\|_{ T} ^2
      +h_T \|(\bQ_0\bphi-\bQ_b\bphi)\times\bn\|_{1, T} ^2\Big)^{\frac{1}{2}}
   \Big(\sum_{T\in {\cal T}_h} h_T^{-3}\| \bq\|_{  T}^2\Big)^{\frac{1}{2}}
    + \Big(\sum_{T\in {\cal T}_h} h_T^{-1}\\
&\quad \ \|(\nabla\times \bQ_0\bphi-\bQ_n(\nabla\times\bphi))\times\bn\|_{  T} 
  + h_T \|(\nabla\times \bQ_0\bphi-\bQ_n(\nabla\times\bphi))\times\bn\|_{ 1, T} ^2\Big)^{\frac{1}{2}} \\ 
&\quad \ \cdot 
  \Big(\sum_{T\in {\cal T}_h} h_T^{-1}\| \bq\|_{ T}^2\Big)^{\frac{1}{2}}\\
&\leq \Big(\sum_{T\in {\cal T}_h}\|(\nabla\times)^2
   ( \bphi-\bQ_0\bphi)\|^2_T\Big)^{\frac{1}{2}}
   \Big(\sum_{T\in {\cal T}_h}\|\bq\|_T^2\Big)^{\frac{1}{2}}
   + \Big(\sum_{T\in {\cal T}_h} h_T^{-1}\|\bQ_0\bphi-  \bphi \|_{ T} ^2\\
&\quad \ +h_T \| \bQ_0\bphi- \bphi \|_{1, T} ^2\Big)^{\frac{1}{2}}
  \Big(\sum_{T\in {\cal T}_h} h_T^{-3}\| \bq\|_{  T}^2\Big)^{\frac{1}{2}}
  + \Big(\sum_{T\in {\cal T}_h} h_T^{-1}\|\nabla\times \bQ_0\bphi- \nabla\times\bphi \|_{  T} \\
&\quad \ + h_T \|\nabla\times \bQ_0\bphi- \nabla\times\bphi \|_{ 1, T} ^2\Big)^{\frac{1}{2}}
  \Big(\sum_{T\in {\cal T}_h} h_T^{-1}\| \bq\|_{ T}^2\Big)^{\frac{1}{2}}\\
&\leq Ch^{k-1}\|\bphi\|_{k+1}\Big(\sum_{T\in {\cal T}_h} \|\bq\|_T^2\Big)^{\frac{1}{2}}.
\end{split}
\end{equation*}
Letting $\bq=(\nabla_w\times)^2(\bphi-\bQ_h\bphi)$ gives 
$$
\sum_{T\in {\cal T}_h} ((\nabla_w\times)^2(\bphi-\bQ_h\bphi), (\nabla_w\times)^2(\bphi-\bQ_h\bphi))_T\leq 
 Ch^{k-1}\|\bphi\|_{k+1}\3bar \bphi-\bQ_h\bphi \3bar _{V_h}.$$   
 This completes the proof of \eqref{errorestiphi}.
\end{proof}

 \begin{theorem}
 Let {$k\geq 2$} and   ${t_0}=\min\{k, 3\}$. Assume the exact solution $\bu$ of the quad-curl model problem \eqref{model} is sufficiently regular such that $\bu\in [H^{k+1}(\Omega)]^3$ and $(\nabla\times)^2\bu\in [H^{k-1}(\Omega)]^3$. Let $(\bu_h, p_h)\in V_h^0\times W_h^0$ be WG approximations obtained from the stabilizer free WG algorithm \eqref{32}-\eqref{2}. The following $L^2$ error estimate holds: 
 \begin{equation}
      \|\be_0\|  \leq  Ch^{t_0+k-2} (\|\bu\|_{k+1}+\|(\nabla \times)^2\bu\|_{k-1}).
 \end{equation}
 In other words, we achieve a sub-optimal order of convergence for { $k=2$} and an optimal order of convergence for $k\geq 3$.
 \end{theorem}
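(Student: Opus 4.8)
The plan is to run an Aubin--Nitsche duality argument driven by the auxiliary problem \eqref{dual}, whose source is precisely $\bzeta_0=\bQ_0\bu-\bu_0$, the interior component of $\bzeta_h=\bQ_h\bu-\bu_h\in V_h^0$. First I would split $\be_0=(\bu-\bQ_0\bu)+\bzeta_0$. The projection part satisfies $\|\bu-\bQ_0\bu\|\le Ch^{k+1}\|\bu\|_{k+1}$, which is never of lower order than the asserted bound because $t_0\le 3$ forces $t_0+k-2\le k+1$; hence it suffices to estimate $\|\bzeta_0\|$.

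To this end I would produce a dual pair of error identities by repeating the derivation of Lemma \ref{errorequa} with $(\bphi,\xi,\bzeta_0)$ in place of $(\bu,p,\bf)$. Using Lemma \ref{Lemma5.1} to write $(\nabla\times)^2_w\bphi={\cal Q}_h^{r_1}((\nabla\times)^2\bphi)$ and $\nabla_w\xi={\cal Q}_h^{r_2}(\nabla\xi)$, integrating by parts, and invoking the boundary conditions in \eqref{dual}, this yields, for all $\bv_h\in V_h^0$ and $q_h\in W_h^0$, the relations $a(\bphi,\bv_h)+b(\bv_h,\xi)=(\bzeta_0,\bv_0)+\ell_1(\bphi,\bv_h)$ and $b(\bphi,q_h)=\ell_2(\bphi,q_h)$. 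Testing the first with $\bv_h=\bzeta_h$ gives the master identity
\begin{equation*}
\|\bzeta_0\|^2=a(\bphi,\bzeta_h)+b(\bzeta_h,\xi)-\ell_1(\bphi,\bzeta_h).
\end{equation*}
Next I would remove the bilinear-form terms: splitting $a(\bphi,\bzeta_h)=a(\bQ_h\bphi,\bzeta_h)+a(\bphi-\bQ_h\bphi,\bzeta_h)$ and $b(\bzeta_h,\xi)=b(\bzeta_h,Q_h\xi)+b(\bzeta_h,\xi-Q_h\xi)$, I would use the symmetry of $a$ together with the primal error equations \eqref{sehv}--\eqref{sehv2}, tested with the admissible pair $\bv_h=\bQ_h\bphi\in V_h^0$ and $q_h=Q_h\xi\in W_h^0$, and the dual relation $b(\bphi,q_h)=\ell_2(\bphi,q_h)$, to cancel the $a$-, $b$- and $c$-contributions. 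What remains is $\|\bzeta_0\|^2$ expressed through the residual functionals $\ell_1(\bu,\bQ_h\bphi)$, $\ell_1(\bphi,\bzeta_h)$, $\ell_2(\bu,Q_h\xi)$, $\ell_2(\bphi,\epsilon_h)$, the stabilization term $c(\epsilon_h,Q_h\xi)$, and the projection remainders $a(\bphi-\bQ_h\bphi,\bzeta_h)$ and $b(\bzeta_h,\xi-Q_h\xi)$.

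Finally I would estimate each piece, the two binding contributions being $\ell_1(\bu,\bQ_h\bphi)$ and $\ell_1(\bphi,\bzeta_h)$. For the former I would apply Cauchy--Schwarz keeping \emph{both} small factors: the primal residual $({\cal Q}_h^{r_1}-I)((\nabla\times)^2\bu)=O(h^{k-1})$, and the jumps of $\bQ_h\bphi$, which equal those of $\bQ_h\bphi-\bphi$ and are therefore controlled by $\3bar\bphi-\bQ_h\bphi\3bar_{V_h}\le Ch^{t_0-1}\|\bphi\|_{t_0+1}$ from \eqref{errorestiphi} with $t_0$ in place of $k$; their product is $O(h^{t_0+k-2})$. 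For the latter I would combine the dual residual $\|({\cal Q}_h^{r_1}-I)((\nabla\times)^2\bphi)\|=O(h^{t_0-1})$ with the energy estimate $\3bar\bzeta_h\3bar_{V_h}\le Ch^{k-1}(\|\bu\|_{k+1}+\|(\nabla\times)^2\bu\|_{k-1})$ of Theorem \ref{thm}. All remaining terms are of order $h^{k+1}$ or higher---hence never worse than the binding order, the $h_T^4$-weighting in $c(\cdot,\cdot)$ and the $\|\cdot\|_{1,h}$ norm compensating for the mere $H^1$-regularity of $\xi$. Applying the regularity \eqref{regu} to replace $\|\bphi\|_{t_0+1}+\|(\nabla\times)^2\bphi\|_{t_0-1}+\|\xi\|_1$ by $C\|\bzeta_0\|$ then yields $\|\bzeta_0\|^2\le Ch^{t_0+k-2}(\|\bu\|_{k+1}+\|(\nabla\times)^2\bu\|_{k-1})\|\bzeta_0\|$, and dividing gives the claim.

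The hard part will be the two middle steps. Setting up the dual error identities requires checking that the boundary integrals produced by integration by parts still vanish against $V_h^0$ and $W_h^0$, even though the dual problem carries a genuine multiplier $\xi$ (in contrast to $p=0$) and imposes the different boundary condition $\nabla\times\bphi=0$ on $\partial\Omega$. The subsequent cancellation is delicate, since one must pair the symmetric form $a$ correctly and verify that the multiplier remainders $b(\bzeta_h,\xi-Q_h\xi)$ and $c(\epsilon_h,Q_h\xi)$ are genuinely of higher order. It is precisely the bookkeeping of these powers---with $(\nabla\times)^2\bphi\in H^{t_0-1}$ and $t_0=\min\{k,3\}$---that produces the optimal rate $h^{k+1}$ for $k\ge 3$ and the suboptimal rate $h^2$ for $k=2$.
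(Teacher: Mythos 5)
Your overall strategy---an Aubin--Nitsche duality argument built on \eqref{dual}, tested against $\bzeta_h$, with the primal error equations \eqref{sehv}--\eqref{sehv2} evaluated at $\bQ_h\bphi$ and $Q_h\xi$ used to cancel the bilinear forms---is exactly the route the paper takes, and your master identity and most of your residual list match the paper's terms $I_1,\dots,I_9$. However, there is one genuine gap. Your cancellation requires passing from $a(\bQ_h\bphi,\bzeta_h)$ to $a(\bQ_h\bphi,\be_h)$ (so that \eqref{sehv} applies), and since $\bzeta_h=\be_h-(\bu-\bQ_h\bu)$ this leaves behind the term $a(\bQ_h\bphi,\bu-\bQ_h\bu)$ (the paper carries the equivalent term $I_4=\sum_T((\nabla\times)^2_w\bphi,(\nabla\times)^2_w(\bQ_h\bu-\bu))_T$). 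This term is absent from your list of residuals, and it cannot be dismissed: by naive Cauchy--Schwarz it is only
\begin{equation*}
\bigl|a(\bQ_h\bphi,\bu-\bQ_h\bu)\bigr|\le \3bar \bQ_h\bphi\3bar_{V_h}\,\3bar \bu-\bQ_h\bu\3bar_{V_h}\le C\cdot 1\cdot h^{k-1}\|\bu\|_{k+1},
\end{equation*}
since $\3bar\bQ_h\bphi\3bar_{V_h}=O(\|(\nabla\times)^2\bphi\|)=O(1)$. That is the bare energy rate---it yields no duality gain at all and would reduce your final bound to $\|\bzeta_0\|\le Ch^{k-1}$. The paper's fix is the crux of the whole $L^2$ estimate: insert the $L^2$ projection $Q^{t_0-2}$ onto $P_{t_0-2}(T)$ and observe, from the definition \eqref{discurlcurl} together with the defining properties of $\bQ_0$, $\bQ_b$, $\bQ_n$, the orthogonality $(Q^{t_0-2}(\nabla\times)^2_w\bphi,(\nabla\times)^2_w(\bQ_h\bu-\bu))_T=0$; this replaces the $O(1)$ factor by $\|(I-Q^{t_0-2})(\nabla\times)^2_w\bphi\|\le Ch^{t_0-1}\|(\nabla\times)^2\bphi\|_{t_0-1}$ and restores the rate $h^{t_0+k-2}$. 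Your proposal contains no mechanism for this superconvergence step.

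A secondary, non-fatal inaccuracy: your claim that only $\ell_1(\bu,\bQ_h\bphi)$ and $\ell_1(\bphi,\bzeta_h)$ are binding and that ``all remaining terms are of order $h^{k+1}$ or higher'' is wrong. The projection remainder $a(\bphi-\bQ_h\bphi,\bzeta_h)$ is $O(h^{t_0-1})\cdot O(h^{k-1})=O(h^{t_0+k-2})$, and the multiplier term $b(\bQ_h\bphi,\epsilon_h)$ is also of that order---and only after a nontrivial manipulation (integration by parts, $\nabla\cdot\bphi=0$ from \eqref{dual}, and $\epsilon_b=0$ on $\partial\Omega$, reducing it to $\sum_T\langle\epsilon_0-\epsilon_b,(\bQ_0\bphi-\bphi)\cdot\bn\rangle_{\pT}$); the naive bound $\|\bQ_0\bphi\|\,\|\nabla_w\epsilon_h\|$ is as bad as $O(h^{k-3})$. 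These terms still land at the target rate, so they do not break the argument, but they are not higher order and each needs its own careful treatment as in the paper's estimates of $I_1$ and $I_3$.
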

    
 \begin{proof}
 Using the usual integration by parts and \eqref{l-2},  and letting $\bu=\bphi$ and $\bv_h=\bzeta_h$ in \eqref{eq1},  $\bv_h=\bQ_h\bphi$ in \eqref{sehv}, and  $q_h= Q_h\xi$ in \eqref{sehv2},   we  obtain:  
 \begin{equation}\label{dq}
     \begin{split}
& \|\bzeta_0\|^2\\=&\sum_{T\in {\cal T}_h}((\nabla \times)^4 \bphi+\nabla \xi, \bzeta_0)_T\\
 =&\sum_{T\in {\cal T}_h} ((\nabla \times)^2\bphi, (\nabla \times)^2\bzeta_0)_T+\langle \nabla\times\bzeta_0, \bn\times (\nabla \times)^2\bphi\rangle_{\partial T}\\
 &+\langle \bzeta_0,\bn\times(\nabla \times)^3\bphi\rangle_{\partial T} +(\bzeta_0, {\cal Q}^{r_2}_h \nabla  \xi)_T\\
=&\sum_{T\in {\cal T}_h}((\nabla\times)^2_w \bphi,  (\nabla\times)^2_w \bzeta_h)_T  -\langle (\bzeta_0-\bzeta_b)\times\bn, \nabla\times{\cal Q}_h^{r_1} ((\nabla\times)^2\bphi)\rangle_{\partial T}\\&-\langle (\nabla \times \bzeta_0-\bzeta_n)\times\bn, {\cal Q}_h^{r_1} ((\nabla\times)^2\bphi)\rangle_{\partial T}
\\& +\langle \nabla\times\bzeta_0, \bn\times (\nabla \times)^2\bphi\rangle_{\partial T}+\langle \bzeta_0,\bn\times(\nabla \times)^3\bphi\rangle_{\partial T}  +(\nabla_w  \xi, \bzeta_0)_T \\
=&\sum_{T\in {\cal T}_h}((\nabla\times)^2_w  \bphi,  (\nabla\times)^2_w \bzeta_h)_T    -\langle (\bzeta_0-\bzeta_b)\times\bn, \nabla\times({\cal Q}_h^{r_1}-I) ((\nabla\times)^2\bphi)\rangle_{\partial T}\\&-\langle (\nabla \times \bzeta_0-\bzeta_n)\times\bn,( {\cal Q}_h^{r_1}-I) ((\nabla\times)^2\bphi)\rangle_{\partial T}  + (\nabla_w  \xi, \bzeta_0)_T \\
=&\sum_{T\in {\cal T}_h}((\nabla\times)^2_w  \bphi,  (\nabla\times)^2_w \be_h)_T +((\nabla\times)^2_w \bphi,  (\nabla\times)^2_w (\bQ_h\bu-\bu))_T -\ell_1(\bzeta_h, \bphi) \\&  +\sum_{T\in {\cal T}_h}(\nabla_w   \xi, \be_0)_T +(\nabla_w   \xi, \bQ_0\bu-\bu)_T \\
=&\sum_{T\in {\cal T}_h}((\nabla\times)^2_w  \bQ_h\bphi,  (\nabla\times)^2_w \be_h)_T-((\nabla\times)^2_w  (\bQ_h\bphi-\bphi),  (\nabla\times)^2_w \be_h)_T \\&+((\nabla\times)^2_w \bphi,  (\nabla\times)^2_w (\bQ_h\bu-\bu))_T -\ell_1(\bzeta_h, \bphi) \\&  +\sum_{T\in {\cal T}_h} (\nabla_w  Q_h \xi, \be_0)_T +(\nabla_w   (\xi-Q_h\xi), \be_0)_T +(\nabla_w   \xi, \bQ_0\bu-\bu)_T \\
\\=& -b(\bQ_h\bphi,  \epsilon_h) +    \ell_1(\bu,  \bQ_h\bphi) +\sum_{T\in {\cal T}_h} -((\nabla\times)^2_w  (\bQ_h\bphi-\bphi),  (\nabla\times)^2_w \be_h)_T \\& + ((\nabla\times)^2_w \bphi,  (\nabla\times)^2_w (\bQ_h\bu-\bu))_T  -\ell_1(\bzeta_h, \bphi) \\&+\ell_2(\bu, Q_h \xi) +c(\epsilon_h,  Q_h\xi) + \sum_{T\in {\cal T}_h}  (\nabla_w   (\xi-Q_h\xi), \be_0)_T+(\nabla_w   \xi, \bQ_0\bu-\bu)_T\\
=&\sum_{i=1}^9 I_i,
\end{split}
\end{equation} 
where we used  $\bzeta_b\times\bn=0$ and  $\bzeta_n\times\bn=0$ on $\partial\Omega$.

We will estimate each term $I_i$  for $i=1,\cdots, 9$ on the last line of \eqref{dq} individually.  Recall that $t_0=\min\{k, 3\}$.

For the term $I_1$, using \eqref{disgradient*},  the usual integration by parts, Cauchy-Schwarz inequality, the trace inequality \eqref{trace}, \eqref{normeqa}, \eqref{estimate1}, we have
    \begin{equation*} 
        \begin{split}
       | b(\bQ_h\bphi, \epsilon_h)| =&|(\bQ_0\bphi, \nabla_w\epsilon_h)| \\
             =&|\sum_{T\in {\cal T}_h} (\nabla \epsilon_0,  \bQ_0\bphi)_T-\langle \epsilon_0-\epsilon_b, \bQ_0\bphi \cdot \textbf{n}\rangle_{\partial T}|
           \\
            =& |\sum_{T\in {\cal T}_h} (\nabla \epsilon_0,   \bphi)_T-\langle \epsilon_0-\epsilon_b, \bQ_0\bphi \cdot \textbf{n}\rangle_{\partial T}|\\  =&|\sum_{T\in {\cal T}_h}  -(  \epsilon_0,   \nabla\cdot\bphi)_T+\langle  \epsilon_0,\bphi\cdot\bn\rangle_{\partial T} -\langle \epsilon_0-\epsilon_b, \bQ_0\bphi \cdot \textbf{n}\rangle_{\partial T}|\\
             =&|\sum_{T\in {\cal T}_h} -\langle \epsilon_0 -\epsilon_b, (\bQ_0\bphi-\bphi) \cdot \textbf{n}\rangle_{\partial T}|\\
            \leq & 
\Big(\sum_{T\in {\cal T}_h} h_T^{3}\|\epsilon_0 -\epsilon_b\|_{ \partial T}^2 \Big)^{\frac{1}{2}}\Big(\sum_{T\in {\cal T}_h} h_T^{-3}\|(\bQ_0\bphi-\bphi) \cdot \textbf{n}\|_{\partial T}^2 \Big)^{\frac{1}{2}}\\
\leq & C\|\epsilon_h\|_{1,h} \Big(\sum_{T\in {\cal T}_h} h_T^{-4}\|(\bQ_0\bphi-\bphi) \cdot \textbf{n}\|_{T}^2 +h_T^{-2}\|(\bQ_0\bphi-\bphi) \cdot \textbf{n}\|_{1, T}^2 \Big)^{\frac{1}{2}}\\
\leq & C\3bar \epsilon_h\3bar_{W_h} h^{t_0-1}\|\bphi\|_{t_0+1}\\
\leq &  Ch^{t_0+k-2} (\|\bu\|_{k+1}+\|(\nabla \times)^2\bu\|_{k-1})\|\bphi\|_{t_0+1}.
        \end{split}
    \end{equation*}
where we used the second equality in \eqref{dual}, and the fact that $\sum_{T\in {\cal T}_h}\langle  \epsilon_b, \bphi  \cdot \textbf{n}\rangle_{\partial T}=\langle  \epsilon_b, \bphi  \cdot \textbf{n}\rangle_{\partial \Omega}=0$  due to $\epsilon_b=0$ on $\partial \Omega$.

Regarding to the term $I_2$, using Cauchy-Schwarz inequality, the trace inequality \eqref{trace-inequality}, we have
 \begin{equation*}\label{q3}
 \begin{split}
    &|\ell_1(\bu, \bQ_h\bphi)| \\\leq&\Big|\sum_{T\in{\cal T}_h}\langle (\bQ_0 \bphi-\bQ_b\bphi)\times\bn, \nabla\times({\cal Q}_h^{r_1} -I)((\nabla\times)^2\bu)\rangle_{\partial T} \\&+\langle (\nabla \times \bQ_0 \bphi-\bQ_n(\nabla \times\bphi))\times\bn, ({\cal Q}_h^{r_1} -I)((\nabla\times)^2\bu)\rangle_{\partial T}\Big|
\\  
    \leq& \Big(\sum_{T\in {\cal T}_h} h_T^{-1}\| \bQ_0 \bphi- \bphi \|_{ T}^2+h_T \| \bQ_0 \bphi- \bphi \|_{ 1, T}^2\Big)^{\frac{1}{2}} \\&\cdot \Big(\sum_{T\in {\cal T}_h}h_T^{-1} \|\nabla\times({\cal Q}_h^{r_1} -I)((\nabla\times)^2\bu)\|_{ T}^2+h_T  \|\nabla\times({\cal Q}_h^{r_1} -I)((\nabla\times)^2\bu)\|_{ 1, T}^2\Big)^{\frac{1}{2}}\\
    &+\Big(\sum_{T\in {\cal T}_h} h_T^{-1}\|  \nabla \times \bQ_0 \bphi- \nabla \times\bphi  \|_{ T}^2+h_T \|  \nabla \times \bQ_0 \bphi- \nabla \times\bphi  \|_{ 1, T}^2\Big)^{\frac{1}{2}} \\&\cdot \Big(\sum_{T\in {\cal T}_h}h_T^{-1} \| ({\cal Q}_h^{r_1} -I)((\nabla\times)^2\bu)\|_{ T}^2+h_T  \| ({\cal Q}_h^{r_1} -I)((\nabla\times)^2\bu)\|_{ 1, T}^2\Big)^{\frac{1}{2}}\\
    \leq& Ch^{t_0+k-2}\|(\nabla \times)^2\bu\|_{k-1} \|\bphi\|_{{t_0}+1}.
 \end{split} 
\end{equation*}

Regarding to the term $I_3$, using Cauchy-Schwarz inequality, \eqref{errorestiphi} with $k=t_0$, and \eqref{estimate1}, gives
\begin{equation*}
    \begin{split}
      & |\sum_{T\in {\cal T}_h} ((\nabla\times)^2_w  (\bQ_h\bphi-\bphi),  (\nabla\times)^2_w \be_h)_T|
      \\
      \leq & \Big(\sum_{T\in {\cal T}_h}\|(\nabla\times)^2_w  (\bQ_h\bphi-\bphi)\|_T^2\Big)^{\frac{1}{2}}\Big(\sum_{T\in {\cal T}_h}\|(\nabla\times)^2_w \be_h\|_T^2\Big)^{\frac{1}{2}}\\
      \leq & Ch^{t_0-1}\|\bphi\|_{t_0+1}h^{k-1} (\|\bu\|_{k+1}+\|(\nabla \times)^2\bu\|_{k-1})\\
       \leq & Ch^{k+t_0-2}\|\bphi\|_{t_0+1} (\|\bu\|_{k+1}+\|(\nabla \times)^2\bu\|_{k-1}).
    \end{split}
\end{equation*}

For the term $I_4$,  denote by $Q^{t_0-2}$ a $L^2$ projection onto $P_{t_0-2}(T)$.
Using \eqref{discurlcurl} with $\bq=Q^{t_0-2}(\nabla\times)^2_w \bphi$ yields
\begin{equation*}\label{pro}
    \begin{split}
      &   (Q^{t_0-2}(\nabla\times)^2_w \bphi,  (\nabla\times)^2_w (\bQ_h\bu-\bu))_T  \\ =&(\bQ_{0}\bu-\bu, (\nabla\times)^2 (Q^{t_0-2}(\nabla\times)^2_w \bphi))_T\\&-\langle (\bQ_b\bu-\bu)\times\bn, \nabla\times(Q^{t_0-2}(\nabla\times)^2_w \bphi)\rangle_{\partial T}\\&-\langle (\bQ_n(\nabla\times\bu)-\nabla\times\bu)\times\bn, Q^{t_0-2}(\nabla\times)^2_w \bphi\rangle_{\partial T}\\=&0,
    \end{split} 
\end{equation*} 
where we  used the properties of the projection operators $\bQ_0$, $\bQ_b$, and $\bQ_n$. This, together with   Cauchy-Schwarz inequality, \eqref{errorestiphi} and \eqref{l},   gives
\begin{equation*}
    \begin{split}
      & | \sum_{T\in {\cal T}_h}((\nabla\times)^2_w \bphi,  (\nabla\times)^2_w (\bQ_h\bu-\bu))_T | \\=&| \sum_{T\in {\cal T}_h}( (\nabla\times)^2_w \bphi-Q^{t_0-2}(\nabla\times)^2_w \bphi,  (\nabla\times)^2_w (\bQ_h\bu-\bu))_T | 
      \\\leq & \Big(\sum_{T\in {\cal T}_h}\|( {\cal Q}_h^{r_1}((\nabla\times)^2  \bphi)-Q^{t_0-2}({\cal Q}_h^{r_1}((\nabla\times)^2  \bphi))\|_T^2\Big)^{\frac{1}{2}}\\&\cdot\Big(\sum_{T\in {\cal T}_h}\|(\nabla\times)^2_w (\bQ_h\bu-\bu)\|_T^2\Big)^{\frac{1}{2}}\\
      \leq & Ch^{k-1}\|\bu\|_{k+1}h^{t_0-1}\|(\nabla\times)^2\bphi\|_{t_0-1}\\
      \leq & Ch^{k+t_0-2}\|\bu\|_{k+1} \|(\nabla\times)^2\bphi\|_{t_0-1}.
    \end{split}
\end{equation*}
 
 Regarding to the term $I_5$, using \eqref{error2} with $\bu=\bphi$ and $\bv=\bzeta_h$ and $s=t_0$, \eqref{estimate1}, \eqref{errorestiphi} and \eqref{normeq}, we have
 \begin{equation*}\label{q5}
 \begin{split}
|\ell_1(\bzeta_h, \bphi) |  
   \leq &    Ch^{{t_0}-1}\|(\nabla\times)^2 \bphi\|_{{t_0}-1}\|\bzeta_h\|_{2,h} \\
    \leq  &    Ch^{{t_0}-1}\|(\nabla\times)^2 \bphi\|_{{t_0}-1}\3bar \bzeta_h\3bar_{V_h}\\
    \leq  &    Ch^{{t_0}-1}\|(\nabla\times)^2 \bphi\|_{{t_0}-1}(\3bar \be_h\3bar_{V_h}+\3bar \bQ_h\bu-\bu\3bar_{V_h})\\
     \leq & Ch^{k+t_0-2}\|(\nabla\times)^2 \bphi\|_{t_0-1}  (\|\bu\|_{k+1}+\|(\nabla \times)^2\bu\|_{k-1}).
 \end{split}
 \end{equation*}
 
  Regarding to the term $I_6$, using Cauchy-Schwarz inequality, and the trace inequality \eqref{trace-inequality},  we have
 \begin{equation*}\label{last}
     \begin{split}
        | \ell_2(\bu, Q_h\xi) |\leq &  \Big(\sum_{T\in{\cal T}_h}  \|Q_0\xi-Q_b\xi\|_{\partial T}^2\Big)^{\frac{1}{2}} \Big(\sum_{T\in{\cal T}_h} \|(I-{\cal Q}_h^{r_2})\bu \cdot\bn\|_{\partial T}^2\Big)^{\frac{1}{2}} \\
           \leq &  \Big(\sum_{T\in{\cal T}_h}  h_T^{-1}\|Q_0\xi- \xi\|_{T}^2+h_T \|Q_0\xi- \xi\|_{1, T}^2\Big)^{\frac{1}{2}}\\&\cdot \Big(\sum_{T\in{\cal T}_h} h_T^{-1}\|(I-{\cal Q}_h^{r_2})\bu \cdot\bn\|_{ T}^2+h_T \|(I-{\cal Q}_h^{r_2})\bu \cdot\bn\|_{1, T}^2\Big)^{\frac{1}{2}} \\
           \leq & Ch^{k+1}\|\bu\|_{k+1} \|\xi\|_1.
     \end{split}
    \end{equation*}

  Regarding to the term $I_7$, using Cauchy-Schwarz inequality, \eqref{l-2}, \eqref{estimate1}, \eqref{erroresti1}  with $k=0$,   we have
\begin{equation*}
    \begin{split}
       | c(\epsilon_h, Q_h\xi) | =& |\sum_{T\in {\cal T}_h}h_T^4(\nabla_w\epsilon_h, \nabla_w Q_h\xi)|
\\
\leq &|\sum_{T\in {\cal T}_h}h_T^4(\nabla_w\epsilon_h, \nabla_w (Q_h\xi-\xi))+h_T^4(\nabla_w\epsilon_h, \nabla_w  \xi)|\\
\leq &  \Big(\sum_{T\in{\cal T}_h}   h_T^4 \| \nabla_w\epsilon_h\|_{  T}^2\Big)^{\frac{1}{2}} \Big(\sum_{T\in{\cal T}_h}  h_T^4 \|\nabla_w (Q_h\xi-\xi)\|_{ T}^2\Big)^{\frac{1}{2}}\\&+\Big(\sum_{T\in{\cal T}_h}   h_T^4 \| \nabla_w\epsilon_h\|_{  T}^2\Big)^{\frac{1}{2}} \Big(\sum_{T\in{\cal T}_h}  h_T^4 \|\nabla_w  \xi\|_{ T}^2\Big)^{\frac{1}{2}}\\
\leq &   \3bar\epsilon_h\3bar_{W_h}  \3bar Q_h\xi-\xi\3bar_{W_h} +\3bar\epsilon_h\3bar_{W_h} \Big(\sum_{T\in{\cal T}_h}  h_T^4 \|{\cal Q}^{r_1}\nabla  \xi\|_{ T}^2\Big)^{\frac{1}{2}}\\
  \leq & Ch^{k-1} (\|\bu\|_{k+1}+\|(\nabla \times)^2\bu\|_{k-1})h^2\|\xi\|_1 \\
  \leq & Ch^{k+1} (\|\bu\|_{k+1}+\|(\nabla \times)^2\bu\|_{k-1}) \|\xi\|_1. 
  \end{split}
\end{equation*}
      
    Regarding to the term $I_8$,   using Cauchy-Schwarz inequality,  
    \eqref{erroresti1} with $k=0$, \eqref{regu},   we have
 \begin{equation*}  
     \begin{split}
       & \sum_{T\in{\cal T}_h} (\nabla_w  (\xi-Q_h\xi), \be_0)_T \\\leq &  \Big(\sum_{T\in{\cal T}_h}   \|  \nabla_w  (\xi-Q_h\xi)\|_{  T}^2\Big)^{\frac{1}{2}} \Big(\sum_{T\in{\cal T}_h}   \|\be_0\|_{ T}^2\Big)^{\frac{1}{2}}\\\leq& C h^2\|\xi\|_1\|
        \be_0\|\\
        \leq &  C h^2\|\xi\|_1(\|
        \bzeta_0\|+\|\bu-Q_0\bu\|)\\
        \leq & C h^2\|\xi\|_1 \|
\bzeta_0\|+Ch^2\|\xi\|_1h^{k+1}\|\bu\|_{k+1}
\\
\leq & h^2  \|
\bzeta_0\|^2+C \|\xi\|_1h^{k+3}\|\bu\|_{k+1}.
     \end{split}
    \end{equation*}

  Regarding to the term $I_9$, using Cauchy-Schwarz inequality, \eqref{l-2}, we have
\begin{equation*} \label{last2}
     \begin{split}
        |\sum_{T\in{\cal T}_h} (\nabla_w  \xi, \bQ_0\bu-\bu)_T |=& |\sum_{T\in{\cal T}_h} ({\cal Q}_h^{r_1}\nabla \xi, \bQ_0\bu-\bu)_T 
     |\\
     \leq & \Big(\sum_{T\in{\cal T}_h}   \|  {\cal Q}_h^{r_1}\nabla \xi\|_{  T}^2\Big)^{\frac{1}{2}} \Big(\sum_{T\in{\cal T}_h}   \|\bQ_0\bu-\bu\|_{ T}^2\Big)^{\frac{1}{2}}\\
     \leq & Ch^{k+1}\|\bu\|_{k+1}\|\xi\|_1.
     \end{split}
    \end{equation*}
    
      Substituting  the estimates of $I_i$ for $i=1,\cdots,9$ into 
      \eqref{dq} and using the regularity assumption \eqref{regu}     gives
      \begin{equation*}
          \begin{split}
           (1-h^2)    \|\bzeta_0\|^2 \leq & Ch^{t_0+k-2} (\|\bu\|_{k+1}+\|(\nabla \times)^2\bu\|_{k-1})\|\bphi\|_{t_0+1}   \\&   +Ch^{t_0+k-2}  \|(\nabla \times)^2\bu\|_{k-1} \|\bphi\|_{t_0+1}   \\&+Ch^{k+t_0-2}\|\bphi\|_{t_0+1} (\|\bu\|_{k+1}+\|(\nabla \times)^2\bu\|_{k-1})\\&
        +Ch^{k+t_0-2}\|\bu\|_{k+1} \|(\nabla\times)^2\bphi\|_{t_0-1}\\&+C h^{k+t_0-2}\|(\nabla \times)^2\bphi\|_{t_0-1}  (\|\bu\|_{k+1}+\|(\nabla\times)^2\bu\|_{k-1}) \\   &    +Ch^{k+1}\|\bu\|_{k+1}\|\xi\|_1\\    &  +Ch^{k+1}(\|\bu\|_{k+1}+\|(\nabla\times)^2\bu\|_{k-1})\|\xi\|_1 \\    &+Ch^{k+3}\|\bu\|_{k+1}\|\xi\|_1+Ch^{k+1}\|\bu\|_{k+1}\|\xi\|_1,
          \end{split}
      \end{equation*} 
      which gives
      $$
       \|\bzeta_0\|\leq Ch^{t_0+k-2} (\|\bu\|_{k+1}+\|(\nabla \times)^2\bu\|_{k-1}),
      $$
provided that $h$ is sufficiently small such that $1-h^2>0$. 
This, together with the triangle inequality, we have
      \begin{equation*}
      \begin{split}
            \|\be_0\|\leq & \|\bzeta_0\|+\|\bu-\bQ_0\bu\|\\\leq &   Ch^{t_0+k-2} (\|\bu\|_{k+1}+\|(\nabla \times)^2\bu\|_{k-1})+Ch^{k+1}\|\bu\|_{k+1}\\\leq &  Ch^{t_0+k-2} (\|\bu\|_{k+1}+\|(\nabla \times)^2\bu\|_{k-1}).
      \end{split}
      \end{equation*}
    
      This completes the proof of the theorem.
 \end{proof}

\section{Numerical test}
In the first test,  we solve \eqref{weakform} on the unit square domain $\Omega
  =(0,1)\times(0,1)$.
We choose the exact solution as
\an{\label{e1} \b u=\b{rot} 2^8 (x-x^2)^3 (y-y^2)^3, \
                p=0.  }

     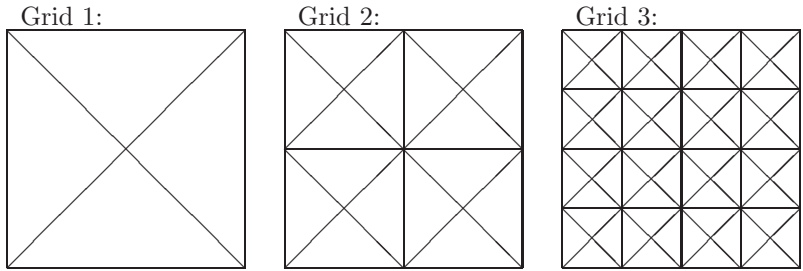
\begin{figure}[H] \setlength\unitlength{1pt}\begin{center}
    \begin{picture}(300,100)(0,0)
     \def\mc{\begin{picture}(90,90)(0,0)
       \put(0,0){\line(1,0){90}} \put(0,90){\line(1,0){90}}
      \put(0,0){\line(0,1){90}}  \put(90,0){\line(0,1){90}} 
       \put(0,0){\line(1,1){90}}  \put(0,90){\line(1,-1){90}}  
      \end{picture}}

    \put(0,0){\mc} \put(5,92){Grid 1:} \put(110,92){Grid 2:}\put(215,92){Grid 3:}
      \put(105,0){\setlength\unitlength{0.5pt}\begin{picture}(90,90)(0,0)
    \put(0,0){\mc}\put(90,0){\mc}\put(0,90){\mc}\put(90,90){\mc}\end{picture}}
      \put(210,0){\setlength\unitlength{0.25pt}\begin{picture}(90,90)(0,0)
    \multiput(0,0)(90,0){4}{\multiput(0,0)(0,90){4}{\mc}} \end{picture}}
    \end{picture}
 \caption{The first three grids for the computation in
    Tables \ref{t-1}--\ref{t-2}. }\label{grid1} 
    \end{center} \end{figure}

We compute the finite element solutions for \eqref{e1} on uniform 
       triangular grids shown in Figure \ref{grid1} by 
  the  $P_k$/$P_{k-1}$ WG finite elements 
      for $k=2$ and $3$.
The results are listed in Tables \ref{t-1}--\ref{t-2}. 
The optimal order of convergence is achieved in all cases.

\begin{table}[H]
  \centering  \renewcommand{\arraystretch}{1.1}
  \caption{The error and the computed order of convergence 
     for the solution \eqref{e1} on Figure \ref{grid1} meshes. }
  \label{t-1}
\begin{tabular}{c|cc|cc|cc}
\hline
Grid &  $\|\b u-\b u_0\|_0$  & $O(h^r)$ & $\3bar\b u-\b u_h\3bar$  & $O(h^r)$ &
      $\|p -p_0\|_0$  & $O(h^r)$    \\
\hline&\multicolumn{6}{c}{ By the $P_2$/$P_{1}$ WG finite element.}\\
\hline 
 1&    0.725E+03 &  0.0&    0.401E+02 &  0.0&    0.400E+02 &  0.0 \\
 2&    0.152E+02 &  5.6&    0.255E+02 &  0.7&    0.124E+01 &  5.0 \\
 3&    0.116E+01 &  3.7&    0.193E+02 &  0.4&    0.329E+00 &  1.9 \\
 4&    0.398E-01 &  4.9&    0.106E+02 &  0.9&    0.236E-01 &  3.8 \\
 5&    0.197E-02 &  4.3&    0.543E+01 &  1.0&    0.161E-02 &  3.9 \\
 6&    0.379E-03 &  2.4&    0.273E+01 &  1.0&    0.104E-03 &  3.9 \\
 7&    0.944E-04 &  2.0&    0.137E+01 &  1.0&    0.144E-04 &  2.9 \\
\hline
    \end{tabular}%
\end{table}%

\begin{table}[H]
  \centering  \renewcommand{\arraystretch}{1.1}
  \caption{The error and the computed order of convergence 
     for the solution \eqref{e1} on Figure \ref{grid1} meshes. }
  \label{t-2}
\begin{tabular}{c|cc|cc|cc}
\hline
Grid &  $\|\b u-\b u_0\|_0$  & $O(h^r)$ & $\3bar\b u-\b u_h\3bar$  & $O(h^r)$ &
      $\|p -p_0\|_0$  & $O(h^r)$    \\
\hline&\multicolumn{6}{c}{ By the $P_3$/$P_{2}$ WG finite element.}\\
\hline 
 1&    0.138E+03 &  0.0&    0.255E+02 &  0.0&    0.103E+02 &  0.0 \\
 2&    0.234E+01 &  5.9&    0.245E+02 &  0.1&    0.113E+01 &  3.2 \\
 3&    0.102E-01 &  7.8&    0.622E+01 &  2.0&    0.491E-01 &  4.5 \\
 4&    0.807E-04 &  7.0&    0.162E+01 &  1.9&    0.224E-02 &  4.5 \\
 5&    0.455E-05 &  4.1&    0.409E+00 &  2.0&    0.141E-03 &  4.0 \\
\hline
    \end{tabular}%
\end{table}%

We next compute the finite element solutions for \eqref{e1} on non-convex polygonal 
        grids shown in Figure \ref{grid2} by 
  the  $P_k$/$P_{k-1}$ WG finite elements 
      for $k=2$ and $3$.
The results are listed in Tables \ref{t-3}--\ref{t-4}. 
The optimal order of convergence is achieved for all solutions in all norms.

     \begin{figure}[H] \setlength\unitlength{1.2pt}\begin{center}
    \begin{picture}(280,98)(0,0)
     \def\mc{\begin{picture}(90,90)(0,0)
       \put(0,0){\line(1,0){90}} \put(0,90){\line(1,0){90}}
      \put(0,0){\line(0,1){90}}  \put(90,0){\line(0,1){90}} 
      
       \put(30,75){\line(1,-2){30}}   
       \put(0,0){\line(2,5){30}}     
       \put(90,90){\line(-2,-5){30}}  
      \end{picture}}

    \put(0,0){\mc} \put(0,92){Grid 1:} \put(95,92){Grid 2:}  \put(190,92){Grid 3:}
    
      \put(95,0){\setlength\unitlength{0.6pt}\begin{picture}(90,90)(0,0)
    \put(0,0){\mc}\put(90,0){\mc}\put(0,90){\mc}\put(90,90){\mc}\end{picture}}
     \put(190,0){\setlength\unitlength{0.3pt}\begin{picture}(90,90)(0,0)
      \multiput(0,0)(90,0){4}{\multiput(0,0)(0,90){4}{\mc}} \end{picture}}
    \end{picture}
 \caption{The first three non-convex polygonal grids for the computation in
    Tables \ref{t-3}--\ref{t-4}. }\label{grid2} 
    \end{center} \end{figure}
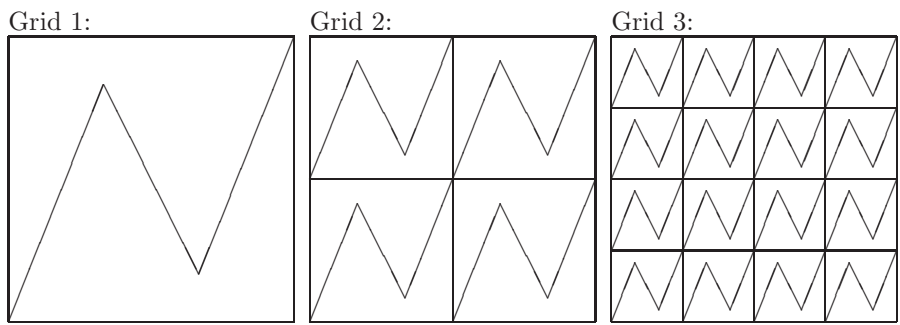

\begin{table}[H]
  \centering  \renewcommand{\arraystretch}{1.1}
  \caption{The error profile 
     for computing \eqref{e1} on Figure \ref{grid2} 
     non-convex polygonal meshes. }
  \label{t-3}
\begin{tabular}{c|cc|cc|cc}
\hline
Grid &  $\|\b u-\b u_0\|_0$  & $O(h^r)$ & $\3bar\b u-\b u_h\3bar$  & $O(h^r)$ &
      $\|p -p_0\|_0$  & $O(h^r)$    \\
\hline&\multicolumn{6}{c}{ By the $P_2$/$P_{1}$ WG finite element.}\\
\hline  
 1&    0.194E+04 &  0.0&    0.246E+03 &  0.0&    0.212E+03 &  0.0 \\
 2&    0.108E+03 &  4.2&    0.175E+03 &  0.5&    0.544E+02 &  2.0 \\
 3&    0.290E+01 &  5.2&    0.164E+03 &  0.1&    0.513E+01 &  3.4 \\
 4&    0.588E-01 &  5.6&    0.933E+02 &  0.8&    0.407E+00 &  3.7 \\
 5&    0.169E-02 &  5.1&    0.482E+02 &  1.0&    0.271E-01 &  3.9 \\
\hline
    \end{tabular}%
\end{table}%

\begin{table}[H]
  \centering  \renewcommand{\arraystretch}{1.1}
  \caption{The error profile 
     for computing \eqref{e1} on Figure \ref{grid2} 
     non-convex polygonal meshes. }
  \label{t-4}
\begin{tabular}{c|cc|cc|cc}
\hline
Grid &  $\|\b u-\b u_0\|_0$  & $O(h^r)$ & $\3bar\b u-\b u_h\3bar$  & $O(h^r)$ &
      $\|p -p_0\|_0$  & $O(h^r)$    \\
\hline&\multicolumn{6}{c}{ By the $P_3$/$P_{2}$ WG finite element.}\\
\hline  
 1&    0.103E+05 &  0.0&    0.122E+04 &  0.0&    0.887E+03 &  0.0 \\
 2&    0.126E+03 &  6.4&    0.440E+03 &  1.5&    0.483E+02 &  4.2 \\
 3&    0.215E+01 &  5.9&    0.155E+03 &  1.5&    0.272E+01 &  4.2 \\
 4&    0.181E-01 &  6.9&    0.431E+02 &  1.8&    0.918E-01 &  4.9 \\
 5&    0.145E-03 &  7.0&    0.111E+02 &  2.0&    0.309E-02 &  4.9 \\ 
\hline
    \end{tabular}%
\end{table}%

In the 3D numerical computation,  the domain for problem \eqref{weakform}
   is the unit cube $\Omega=(0,1)\times(0,1)\times(0,1)$.
We choose an $\b f$ and a $g$ in \eqref{weakform} so that the exact solution is
\an{\label{e3} \ad{  
  \b u &=\p{e^y \\ e^z \\ e^x },\qquad
            p&=0. }  }

We   compute the finite element solutions for \eqref{e3} on the
        grids shown in Figure \ref{grid4} by 
  the  $P_k$/$P_{k-1}$ WG finite elements 
      for $k=2$ and $3$.
The results are listed in Tables  \ref{t-5}--\ref{t-6}. 
The optimal order of convergence is achieved for all solutions in all norms.

\begin{figure}[H] 
\begin{center}
 \setlength\unitlength{1pt}
    \begin{picture}(320,118)(0,3)
    \put(0,0){\begin{picture}(110,110)(0,0) \put(25,102){Grid 1:}
       \multiput(0,0)(80,0){2}{\line(0,1){80}}  \multiput(0,0)(0,80){2}{\line(1,0){80}}
       \multiput(0,80)(80,0){2}{\line(1,1){20}} \multiput(0,80)(20,20){2}{\line(1,0){80}}
       \multiput(80,0)(0,80){2}{\line(1,1){20}}  \multiput(80,0)(20,20){2}{\line(0,1){80}}
    \put(80,0){\line(-1,1){80}}\put(80,0){\line(1,5){20}}\put(80,80){\line(-3,1){60}}
      \end{picture}}
    \put(110,0){\begin{picture}(110,110)(0,0)\put(25,102){Grid 2:}
       \multiput(0,0)(40,0){3}{\line(0,1){80}}  \multiput(0,0)(0,40){3}{\line(1,0){80}}
       \multiput(0,80)(40,0){3}{\line(1,1){20}} \multiput(0,80)(10,10){3}{\line(1,0){80}}
       \multiput(80,0)(0,40){3}{\line(1,1){20}}  \multiput(80,0)(10,10){3}{\line(0,1){80}}
    \put(80,0){\line(-1,1){80}}\put(80,0){\line(1,5){20}}\put(80,80){\line(-3,1){60}}
       \multiput(40,0)(40,40){2}{\line(-1,1){40}} 
        \multiput(80,40)(10,-30){2}{\line(1,5){10}}
        \multiput(40,80)(50,10){2}{\line(-3,1){30}}
      \end{picture}}
    \put(220,0){\begin{picture}(110,110)(0,0) \put(25,102){Grid 3:}
       \multiput(0,0)(20,0){5}{\line(0,1){80}}  \multiput(0,0)(0,20){5}{\line(1,0){80}}
       \multiput(0,80)(20,0){5}{\line(1,1){20}} \multiput(0,80)(5,5){5}{\line(1,0){80}}
       \multiput(80,0)(0,20){5}{\line(1,1){20}}  \multiput(80,0)(5,5){5}{\line(0,1){80}}
    \put(80,0){\line(-1,1){80}}\put(80,0){\line(1,5){20}}\put(80,80){\line(-3,1){60}}
       \multiput(40,0)(40,40){2}{\line(-1,1){40}} 
        \multiput(80,40)(10,-30){2}{\line(1,5){10}}
        \multiput(40,80)(50,10){2}{\line(-3,1){30}}

       \multiput(20,0)(60,60){2}{\line(-1,1){20}}   \multiput(60,0)(20,20){2}{\line(-1,1){60}} 
        \multiput(80,60)(15,-45){2}{\line(1,5){5}} \multiput(80,20)(5,-15){2}{\line(1,5){15}}
        \multiput(20,80)(75,15){2}{\line(-3,1){15}}\multiput(60,80)(25,5){2}{\line(-3,1){45}}
      \end{picture}}

    \end{picture} 
    \end{center} 
\caption{ The first three grids for the computation 
    in Tables \ref{t-5}--\ref{t-6}.  } 
\label{grid4}
\end{figure}
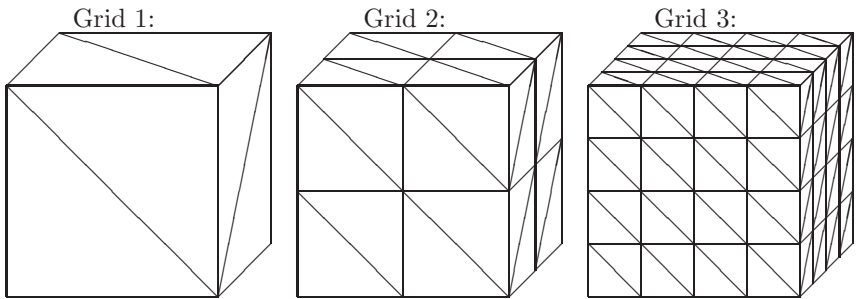

\begin{table}[H]
  \centering  \renewcommand{\arraystretch}{1.1}
  \caption{The error profile 
     for computing \eqref{e3} on Figure \ref{grid4} 
     tetrahedral meshes. }
  \label{t-5}
\begin{tabular}{c|cc|cc|cc}
\hline
Grid &  $\|\b u-\b u_0\|_0$  & $O(h^r)$ & $\3bar\b u-\b u_h\3bar$  & $O(h^r)$ &
      $\|p -p_0\|_0$  & $O(h^r)$    \\
\hline&\multicolumn{6}{c}{ By the $P_2$/$P_{1}$ WG finite element.}\\
\hline   
 1 &   0.115E-01 &0.00 &   0.377E+01 &0.00 &   0.137E-03 &0.00 \\
 2 &   0.326E-02 &1.82 &   0.137E+01 &1.46 &   0.442E-04 &1.63 \\
 3 &   0.575E-03 &2.50 &   0.507E+00 &1.44 &   0.571E-05 &2.95 \\
 4 &   0.112E-03 &2.36 &   0.226E+00 &1.16 &   0.730E-06 &2.97 \\
\hline
    \end{tabular}%
\end{table}%

\begin{table}[H]
  \centering  \renewcommand{\arraystretch}{1.1}
  \caption{The error profile 
     for computing \eqref{e3} on Figure \ref{grid4} 
     tetrahedral meshes. }
  \label{t-6}
\begin{tabular}{c|cc|cc|cc}
\hline
Grid &  $\|\b u-\b u_0\|_0$  & $O(h^r)$ & $\3bar\b u-\b u_h\3bar$  & $O(h^r)$ &
      $\|p -p_0\|_0$  & $O(h^r)$    \\
\hline&\multicolumn{6}{c}{ By the $P_3$/$P_{2}$ WG finite element.}\\
\hline    
 1 &   0.168E+00 &0.00 &   0.941E+02 &0.00 &   0.347E-02 &0.00 \\
 2 &   0.152E-01 &3.47 &   0.113E+02 &3.05 &   0.352E-03 &3.30 \\
 3 &   0.120E-02 &3.65 &   0.139E+01 &3.02 &   0.253E-04 &3.80 \\
\hline
    \end{tabular}%
\end{table}%

Finally, we compute the finite element solutions for \eqref{e3} on the
   non-convex polyhedral     grids shown in Figure \ref{grid3-2} by 
  the  $P_2$/$P_{1}$ WG finite element method.
The results are listed in Table \ref{t-7}. 
The optimal order of convergence is achieved for all solutions in all norms.

\begin{figure}[H] 
\begin{center}
 \setlength\unitlength{1pt}
    \begin{picture}(320,118)(0,3)
    \put(0,0){\begin{picture}(110,110)(0,0) \put(25,102){Grid 1:}
       \multiput(0,0)(80,0){2}{\line(0,1){80}}  \multiput(0,0)(0,80){2}{\line(1,0){80}}
       \multiput(0,80)(80,0){2}{\line(1,1){20}} \multiput(0,80)(20,20){2}{\line(1,0){80}}
       \multiput(80,0)(0,80){2}{\line(1,1){20}}  \multiput(80,0)(20,20){2}{\line(0,1){80}}

    \multiput(0,0)(2,2){10}{\circle*{1}}
    \multiput(60,40)(2,2){10}{\circle*{1}}
    \multiput(20,20)(0,2){40}{\circle*{1}}
        \multiput(20,20)(2,0){40}{\circle*{1}}
        
        \multiput(40,100)(2,-2){20}{\circle*{1}}
        
    \put(20,0){\line(1,1){40}}\put(20,80){\line(1,-1){40}}
    \put(20,80){\line(1,1){20}}
      \end{picture}}

    \put(110,0){\begin{picture}(110,110)(0,0)\put(25,102){Grid 2:}
       \multiput(0,0)(40,0){3}{\line(0,1){80}}  \multiput(0,0)(0,40){3}{\line(1,0){80}}
       \multiput(0,80)(40,0){3}{\line(1,1){20}} \multiput(0,80)(10,10){3}{\line(1,0){80}}
       \multiput(80,0)(0,40){3}{\line(1,1){20}}  \multiput(80,0)(10,10){3}{\line(0,1){80}}

    \multiput(10,0)(40,0){2}{\multiput(0,0)(0,40){2}{\line(1,1){20}}}
    \multiput(10,40)(40,0){2}{\multiput(0,0)(0,40){2}{\line(1,-1){20}}}
    \multiput(10,80)(40,0){2}{\line(1,1){20}}
      \end{picture}}
    \put(220,0){\begin{picture}(110,110)(0,0) \put(25,102){Grid 3:}
       \multiput(0,0)(20,0){5}{\line(0,1){80}}  \multiput(0,0)(0,20){5}{\line(1,0){80}}
       \multiput(0,80)(20,0){5}{\line(1,1){20}} \multiput(0,80)(5,5){5}{\line(1,0){80}}
       \multiput(80,0)(0,20){5}{\line(1,1){20}}  \multiput(80,0)(5,5){5}{\line(0,1){80}}

    \multiput(5,0)(20,0){4}{\multiput(0,0)(0,20){4}{\line(1,1){10}}}
    \multiput(5,20)(20,0){4}{\multiput(0,0)(0,20){4}{\line(1,-1){10}}}
    \multiput(5,80)(20,0){4}{\line(1,1){20}}
    
      \end{picture}}

    \end{picture} 
    \end{center} 
\caption{ The first three (non convex polyhedral) 
  grids for the computation 
    in Table  \ref{t-7}.  } 
\label{grid3-2}
\end{figure}
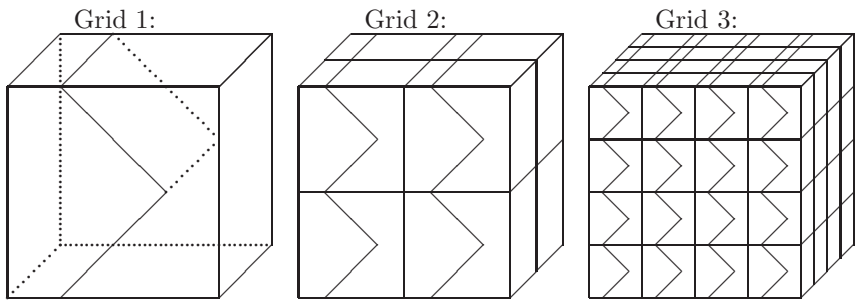

\begin{table}[H]
  \centering  \renewcommand{\arraystretch}{1.1}
  \caption{The error profile 
     for computing \eqref{e3} on Figure \ref{grid3-2} 
     non-convex polyhedral meshes. }
  \label{t-7}
\begin{tabular}{c|cc|cc|cc}
\hline
Grid &  $\|\b u-\b u_0\|_0$  & $O(h^r)$ & $\3bar\b u-\b u_h\3bar$  & $O(h^r)$ &
      $\|p -p_0\|_0$  & $O(h^r)$    \\
\hline&\multicolumn{6}{c}{ By the $P_2$/$P_{1}$ WG finite element.}\\
\hline    
 1 &   0.168E+00 &0.00 &   0.941E+02 &0.00 &   0.347E-02 &0.00 \\
 2 &   0.152E-01 &3.47 &   0.113E+02 &3.05 &   0.352E-03 &3.30 \\
 3 &   0.120E-02 &3.65 &   0.139E+01 &3.02 &   0.253E-04 &3.80 \\
\hline
    \end{tabular}%
\end{table}%

\bibliographystyle{abbrv}
\bibliography{Ref}

\end{document}